\newcolumntype{R}[2]{%
    >{\adjustbox{angle=#1,lap=\width-(#2)}\bgroup}%
    l%
    <{\egroup}%
}
\newcommand *\rot{\multicolumn{1}{R{45}{1em}}}% no optional argument here, please!
\newcommand{\cmark}{\ding{51}}%
\newcommand{\xmark}{\ding{55}}%
\newcommand{\rank}{\hbox{rank}}
\newcommand{\norm}[1]{\left\Vert#1\right\Vert}
\newcommand{\abs}[1]{\left\vert#1\right\vert}
\crefname{remark}{Remark}{Remarks}
\newcommand{\labitem}[2]{%
\def\@itemlabel{\textbf{#1}}
\item
\def\@currentlabel{#1}\label{#2}}
\title{Approximate 1-norm minimization and minimum-rank structured sparsity
for various  generalized inverses via local search\thanks{Submitted to the editors \today.
M. Fampa was supported in part by CNPq grant 303898/2016-0.
J. Lee was supported in part by ONR grant N00014-17-1-2296.}}
\author{Luze Xu\thanks{University of Michigan, Ann Arbor, MI, USA (\email{xuluze,jonxlee@umich.edu})}
\and Marcia Fampa\thanks{Universidade Federal do Rio de Janeiro
  (\email{fampa@cos.ufrj.br, gabrielponte@poli.ufrj.br})},
  \and Jon Lee$\strut^\dag$
  \and Gabriel Ponte$\strut^\ddag$}
\begin{document}
%%%%%%%%%%%%%%%%

\maketitle

\begin{abstract}
Fundamental in matrix algebra and its applications, a \emph{generalized inverse} of a real matrix $A$
is a matrix $H$  that satisfies the Moore-Penrose (M-P) property
$AHA=A$. If $H$ also satisfies the additional useful M-P property, $HAH=H$, it is
called a \emph{reflexive generalized inverse}. Reflexivity is equivalent to minimum rank, so
we are particularly interested in reflexive generalized inverses.
We consider aspects of symmetry related to
the calculation of a \emph{sparse} reflexive generalized inverse of $A$.
As is common, and following Lee and Fampa (2018) for calculating sparse generalized inverses,
we use (vector) 1-norm minimization for inducing sparsity and for keeping the magnitude
of entries under control.

When $A$ is symmetric, we may naturally desire a symmetric $H$; while generally such a restriction
on $H$ may not lead to a 1-norm minimizing reflexive generalized inverse.
We investigate a block construction method to produce a symmetric reflexive generalized inverse that is structured and has guaranteed sparsity.
%{\color{red}Letting  the rank of $A$ be $r$, we establish that the best such generalized inverse is a
%1-norm minimizing symmetric reflexive generalized inverse when (i) r=1 and when (ii) r=2 and A is nonnegative. Importantly, }
We provide a theoretically-efficient and practical local-search algorithm to block-construct an approximate 1-norm minimizing symmetric reflexive generalized inverse.

Another aspect of symmetry that we consider relates to another M-P property:
$H$ is \emph{ah-symmetric} if $AH$ is symmetric. The ah-symmetry property is
the key one for solving least-squares problems using $H$.
Here we do not assume that $A$ is symmetric, and we do not impose symmetry on $H$.
We investigate a column
block construction method to produce an ah-symmetric  reflexive generalized inverse
that is structured and has guaranteed sparsity.
%{\color{red}We establish that the best such ah-symmetric
%generalized inverse is a 1-norm minimizing ah-symmetric reflexive generalized inverse when (i) r=1 and
%when (ii) r=2 and A satisfies a technical condition.
%Importantly,}
We provide a theoretically-efficient and practical local-search algorithm to column block construct an approximate 1-norm minimizing ah-symmetric reflexive generalized inverse.
\end{abstract}

\begin{keywords}
generalized inverse;
%Moore-Penrose pseudoinverse;
%reflexive generalized inverse;
sparse optimization;
approximation algorithm
%local search
\end{keywords}

\begin{AMS}
Primary: 90C26,  %Nonconvex programming, global optimization
90C25;  %Convex programming
secondary: 15A09, %Matrix inversion, generalized inverses
65K05  %Mathematical programming methods
\end{AMS}

%%%%%%%%%%%%%%%%%%%%%%%%%%%%%%%%%%%%%%%%%%%%%%%%%%%%%%%%%%%%%%%%%%%%%%

\section{Introduction}\label{sec:intro}

Generalized inverses are essential tools in matrix algebra and its applications. In particular, the Moore-Penrose (M-P) pseudoinverse can be used to calculate the least-squares solution of an over-determined system of linear equations and the solution with minimum $2$-norm of an under-determined system of linear equations.
In both cases, if the system  is $Ax=b$, then a solution is
given by $x:=A^+b$, where $A^+$ is the M-P pseudoinverse.
Considering our motivating \emph{use case} of a very large (rank deficient) matrix $A$
and multiple right-hand sides $b$, we can  see the value of having at hand a sparse
generalized inverse.
We apply techniques of sparse optimization, aiming at balancing the tradeoff between properties of the M-P pseudoinverse and alternative sparser generalized inverses.
Recently, \cite{dokmanic,dokmanic1,dokmanic2} used sparse-optimization techniques to give tractable right and left sparse pseudoinverses. Particularly relevant to what we present here, \cite{FFL2016} (also see \cite{FFL2019}) derived and analyzed other tractable sparse generalized inverses based on relaxing some of the ``M-P properties''. \cite{FampaLee2018ORL} investigated one such kind of sparse generalized inverse, with particular interest in rank-deficient matrices;
these reduce to the sparse right (resp., left)  pseudoinverses in \cite{dokmanic,dokmanic1,dokmanic2}, when the matrix has full row (resp., column) rank.

In what follows, for succinctness, we use vector-norm notation on matrices: we write $\|H\|_1$ to mean $\|\mathrm{vec}(H)\|_1$, and $\|H\|_{\max}$ to mean $\|\mathrm{vec}(H)\|_{\max}$ (in both cases, these are not the usual induced/operator matrix norms). We use $I$ for an identity matrix and $J$ for an all-ones matrix. Matrix dot product is indicated by $\langle X, Y\rangle=\mathrm{trace}(X^\top Y):=\sum_{ij}x_{ij}y_{ij}$. We use $A[S,T]$ for the submatrix of $A$ with row indices $S$ and column indices $T$; additionally, we use $A[S,:]$ ( resp., $A[:,T]$) for the submatrix of $A$ formed by the rows $S$ (resp., columns $T$).
Finally, if $A$ is symmetric and $S=T$, we use $A[S]$ to represent the principal submatrix of $A$ with  row/column indices $S$.

When a real matrix $A\in\mathbb{R}^{m\times n}$ is not square or is square but not invertible, we consider ``pseudoinverses'' of $A$ (see \cite{rao1971}). The most well-known pseudoinverse is the \emph{M-P pseudoinverse}
%, independently discovered by A. Bjerhammar, E.H. Moore and R. Penrose
(see \cite{Bjerhammar,Moore,Penrose}). If $A=U\Sigma V^\top$ is the real singular-value decomposition of $A$ (see \cite{GVL1996}, for example), where $U\in\mathbb{R}^{m\times m}$, $V\in\mathbb{R}^{n\times n}$ are orthogonal matrices and $\Sigma=\mathrm{diag}(\sigma_1,\sigma_2,\dots,\sigma_p)\in\mathbb{R}^{m\times n}$ ($p=\min\{m,n\}$) with  singular values $\sigma_1\ge\sigma_2\ge\dots\ge\sigma_p\ge0$, then the M-P pseudoinverse of $A$ can be defined as $A^+:=V\Sigma^+U^\top$, where $\Sigma^+:=\mathrm{diag}(\sigma_1^+,\sigma_2^+,\dots,\sigma_p^+)\in\mathbb{R}^{n\times m}$, $\sigma_i^+:=1/\sigma_i$ for all $\sigma_i\ne 0$, and $\sigma_i^+:=0$ for all $\sigma_i=0$. The M-P pseudoinverse plays a very important role in matrix theory and is widely-used in practice.

Following \cite{FampaLee2018ORL}, we define different tractable sparse ``generalized inverses'', based on the following very-well-known fundamental characterization of the M-P pseudoinverse.

\begin{theorem}[see \cite{Penrose}]
For $A \in \mathbb{R}^{m \times n}$, the M-P pseudoinverse $A^+$ is the unique $H \in \mathbb{R}^{n \times m}$ satisfying:
	\begin{align}
		& AHA = A \label{property1} \tag{P1}\\
		& HAH = H \label{property2} \tag{P2}\\
		& (AH)^{\top} = AH \label{property3} \tag{P3}\\
		& (HA)^{\top} = HA \label{property4} \tag{P4}
	\end{align}
\end{theorem}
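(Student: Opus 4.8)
The plan is to split the statement into its two halves: \emph{existence}, that the SVD-defined matrix $A^+$ satisfies \eqref{property1}--\eqref{property4}, and \emph{uniqueness}, that any two matrices satisfying all four properties coincide (so in particular any such $H$ equals $A^+$).

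For existence, I would take $H:=A^+=V\Sigma^+U^\top$ from the real SVD $A=U\Sigma V^\top$ introduced above, and rely on three elementary facts about the (rectangular) diagonal matrices $\Sigma,\Sigma^+$: namely $\Sigma\Sigma^+\Sigma=\Sigma$ and $\Sigma^+\Sigma\Sigma^+=\Sigma^+$ (each nonzero $\sigma_i$ contributing $\sigma_i\sigma_i^+\sigma_i=\sigma_i$ and each zero contributing zero), together with the fact that the square matrices $\Sigma\Sigma^+$ and $\Sigma^+\Sigma$ are $0/1$-diagonal, hence symmetric. Substituting $A=U\Sigma V^\top$, $H=V\Sigma^+U^\top$ and cancelling with $U^\top U=I$, $V^\top V=I$, the left-hand sides of \eqref{property1}--\eqref{property4} reduce respectively to $U\Sigma V^\top=A$, $V\Sigma^+U^\top=H$, $U\Sigma\Sigma^+U^\top$, and $V\Sigma^+\Sigma V^\top$, the last two being visibly symmetric. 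This is a routine computation with no real obstruction.

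For uniqueness, I would assume that $G$ and $H$ each satisfy \eqref{property1}--\eqref{property4} and derive $G=H$ by a telescoping chain of substitutions. Starting from $G=GAG$ (which is \eqref{property2} for $G$), using \eqref{property3} to replace $AG$ by $(AG)^\top$, then \eqref{property1} for $H$ to rewrite the embedded $A^\top$ as $A^\top H^\top A^\top$, and \eqref{property3} once more to undo the transposes, one collapses the expression to $G=GAH$. A symmetric argument starting from $H=HAH$ and using \eqref{property2}, \eqref{property4} for $H$ and \eqref{property1} for $G$ yields $H=GAH$. Comparing these gives $G=GAH=H$; applying this with $H=A^+$ then finishes the theorem.

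I do not expect a genuine mathematical obstacle here — this is the classical Penrose argument — so the only thing demanding attention is careful bookkeeping in the uniqueness chain: invoking exactly the right M-P property at each step and tracking transposes consistently. I would therefore present the existence part compactly, one short display per property, and the uniqueness part as two brief, symmetric chains of equalities.
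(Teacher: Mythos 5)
Your proof is correct, but note that the paper itself offers no proof of this statement at all --- it is quoted as a classical result with a citation to Penrose, so there is nothing in the paper to compare against. What you give (existence by reducing each of \ref{property1}--\ref{property4} to the diagonal identities $\Sigma\Sigma^+\Sigma=\Sigma$, $\Sigma^+\Sigma\Sigma^+=\Sigma^+$ and the symmetry of the $0/1$-diagonal matrices $\Sigma\Sigma^+$, $\Sigma^+\Sigma$; uniqueness by the two telescoping chains $G=GAG=G(AG)^\top=GG^\top A^\top H^\top A^\top=G(AG)(AH)=GAH$ and $H=HAH=\cdots=GAH$) is exactly the standard Penrose argument, and both halves check out.
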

%So, the unique  $H$ satisfying  \ref{property1}$+$\ref{property2}$+$\ref{property3}$+$\ref{property4}
%is the \emph{M-P pseudoinverse}.
Following \cite{RohdeThesis}, a \emph{generalized inverse} is any $H$ satisfying  \ref{property1}. Because we are interested in sparse $H$, \ref{property1} is important to enforce, otherwise the completely sparse zero-matrix
(which carries no information from $A$) always satisfies
\ref{property2}$+$\ref{property3}$+$\ref{property4}.
A generalized inverse is \emph{reflexive} if it satisfies \ref{property2} (see \cite{RohdeThesis}). Theorem 3.14 in \cite{RohdeThesis} tells us two very useful facts: (i) if $H$ is a generalized inverse of $A$, then $\mathrm{rank}(H)\ge\mathrm{rank}(A)$, and (ii) a generalized inverse $H$ of $A$ is reflexive if and only if $\mathrm{rank}(H)=\mathrm{rank}(A)$. A low-rank $H$ can be viewed as being more interpretable/explainable model (say in the context of
the least-squares problem), so we naturally prefer reflexive generalized inverses (which have the least rank possible among generalized inverses).
As we have said, we are interested in
sparse generalized inverses. But structured sparsity of $H$ is even more valuable, as
it can be viewed, in a different way, as being a more interpretable/explainable model.
Later, we will expand on this point, but essentially we prefer nonzeros that are confined to
a  block of $H$ having limited size.

As a convenient mnemonic, if $H$ satisfies \ref{property3}, we say that $H$ is \emph{ah-symmetric}, and if $H$ satisfies \ref{property4}, we say that $H$  is \emph{ha-symmetric}. That is, ah-symmetric (resp., ha-symmetric)
means that $AH$ (resp., $HA$) is symmetric.

It is very important to know that not  all of the M-P properties are required for a generalized inverse to exactly solve key problems. For example, if $H$ is an ah-symmetric generalized inverse, then $\hat{x}:=Hb$ solves $\min\{\|Ax-b\|_2:~x\in\mathbb{R}^n\}$; if $H$ is a ha-symmetric generalized inverse, then $\hat{x}:=Hb$ solves $\min\{\|x\|_2:~Ax=b,~x\in\mathbb{R}^n\}$ (see \cite{FFL2016,campbell2009generalized}). This is an extremely important point for us,
which we come to in \S\ref{sec:AHsym}.

It is hard to find a generalized inverse (i.e., a solution of \ref{property1}) having the minimum number of nonzeros, subject to various subsets of $\{$\ref{property2}, \ref{property3}, \ref{property4}$\}$ (but not all of them). We let $\|H\|_0$ (resp., $\|x\|_0$)
be the number of nonzeros in the matrix $H$ (resp., vector $x$). \cite{dokmanic2} established that $\min\{\|H\|_0:~\ref{property1}\}$ is {\bf NP}-hard as follows: for full row-rank $A\in\mathbb{R}^{m\times n}$ ($m<n$), we have
$
\min\{\|H\|_0:~AHA=A\}=\min\{\|H\|_0:~AH=I\},
$
and computing a minimizer can be done column-wise, as a collection of sparse optimization problems $\min\{\|x\|_0:~Ax=e_i\}$. These latter sparse optimization problems are
known to be {\bf NP}-hard (see \cite{natarajan1995sparse}) for a general right-hand side $b\not=0$. But with $A$ having full row rank, we can reduce any  general right-hand side $b\not=0$, to a problem with $b=e_i$, by left-multiplying $A$ and $b$ by an appropriate
square and invertible matrix. Using the same idea, we can show the following
hardness result.
\begin{proposition}\label{prop:hard}
The following problems are {\bf NP}-hard:
\begin{align}
&\min\{\|H\|_0:~\ref{property1}+\ref{property2}\}; \label{sginv12} \tag{$SGI12$}\\
&\min\{\|H\|_0:~\ref{property1}+\ref{property3}\}; \label{sginv13} \tag{$SGI13$}\\
&\min\{\|H\|_0:~\ref{property1}+\ref{property2}+\ref{property3}\}; \label{sginv123} \tag{$SGI123$}\\
&\min\{\|H\|_0:~\ref{property1}+\ref{property4}\} \label{sginv14}; \tag{$SGI14$}\\
&\min\{\|H\|_0:~\ref{property1}+\ref{property2}+\ref{property4}\}. \label{sginv124} \tag{$SGI124$}
\end{align}
\end{proposition}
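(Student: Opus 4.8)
The plan is to mirror the reduction recalled just above for $\min\{\|H\|_0:~\ref{property1}\}$, the only new ingredient being the observation that, once \ref{property1} is imposed on a matrix of full row rank (a ``wide'' matrix) or of full column rank (a ``tall'' matrix), some of the remaining M-P properties hold for free --- so that each of the five problems coincides, over a suitable class of instances, with a problem already known to be {\bf NP}-hard.

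The first step is two elementary rank observations. If $A\in\mathbb{R}^{m\times n}$ has full row rank, then $AHA=A$ is equivalent to $AH=I_m$: ``$\Leftarrow$'' is immediate, and for ``$\Rightarrow$'' the identity $AHA=A$ gives $(AH-I_m)A=0$, which forces $AH-I_m=0$ because the columns of $A$ span $\mathbb{R}^m$. Given $AH=I_m$, one then has $HAH=H(AH)=H$, so \ref{property2} holds, and $(AH)^\top=I_m=AH$, so \ref{property3} holds. Dually, if $A$ has full column rank, then $AHA=A$ is equivalent to $HA=I_n$, and then $HAH=(HA)H=H$ gives \ref{property2} while $(HA)^\top=I_n=HA$ gives \ref{property4}. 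The second step assembles these. Over the class of full row-rank matrices $A$ (necessarily $m\le n$; the case $m=n$ makes $A$ invertible and the problem trivial, so the interesting instances have $m<n$), the feasible sets of \ref{sginv12}, \ref{sginv13}, and \ref{sginv123} all coincide with the feasible set of $\min\{\|H\|_0:~\ref{property1}\}$; and on this very class the latter problem is {\bf NP}-hard, which is exactly the result recalled above (via $AHA=A\iff AH=I$ and the column-wise reduction to the sparse-recovery problems $\min\{\|x\|_0:~Ax=e_i\}$, which are {\bf NP}-hard by \cite{natarajan1995sparse,dokmanic2}). Hence \ref{sginv12}, \ref{sginv13}, \ref{sginv123} are {\bf NP}-hard. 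Dually, over the class of full column-rank matrices $A$ (interesting case $n<m$), the feasible sets of \ref{sginv14} and \ref{sginv124} coincide with that of $\min\{\|H\|_0:~\ref{property1}\}$; and on this class $\min\{\|H\|_0:~\ref{property1}\}$ is {\bf NP}-hard because, using $\|H\|_0=\|H^\top\|_0$ together with $AHA=A\iff A^\top H^\top A^\top=A^\top$, it is literally the same problem as $\min\{\|G\|_0:~A^\top G A^\top=A^\top\}$ for the wide full row-rank matrix $A^\top$. Hence \ref{sginv14} and \ref{sginv124} are {\bf NP}-hard, which gives all five claims.

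The work is almost entirely in the two rank observations of the first step and in correctly pairing each subset of properties with the rank condition that trivializes the added properties: \ref{property3} with full row rank, \ref{property4} with full column rank, and \ref{property2} with either. I expect no real obstacle; the one point needing care is that the {\bf NP}-hardness of the base problem $\min\{\|H\|_0:~\ref{property1}\}$ must be invoked on the restricted matrix classes (full row rank with $m<n$, respectively full column rank with $n<m$), which is fine because that is precisely the form in which it was established, and the transpose identity transfers it to the full column-rank class at no cost.
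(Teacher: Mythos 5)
Your proposal is correct and takes essentially the same route as the paper: restrict to full row-rank (resp.\ full column-rank) $A$, note that \ref{property1} then forces $AH=I$ (resp.\ $HA=I$) so that \ref{property2} and \ref{property3} (resp.\ \ref{property4}) come for free, and invoke the known {\bf NP}-hardness of $\min\{\|H\|_0:AH=I\}$, with the transpose identity handling the column-rank cases. The only difference is that you spell out the elementary rank implications and the transpose transfer that the paper compresses into a ``similarly.''
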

\begin{proof}
For full row-rank $A\in\mathbb{R}^{m\times n}$ ($m<n$), we have $AHA=A~\Leftrightarrow AH=I$, and thus $HAH=H$ and $(AH)^\top=AH$ are also satisfied. Therefore, \eqref{sginv12}, \eqref{sginv13}, \eqref{sginv123} are all equivalent to $\min\{\|H\|_0:~AH=I\}$, which is {\bf NP}-hard. Similarly, with full column-rank  $A$, we have that \eqref{sginv14}, \eqref{sginv124} are {\bf NP}-hard.
%\hfill\Halmos
\end{proof}
\smallskip

We note that we have not been able to resolve the complexity of
\begin{align}
&\min\{\|H\|_0:~\ref{property1}+\ref{property3}+\ref{property4}\}. \label{sginv134} \tag{$SGI134$}
\end{align}

 Because of \cref{prop:hard}, we take the standard approach of minimizing $\|H\|_1$ to induce sparsity, subject to \ref{property1}  and various subsets of $\{$\ref{property2}, \ref{property3}, \ref{property4}$\}$ (but not all).

 It is a very important point that minimizing  $\|H\|_1$  (or any norm),
 serves to keep the entries of $H$ under control. This is very useful
 for applications, because it leads to more reasonable models
 (e.g., in the least-squares application) and with better numerics.
Minimizing $\|H\|_0$ does not have any such property (as $\|\cdot\|_0$
is not a norm). Indeed, the cost of $10^{-8}$ and $10^8$ are  the same
under $\|\cdot\|_0$; but we can effectively round entries on the order of $10^{-8}$
to 0 in $H$, while many entries  on the order of  $10^8$ in $H$ will lead to
unstable computations using $H$.
 It might seem that minimizing $\|H\|_{\max}$
would more naturally keep \emph{entries} of $H$ under control,
but there is a strong preference for  minimizing  $\|\cdot\|_1$
because it empirically induces sparsity, and it captures the lower envelope of $\|\cdot\|_0$ when the argument entries are in $[-1,1]$. Moreover, $\|H\|_{\max}$
sees no benefit for reducing entries of $H$ that are not largest.

Considering the tractability of minimizing  $\|H\|_1$,
we see that \ref{property1}, \ref{property3} and \ref{property4} are linear constraints, which are easy to handle, while \ref{property2} is a non-convex quadratic, hence rather nasty. But, as we have noted, \ref{property2} is
very useful for a generalized inverse, as it is equivalent to the rank of $H$ being equal to the rank of $A$.
Therefore, we are particularly interested in situations where,
without solving a mathematical-programming formulation via a generic method
(like LP or non-convex quadratically-constrained programming),
we can construct a minimizer or approximate minimizer  of $\|H\|_1$,
subject to  \ref{property1}, \ref{property2},
 and one or none of \ref{property3} and \ref{property4}.
 In fact, our methods will do this and more.
 Additionally, we will get \emph{structured sparsity} for $H$.

%$H$ that minimizes the  $1$-norm, subject to \ref{property1} and one or none of \ref{property3} and \ref{property4}, also satisfies \ref{property2} for free.
%In a similar vein, we are interested in situations where we can construct
%an approximate minimizer of $\|H\|_1$, subject to

\cite{FampaLee2018ORL} gave some results in this direction,
when neither \ref{property3} nor  \ref{property4} is enforced.
%to the situation where a minimizing (vector) $1$-norm generalized inverse $H$ is also reflexive.
In particular, \cite{FampaLee2018ORL} gave a ``block construction'' of a generalized inverse $H$ of rank-$r$ $A$ that is always reflexive, is ``somewhat-sparse'', having at most $r^2$ nonzeros and all confined to a choice of $r$ rows and $r$ columns (hence, structured). We note that any generalized inverse of $A$ must have at least $r$ nonzeros (because its rank is always at least $r$). Therefore, for any choice of
block, the construction of \cite{FampaLee2018ORL} has the number of nonzeros
within a factor of $r$ of the minimum number of nonzeros.

\cite{FampaLee2018ORL} also demonstrated that  there exists an easy-to-find block construction of a  $1$-norm
minimizing reflexive generalized inverse, for rank-$1$ matrices and rank-$2$ nonnegative matrices. Finally, for general rank-$r$ matrices, \cite{FampaLee2018ORL} gave an efficient local-search based approximation algorithm, that efficiently finds a generalized inverse following the block construction, and that has its $1$-norm within a factor of (almost) $r^2$ of the minimum $1$-norm of any generalized inverse. In fact, experimentally, we see much better performance for the local search than this guarantee (see \cite{FLPX}), while we establish here that
the guarantee of the local search is best possible; see \S\ref{sec:examples}.

In what follows, we follow two directions. One direction aims at finding a sparse \emph{symmetric} reflexive generalized inverse $H$ for a symmetric matrix $A$.
Because the M-P pseudoinverse of a symmetric matrix is also symmetric, it is natural to ask for a symmetric reflexive generalized inverse.
\cite[Section 3.3]{RohdeThesis} demonstrates that if $A$ is symmetric, then it is not necessarily the case that a reflexive generalized inverse is symmetric;
but there always does exist a symmetric reflexive generalized inverse (e.g., the M-P pseudoinverse). \Cref{prop:symnphard} below establishes that for a symmetric matrix $A$, finding a symmetric generalized inverse with minimum number of nonzeros is {\bf NP}-hard. So we aim at
 construction of a symmetric reflexive generalized inverse with minimum (or approximately minimum) 1-norm.
\begin{proposition}\label{prop:symnphard}
For symmetric matrix $A$, the following problem is {\bf NP}-hard.
\begin{equation}
\min\{\|H\|_0:~\ref{property1},~H^\top=H\} \label{symsginv} \tag{$symSGI$}
\end{equation}
\end{proposition}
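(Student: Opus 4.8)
The plan is to reduce from the {\bf NP}-hard problem $\min\{\|H\|_0:~AHA=A\}$ for a \emph{full row-rank} matrix, which (as recalled in the proof of \cref{prop:hard}) is equivalent to $\min\{\|H\|_0:~AH=I\}$ and is known to be {\bf NP}-hard via the column-wise sparse-recovery problems $\min\{\|x\|_0:~Ax=e_i\}$. Given an instance $B\in\mathbb{R}^{m\times n}$ with $m<n$ and $\rank(B)=m$, I would build the symmetric block matrix
\begin{equation*}
A:=\begin{pmatrix} 0 & B\\ B^\top & 0\end{pmatrix}\in\mathbb{R}^{(m+n)\times(m+n)},
\end{equation*}
and show that solving \eqref{symsginv} for this $A$ recovers an optimal solution of $\min\{\|H\|_0:~BH'=I\}$, up to a constant factor in the nonzero count. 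The point of the anti-diagonal embedding is that it is the standard device that turns a rectangular $B$ into a symmetric matrix whose generalized inverses are built out of generalized inverses of $B$ and $B^\top$: one checks directly that if $G$ is a (right) generalized inverse of $B$, i.e. $BG=I_m$, then $H:=\begin{pmatrix}0 & G^{\ast\top}\\ G & 0\end{pmatrix}$-type block matrices satisfy $AHA=A$, and conversely a symmetric generalized inverse of $A$, written in $2\times2$ block form conformally with $A$, forces its blocks to be generalized inverses of $B$ and $B^\top$ of controlled support.

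Concretely, the key steps are: (1) Write an arbitrary symmetric $H=\begin{pmatrix} P & Q\\ Q^\top & R\end{pmatrix}$ with $P=P^\top\in\mathbb{R}^{m\times m}$, $R=R^\top\in\mathbb{R}^{n\times n}$, $Q\in\mathbb{R}^{m\times n}$. Expand $AHA$ in block form: the $(1,1)$ block is $BQ^\top B^\top$, the $(2,2)$ block is $B^\top Q B$, and the off-diagonal block is $BRB^\top$ (using $m<n$ and $\rank B=m$). Equating with $A$ yields $BQ^\top B^\top=0$, $B^\top Q B = 0$ along the diagonal and $BRB^\top = B$. Since $B$ has full row rank $m$, $BRB^\top = B$ means $RB^\top$ is a right inverse of $B$; so the support of $R$ already dominates that of a right inverse. (2) Conversely, given any right inverse $G$ of $B$ (so $BG=I$), set $R:=GG^\top$-free choice, or more simply observe one can take $Q=0$, $P=0$ and $R$ any symmetric matrix with $BRB^\top=B$; a canonical choice is $R := G G^\top$ is wrong in general, so instead take $R:=\tfrac12(G B^{+} {}^{\top}\!+\!\cdots)$ — rather, use $R:=G(G^\top)$ only when symmetric; the clean statement is that $BRB^\top=B$ with $R$ symmetric has a solution whose support is within a factor of $2$ of an optimal right inverse's (symmetrize: if $BG=I$ then $R=\tfrac12(GG_2^\top+\cdots)$). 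I would make this precise by: from optimal $G^\star$ for $\min\|G\|_0$ s.t.\ $BG=I$, form $R^\star:=G^\star (B B^\top)^{-1} B (G^\star)^\top$-type symmetric completion, bounding $\|R^\star\|_0\le c\,\|G^\star\|_0^2$ or by a sharper argument $\le 2\|G^\star\|_0$; and conversely from optimal symmetric $H^\star$ for \eqref{symsginv}, its block $R^\star$ gives $G:=R^\star B^\top (BB^\top)^{-1}$ a right inverse with $\|G\|_0\le \|R^\star\|_0 \cdot \poly$. Either polynomial relation preserves {\bf NP}-hardness because the original problem is {\bf NP}-hard to approximate within any constant factor in many regimes; if only exact hardness is available I would instead make the reduction support-preserving exactly by choosing $B$ so that $BB^\top=I_m$ (orthonormal rows), in which case $BRB^\top=B$ iff $R B^\top = B^\top$ iff the columns of $B^\top$ are fixed by $R$, and a minimal-support symmetric such $R$ is exactly $B^\top B$ only if that is minimal — here I would instead pad and argue directly that minimal-nonzero symmetric generalized inverses of $A$ restrict to minimal-nonzero right inverses of $B$.

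The main obstacle I anticipate is precisely this bookkeeping: going from "$R$ symmetric with $BRB^\top=B$" to "a right inverse $G$ of $B$" and back, while controlling the number of nonzeros exactly (not just up to a polynomial factor), because symmetrization can a priori square the support. I expect the cleanest fix — and the one I would pursue first — is to reduce from the version where $B$ has orthonormal rows (WLOG, since multiplying $B$ on the left by an invertible matrix is the reduction already used in \cref{prop:hard}, and one can further arrange $BB^\top=I$ by such a left multiplication), so that the constraint collapses to $RB^\top=B^\top$, i.e. $R$ acts as the identity on the row space of $B$; then a minimum-support symmetric $R$ is obtained by extending an arbitrary (not necessarily symmetric) minimum-support right-inverse action and symmetrizing \emph{on the orthogonal complement where $R$ is unconstrained}, which does not increase support beyond a controlled amount, and in the other direction the block $R$ of any symmetric generalized inverse of $A$ directly yields $G:=RB^\top$ with $BG=BRB^\top=I$ and $\|G\|_0\le \|R\|_0\cdot\max_j\|B^\top e_j\|_0$; choosing $B$ with sparse rows in the hard instances then makes this factor a fixed constant, so {\bf NP}-hardness transfers. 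Finally I would note that the constructed $H$ in the forward direction can be taken to additionally satisfy \ref{property2} (reflexive), so the hardness persists even restricted to symmetric \emph{reflexive} generalized inverses, matching the paper's standing interest.
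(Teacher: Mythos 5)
Your choice of gadget --- the anti-diagonal symmetric embedding of $B$ --- is exactly the paper's, but your block expansion of $AHA$ is wrong, and that error is what sends you into the unresolved bookkeeping at the end. With
\[
A=\begin{bmatrix} 0 & B\\ B^\top & 0\end{bmatrix},\qquad
H=\begin{bmatrix} P & Q\\ Q^\top & R\end{bmatrix}
\]
conformal to $A$, a direct computation gives
\[
AHA=\begin{bmatrix} BRB^\top & BQ^\top B\\ B^\top Q B^\top & B^\top P B\end{bmatrix},
\]
not the expression you wrote; indeed your claimed $(1,1)$ block $BQ^\top B^\top$ is not even conformable ($BQ^\top$ is $m\times m$ while $B^\top$ is $n\times m$). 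Equating with $A$ therefore puts the generalized-inverse constraint $BQ^\top B=B$ on the \emph{off-diagonal} blocks and only the homogeneous constraints $BRB^\top=0$, $B^\top PB=0$ on the diagonal --- the reverse of what you assert. This matters enormously: the block $Q$ carrying the real constraint is \emph{not} individually required to be symmetric (symmetry of $H$ only ties $Q$ to $Q^\top$, and $BQ^\top B=B$ is automatically equivalent to its own transpose), while the blocks that symmetry does constrain, $P$ and $R$, can simply be set to zero. Since the objective is $\|P\|_0+\|R\|_0+2\|Q\|_0$, optimal solutions have $P=R=0$ and $Q^\top$ an optimal solution of the {\bf NP}-hard problem $\min\{\|G\|_0:\ BGB=B\}$; the reduction is immediate and no support accounting is needed. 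That is precisely the paper's proof.

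Because you instead placed $BRB^\top=B$ on the symmetric diagonal block $R$, you manufactured the problem of relating minimum-support \emph{symmetric} solutions of $BRB^\top=B$ to minimum-support right inverses of $B$. You correctly flag this as the main obstacle, but none of your proposed repairs closes it: symmetrization can inflate the support (you concede this); the bound $\|G\|_0\le\|R\|_0\cdot\max_j\|B^\top e_j\|_0$ requires the hard sparse-recovery instances to have uniformly sparse rows, which you cannot assume; and the appeal to constant-factor inapproximability is not something the cited hardness result supplies. As written, the proposal is not a proof. The fix is simply to redo the block multiplication --- the correct expansion dissolves the obstacle entirely.
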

\begin{proof}
We reduce $\min\{\|H\|_0:~\ref{property1}\}$ to an instance of \eqref{symsginv} as follows. Let
$$
\bar{A}:=\begin{bmatrix}
0 & A\\
A^\top & 0
\end{bmatrix}
\mbox{ and }
H := \begin{bmatrix}
X & Z^\top\\
Z & Y
\end{bmatrix};
\mbox{ then }
\bar{A}H\bar{A} = \begin{bmatrix}
AYA^\top & AZA\\
A^\top Z^\top A^\top & A^\top X A{}
\end{bmatrix}.
$$
Thus $\bar{A}$ is symmetric, and \eqref{symsginv} for $\bar{A}$ is equivalent to
$$
\min\{\|X\|_0+\|Y\|_0+2\|Z\|_0:~A^\top X A=0, AYA^\top = 0, AZA=A, X^\top=X, Y^\top=Y\}.
$$
Clearly, the optimal solutions of $\eqref{symsginv}$ for $\bar{A}$ all have $X=0$, $Y=0$, and $(X=0,~ Y=0,~ Z)$ is optimal to $\eqref{symsginv}$ for $\bar{A}$ if and only if $Z$ is optimal to $\min\{\|H\|_0:~AHA=A\}$; thus \eqref{symsginv} is {\bf NP}-hard.
%%\hfill\Halmos
\end{proof}
\smallskip
Unfortunately, we do not know the complexity of $\min\{\|H\|_0:~\ref{property1}+\ref{property2},~H^\top=H\}$.

Our second direction aims at finding sparse ah-symmetric (or ha-symmetric) reflexive generalized inverses.
% enforcing either \ref{property3} or \ref{property4}; i.e., enforcing that
%$H$ be ah-symmetric or ha-symmetric.
% i.e., when an ah-symmetric or ha-symmetric generalized inverse $H$ with minimum 1-norm is also reflexive.
Note that if $A$ is symmetric, and we require that $H$ is a symmetric ah-symmetric (or ha-symmetric) reflexive generalized inverse,
%, and we enforce \ref{property1}$+$\ref{property2}$+$\ref{property3},
then $H$ is already the M-P pseudoinverse (see \cite{RohdeThesis}).
 Therefore, there
is no interest in enforcing symmetry on $H$ in this context. \Cref{prop:hard} (\ref{sginv123}, \ref{sginv124}) establishes that finding an ah-symmetric (or ha-symmetric) reflexive generalized inverse with minimum number of nonzeros is {\bf NP}-hard even for the full row (or column) rank matrix $A$. So we aim at construction of an ah-symmetric (or ha-symmetric) reflexive generalized inverse with minimum (or approximately minimum) 1-norm. Unlike the symmetric case, a 1-norm minimizing ah-symmetric (or ha-symmetric) reflexive generalized inverse can be obtained by  recasting the problem as a linear-optimization problem. However, the block construction method can  be generalized to give an ah-symmetric (or ha-symmetric) reflexive generalized inverse with a better guaranteed sparsity in terms of the number of nonzeros.
%We aim to investigate the quality of the generalized inverse obtained via block construction in the measure of $\|\cdot\|_1$.

% In what follows, we develop new block constructions and approximation algorithms for reflexive generalized inverses $H$ aimed at  $1$-norm  minimization of $H$.
% We consider three cases: (i) $A$ is symmetric, and we require that $H$ is symmetric;
% (ii) $A$ is general, and we require that $H$ is ah-symmetric;
% (iii) $A$ is general, and we require that $H$ is ha-symmetric.

In \S\ref{sec:sym}, we consider the situation where $A$ is symmetric.
%{\color{red}, and we show that the same block construction method from \cite{FampaLee2018ORL}, but over only the principal
%submatrices, gives  us a symmetric reflexive generalized inverse with minimum  $1$-norm for rank-$1$ matrices and rank-$2$ nonnegative matrices.}
%Interestingly, our proof for rank-$2$ non-negative matrices is
%quite different than the proof of the corresponding result in \cite{FampaLee2018ORL}.
We
%{\color{red}also}
give a local-search based (almost) $r^2$-approximation algorithm for finding a
$1$-norm minimizing symmetric reflexive generalized inverse.
Along the way, we repair a proof of a key result from \cite{FampaLee2018ORL}, concerning the correctness of the approximation algorithm.
In \S\ref{sec:AHsym},
%{\color{red} we demonstrate that a generalization of the block construction method gives  us a  $1$-norm minimizing ah-symmetric
%reflexive generalized inverse, for the case of rank-$1$ $A$ and for the case of rank-$2$ $A$ under a technical condition. Also}
we provide a local-search based (almost) $r$-approximation algorithm for general rank $r$. With an observation of the connection between ah-symmetric (reflexive) generalized inverses and ha-symmetric (reflexive) generalized inverses, we can easily extend all the results in \S\ref{sec:AHsym} to the ha-symmetric case.
% In \S\ref{sec:HAsym}, we observe a connection between ah-symmetric (reflexive) generalized inverses and ha-symmetric (reflexive) generalized inverses, which extends all the results in \S\ref{sec:AHsym} to the ha-symmetric case.
In \S\ref{sec:numres}, we present results of numerical experiments aimed at illustrating our results and confirming their applicability.
Finally, in \S\ref{sec:conc}, we make some brief concluding remarks.
% ; that is,
% minimizing the  $1$-norm of $H$ over
% ha-symmetric reflexive generalized inverses.
In the Appendix, we demonstrate that the approximation ratios of
all of the local searches that we discuss are essentially tight.
Furthermore,  we investigate a more obvious local search
than the one we give (based directly on swaps seeking improvement in $\|H\|_1$),
and we establish some of its good and bad properties.

Before presenting our main results,
we note that it is useful to consider relaxing \ref{property2} completely, arriving at
 $\min\{\|H\|_1: \ref{property1}\}=\min\{\norm{H}_1: AHA=A\}$, which we re-cast as a linear-optimization problem \eqref{eqn:P} and its dual \eqref{eqn:D}:
\begin{equation*}\label{eqn:P}\tag{P}
\begin{array}{ll}
\mbox{minimize }& \langle J, H^+\rangle + \langle J, H^-\rangle\\
\mbox{subject to} & A(H^+ - H^-)A=A,\\
& H^+,H^-\ge 0;
\end{array}
\end{equation*}
\begin{equation*}\label{eqn:D}\tag{D}
\begin{array}{ll}
\mbox{maximize }& \langle A, W\rangle\\
\mbox{subject to} & -J\le A^\top W A^\top\le J.\\
\end{array}
\end{equation*}
More compactly, we can recast \eqref{eqn:D} as: $\max\{\langle A,W\rangle:~\|A^\top WA^\top\|_{\max}\le 1\}$.
In what follows, our approach is always to construct a feasible solution to \eqref{eqn:P} such that $H:=H^+ - H^-$
satisfies \ref{property2}, and measure the quality of the solution to \eqref{eqn:P}  against
a feasible solution that we construct for \eqref{eqn:D}.

\section{Symmetric results}\label{sec:sym}
We note that considerable effort has been made for tuning
hardware to efficiently handle sparse symmetric ``matrix-vector
multiplication'' (e.g., see \cite{Gkountouvas} and the references therein). Considering that virtually
any use of a generalized inverse $H$ would involve matrix-vector
multiplication, it can be very useful to prepare a sparse
symmetric generalized inverse $H$ from a symmetric $A$.

In this section, we assume that $A\in\mathbb{R}^{n\times n}$ is symmetric, and we seek to obtain an optimal solution to $\min\{\|H\|_1: \ref{property1}+\ref{property2},~ H^\top=H\}$.
%which is also a symmetric reflexive generalized inverse.
Using \cite{FampaLee2018ORL}, we could first seek a $1$-norm minimizing reflexive generalized inverse $H$ of $A$ that is not necessarily symmetric.
If $H$ is not symmetric, then the natural symmetrization $(H+H^\top)/2$ is a symmetric generalized inverse with minimum $1$-norm, because doing this symmetrization cannot increase the convex function $\|\cdot\|_1$. However,  symmetrization is very likely to increase the rank and thus violate \ref{property2}.
Also, we next demonstrate that the extreme solutions of $\min\{\|H\|_1: \ref{property1},~ H^\top=H\}$ only have a guaranteed (sharp) bound of %$r^2+\frac12(n-r)(n+r-1)$
$r^2+r$
for the number of nonzeros, while the extreme solutions of $\min\{\|H\|_1: \ref{property1}\}$ have at most $r^2$ nonzeros.

% \begin{proposition}~Suppose that $A\in\mathbb{R}^{n\times n}$ is symmetric. \par \label{prop:basicguarantee_sym}
% \begin{enumerate}[label=(\arabic*)]
% \item Extreme solutions of the LP for $\min\{\|H\|_1: \ref{property1}\}$ have $\leq r^2$ nonzeros. \label{prop:basic1}
% \item Extreme solutions of the LP for $\min\{\|H\|_1: \ref{property1},~H^\top = H\}$ have  $\leq r^2+\frac12(n-r)(n+r-1)$ nonzeros.
%     \label{prop:basic1sym}
% \end{enumerate}
% \end{proposition}
\begin{proposition}~Suppose that $A\in\mathbb{R}^{n\times n}$ is symmetric and has rank $r$.
 \par \label{prop:basicguarantee_sym}
\begin{enumerate}[label=(\arabic*)]
\item Extreme solutions of the LP for $\min\{\|H\|_1: \ref{property1}\}$ have at most $r^2$ nonzeros. Furthermore, the bound  is sharp for all  $n\geq r\geq 1$.
     \label{prop:basic1}
\item Extreme solutions of the LP for $\min\{\|H\|_1: \ref{property1},~H^\top = H\}$ have  at most $r^2+r$ nonzeros. Furthermore, the bound is sharp %when $n\ge r+2$ and $r\ge 3$.
    for $n-2\geq r \geq 3$.
    \label{prop:basic1sym}
\end{enumerate}
\end{proposition}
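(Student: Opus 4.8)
The plan is to handle the two parts with the same underlying linear-algebra bookkeeping: an extreme point of a polyhedron $\{x : Mx = b,\ x \geq 0\}$ in the variables $(H^+, H^-)$ has support size bounded by the number of linearly independent active equality constraints, so the whole game is to count the rank of the constraint system defining each LP and then to exhibit a matched family of examples showing the count cannot be improved.

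For \ref{prop:basic1}, I would first write the LP in the $(H^+, H^-)$ variables as in \eqref{eqn:P}: the constraint is $A(H^+ - H^-)A = A$, which is $r^2$ linearly independent scalar equations (the map $X \mapsto AXA$ has rank exactly $r^2$, since $A = U\Sigma V^\top$ of rank $r$ reduces $AXA$ to the top-left $r\times r$ block of $\Sigma^{-1}$-conjugates, and this block ranges freely). Complementary slackness forces $H^+_{ij} H^-_{ij} = 0$, so at an extreme point the number of strictly positive coordinates among $(H^+,H^-)$ is at most the number of active independent constraints, which is at most $r^2$; thus $\|H\|_0 \leq r^2$. For sharpness I would take $A = I_r$ padded with zeros to size $n$, whose unique reflexive generalized inverse respecting the relevant block has all $r^2$ entries of an $r\times r$ block nonzero — for instance a generic rank-$r$ ``block'' construction as in \cite{FampaLee2018ORL}; one checks it is an extreme point because the $r^2$ constraints are active and independent there.

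For \ref{prop:basic1sym}, the added constraint $H^\top = H$ is $\binom{n}{2}$ equations, but on the image they interact with $A(H^+-H^-)A = A$: I would argue that the combined system has rank $r^2 + \binom{r+1}{2} + \binom{r}{2}$... more cleanly, parametrize a symmetric $H$ by its $\binom{n+1}{2}$ independent entries and note the constraint $AHA = A$ restricted to symmetric $H$ is $\binom{r+1}{2}$ independent equations (the symmetric part of the rank-$r^2$ map has rank $\binom{r+1}{2}$). An extreme point of the symmetric LP then has at most $\binom{r+1}{2} = \frac{r^2+r}{2}$ nonzero \emph{independent} entries, but because each off-diagonal nonzero is mirrored, the total nonzero count $\|H\|_0$ is at most $r^2 + r$ (roughly: $d$ diagonal plus $o$ off-diagonal-pairs with $d + o \leq \frac{r^2+r}{2}$ gives $\|H\|_0 = d + 2o \leq r^2 + r$, maximized when $d$ is as large as possible is actually the wrong direction — I'd need $o$ as large as possible, $d$ as small, subject to feasibility, so the bound is $\|H\|_0 \le 2\cdot\frac{r^2+r}{2} = r^2+r$). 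For sharpness in the range $n - 2 \geq r \geq 3$ I would construct, following the block-construction philosophy, a symmetric rank-$r$ $A$ together with an extreme symmetric reflexive generalized inverse whose support is an $r \times r$ block minus enough structure to hit exactly $r^2 + r$; the constraints $n \geq r+2$ and $r \geq 3$ should be exactly what is needed for the gadget (two extra coordinates to place the block off the ``corner'', and $r \geq 3$ so the off-diagonal-heavy configuration is simultaneously feasible and extreme).

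The main obstacle I expect is the sharpness direction of part \ref{prop:basic1sym}: proving the upper bound is routine constraint-counting once the rank of the symmetrized map $X \mapsto (AXA - A)$ on symmetric matrices is pinned down, but building an explicit symmetric $A$ whose \emph{extreme} (vertex of the LP, not merely feasible) symmetric reflexive generalized inverse attains $r^2 + r$ nonzeros — and verifying both reflexivity and extremality — will require a careful choice. I would look for a construction where $A[S]$ is a nonsingular $r\times r$ principal submatrix whose inverse is fully dense, so that the symmetric ``completion'' forced by \ref{property1} on the block $S$ contributes $r^2$ entries, and then perturb in the two spare coordinates to pick up $r$ additional diagonal (or a row/column's worth of) nonzeros while keeping the LP constraints active and independent; checking that no feasible direction preserves all active constraints is the delicate verification, and this is presumably where the hypotheses $n-2 \ge r \ge 3$ are consumed.
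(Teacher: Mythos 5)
The upper-bound halves of both parts follow essentially the paper's route: recast each LP in split variables, bound the support of an extreme point by the rank of the equality system ($\rank(A\otimes A)=r^2$ for part (1); rank $\tfrac12 r(r+1)$ for the symmetrized map in part (2), which the paper takes from the theory of the symmetric Kronecker product), and double for mirrored off-diagonal entries to get $r^2+r$. A small quibble: the support bound at an extreme point is the basic-feasible-solution count, not a consequence of complementary slackness, but your conclusion is the right one.

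The sharpness claims are where the proposal has genuine gaps. For part (1), the example $A=\mathrm{diag}(I_r,0)$ fails: the constraint $AHA=A$ only forces the leading $r\times r$ block of $H$ to equal $I_r$, so the $1$-norm-minimizing extreme solution is $\mathrm{diag}(I_r,0)$ with $r$ nonzeros, not $r^2$. To force $r^2$ nonzeros you must make the constrained block of $H$ equal to a \emph{dense} matrix; the paper does this by placing $\hat A^{-1}$ in the corner for a generic dense symmetric $\hat A$, so that the unique optimal block is the dense $\hat A$ itself. For part (2), the sharpness direction is the substantive content of the proposition and you have only a plan, not a construction. Note also that your intended support (``an $r\times r$ block minus enough structure'') cannot reach $r^2+r>r^2$ nonzeros: the support must spill outside any $r\times r$ block, which is precisely why the hypothesis $n\ge r+2$ is needed. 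The paper's example is an explicit $(r+2)\times(r+2)$ gadget: $H_0=I_r-J_r$ contributes $r^2-r$ off-diagonal nonzeros, two bordering columns $Y$ contribute the remaining $2r$, and optimality plus uniqueness of the extreme solution are certified by an explicit dual matrix $W$ with $\|AWA\|_{\max}\le1$ and entries equal to $\pm1$ exactly on the support of $H$, via weak duality and complementary slackness. Producing such a certificate and verifying the entrywise bounds (this is where $r\ge3$ is consumed) is the delicate step that the proposal defers.
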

\begin{proof}
First, we claim that if $\rank(B)=p$, then the extreme solutions of
the LP associated with
\[
\min\{\|c \circ x\|_1:~ x\in\mathbb{R}^n,~ Bx=b\}
\]
have at most $p$ nonzeros, where $\circ$ is the element-wise product. By reformulating the problem as the LP
%$\min\{\mathbf{1}^\top (x^++x^-):~[B -B]\left[\begin{smallmatrix}x^+\\ x^-\end{smallmatrix}\right]=b,~x^+,x^-\ge 0\}$,
\[
\min\{|c|^\top (x^++x^-):~Bx^+ -Bx^- =b,~x^+,x^-\ge 0\},
\]
we see that the extreme solutions $(x^+,~x^-)$  have at least $2n-p$ zeros because there are only $p$ linearly-independent equations, which implies that $x:=x^+-x^-$ has at most $p$ nonzeros.

(1) Then we have
%\begin{align*}
$\min\{\|H\|_1: \ref{property1}\}=\min\{\|\mathrm{vec}(H)\|_1: ~(A\otimes A)\mathrm{vec}(H)=\mathrm{vec}(A)\}$,
%\end{align*}
with $\rank(A\otimes A)=\rank(A)^2=r^2$.  To see that the bound is sharp, let $\hat{A}$ be a random $r\times r$ symmetric matrix (with iid entries taken from any absolutely continuous density), and then take
$A$ to be all zero except for $\hat{A}^+$ in the north-west corner.
Then with probability one: $\hat{A}$ is dense, $\hat{A}$ has rank $r$ (and then $\hat{A}^+=\hat{A}^{-1}$), and $A$ has a unique generalized inverse which is the M-P pseudoinverse $A^+$, which is all zero except for the dense $r\times r$ block $\hat{A}$ in the north-west corner. Thus \ref{prop:basic1} holds.

(2) $\min\{\|H\|_1: \ref{property1},~H^\top = H\}$ is equivalent to an LP on the variable $\mathrm{svec}(H):$
\begin{equation}\label{eqn:PS}\tag{\text{$P_{sym}$}}
\min\{\|\mathrm{svec}(J)\circ\mathrm{svec}(H)\|_1: ~(A\otimes_S A)\mathrm{svec}(H)=\mathrm{svec}(A)\},
\end{equation}
where $\otimes_S$ is the symmetric Kronecker product, and for any symmetric matrix $S$, $\mathrm{svec}(S)\in \mathbb{R}^{\frac12n(n+1)}$ is defined as
$$\mathrm{svec}(S) := (s_{11},\sqrt{2}s_{21},\cdots,\sqrt{2}s_{n1},s_{22},\sqrt{2}s_{32},\cdots,\sqrt{2}s_{n2},\cdots,s_{nn})^\top;$$
that is, we stack the columns of $S$ from the main diagonal downwards, but multiplying off-diagonal entries by $\sqrt{2}$
(see \cite{schacke2004kronecker} for details). By \cite[Theorem 3.6]{schacke2004kronecker}, we have that $A\otimes_S A$ has $\frac12 r(r+1)$ nonzero eigenvalues, thus $\rank(A\otimes_S A)=\frac12r(r+1)$. We know that $\mathrm{svec}(H)$ has at most $\frac{r^2+r}{2}$ nonzeros. Therefore, $H$ has at most $2$ times the nonzeros of $\mathrm{svec}(H)$, thus \ref{prop:basic1sym} holds.

Next we  construct a family of examples to show that the bound  is sharp for $n-2=r\geq 3$. Then for any $n-2\geq r$, we can take $A$ to be all zero except for an $(r+2)\times(r+2)$ block in the north-west corner. The dual of \eqref{eqn:PS} is
\begin{equation}\label{eqn:DS}\tag{\text{$D_{sym}$}}
    \max\{\mathrm{svec}(A)^\top \mathrm{svec}(W):~\|(A\otimes_S A) \mathrm{svec}(W)\|_{\max}\le \mathrm{svec}(J)\}.
\end{equation}
We could also view \eqref{eqn:DS} as $\max\{\langle A,W\rangle: ~ \|AWA\|_{\max}\le 1,~W^\top=W\}$.

Let
$X = \begin{bmatrix}
    \frac{r}{2(r-1)} & -\frac{r(r-2)}{2(r-1)^2}\\
    \frac{1}{2(r-1)}\mathbf{1}_{r-1} & \frac{r}{2(r-1)^2}\mathbf{1}_{r-1}
\end{bmatrix}\in\mathbb{R}^{r\times 2}$,
$Y = \begin{bmatrix}
    0 & -1\\
    -\mathbf{1}_{r-1} & \mathbf{0}_{r-1}
\end{bmatrix}\in\mathbb{R}^{r\times 2}$. Let $H_0=I_r - J_r$, $X^\top (H_0 + D) = Y^\top$, where $D$ is all zero except $D_{11} = \frac{r-1}{r}$.

Let $A_0 = (H_0+XY^\top + YX^\top)^{-1}$, and $A = \begin{bmatrix}I_r\\X^\top \end{bmatrix}A_0[I_r ~X]\in\mathbb{R}^{(r+2)\times(r+2)}$, $\rank(A)=\rank(A_0)=r$. Let $H = \begin{bmatrix}H_0 & Y\\ Y^\top & 0\end{bmatrix}$, and $W = \begin{bmatrix}A_0^{-1}(H_0+D)A_0^{-1} & 0\\ 0 & 0\end{bmatrix}$. These two symmetric matrices $H,W$ satisfy
\begin{align*}
    AHA &= \begin{bmatrix}I_r\\X^\top \end{bmatrix}A_0[I_r ~X]\begin{bmatrix}H_0 & Y\\ Y^\top & 0\end{bmatrix}\begin{bmatrix}I_r\\X^\top \end{bmatrix}A_0[I_r ~X]\\
        &=\begin{bmatrix}I_r\\X^\top \end{bmatrix}A_0(H_0+XY^\top + YX^\top)A_0[I_r ~X]
        =\begin{bmatrix}I_r\\X^\top \end{bmatrix}A_0[I_r ~X]=A,\\
    \langle A,W\rangle &= \mathrm{trace}((H_0+D)A_0^{-1})=\mathrm{trace}((H_0+D)(H_0+XY^\top + YX^\top))\\
    &=\mathrm{trace}(H_0^2+DH_0+2YY^\top)
    =\mathrm{trace}(H_0^2+2YY^\top)\\
    &=(r^2-r)+2r=\|H\|_1~,\\
    AWA &= \begin{bmatrix}I_r\\X^\top \end{bmatrix}A_0[I_r ~X]\begin{bmatrix}A_0^{-1}(H_0+D)A_0^{-1} & 0\\ 0 & 0\end{bmatrix}\begin{bmatrix}I_r\\X^\top \end{bmatrix}A_0[I_r ~X]\\
        &=\begin{bmatrix}I_r\\X^\top \end{bmatrix} (H_0+D)[I_r ~X]
        =\begin{bmatrix}H_0+D & Y\\ Y^\top & X^\top Y\end{bmatrix}\\
        \Rightarrow& ~\|AWA\|_{\max}\le 1 ~~\left(\text{because}~X^\top Y =\begin{bmatrix}
            -\frac{1}{2} & -\frac{r}{2(r-1)} \\
            -\frac{r}{2(r-1)} & \frac{r(r-2)}{2(r-1)^2}
        \end{bmatrix}~\text{and}~r\ge 3\right).
\end{align*}
Therefore, by  weak duality, $H$ and $W$ are  optimal solutions for  primal and dual, and $H$ has exactly $r^2+r$ nonzeros. Also, because $\mathrm{svec}(AWA)$ has exactly $\frac{r^2+r}{2}$ entries with value $\pm1$ corresponding to the position where $\mathrm{svec}(H)$ is nonzero, by complementary slackness, for any primal optimal solution $H^*$, $\mathrm{svec}(H^*)$ is zero in the positions where $\mathrm{svec}(H)$ is nonzero. Then we can easily solve the equation $(A\otimes_S A)\mathrm{svec}(H^*) =\mathrm{svec}(A)$ to obtain the unique solution $\mathrm{svec}(H)$. Therefore the primal problem has a unique optimal extreme solution $H$ with $r^2+r$ nonzeros.
\end{proof}

\begin{remark}
    For the case $n-1=r \geq 3$, we can only construct examples for which the unique optimal extreme solution of the LP for $\min\{\|H\|_1: P1,~H^\top = H\}$ has $r^2+r-1$ nonzeros.
\end{remark}

We seek to do better than what
\cref{prop:basicguarantee_sym}, part \ref{prop:basic1sym} provides.
 We want fewer nonzeros, and we want block structure.
To get these properties, we will give a new recipe for constructing a symmetric reflexive generalized inverse that has at most $r^2$ nonzeros. Our \emph{symmetric block construction} in the following theorem is the same block construction as from \cite{FampaLee2018ORL}, but only over the principal submatrices of $A$.

\begin{theorem}[the proof follows from \cite{FampaLee2018ORL}]\label{thm:symconstruction}
For a symmetric matrix $A\in\mathbb{R}^{n\times n}$, let $r := \rank(A)$. Let $\tilde{A}:=A[S]$ be any $r \times r$
nonsingular principal submatrix of $A$. Let $H\in\mathbb{R}^{n\times n}$ be equal to zero, except its submatrix with row/column indices $S$ is equal to $\tilde{A}^{-1}$. Then $H$ is a symmetric reflexive generalized inverse of $A$.
\end{theorem}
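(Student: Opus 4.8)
The plan is to verify directly that the $H$ produced by the symmetric block construction is symmetric and satisfies \ref{property1} and \ref{property2}; reflexivity is then immediate, since \ref{property2} \emph{is} the reflexivity condition. Symmetry of $H$ is the easy part: because $A$ is symmetric, so is its principal submatrix $\tilde A = A[S]$, hence so is $\tilde A^{-1}$, and $H$ is just $\tilde A^{-1}$ bordered by zeros on the principal block indexed by $S$. So the work is the two Moore--Penrose identities, and for those the argument is a specialization of the block construction of \cite{FampaLee2018ORL} to the case in which the chosen nonsingular $r\times r$ submatrix uses a common index set $S$ for rows and columns.

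The structural fact that drives everything is the rank factorization
\[
A = A[:,S]\,\tilde A^{-1}\,A[S,:].
\]
I would justify it as follows. Since $\tilde A = A[S,S]$ is a submatrix of both $A[S,:]$ and $A[:,S]$, each of these has rank at least $\rank(\tilde A)=r$, and at most $\rank(A)=r$; hence $A[S,:]$ is a basis of the row space of $A$ and $A[:,S]$ is a basis of its column space. Writing each column of $A$ as a linear combination of the columns $A[:,S]$ and reading off the coefficient vectors from the rows indexed by $S$ (where the restriction is the nonsingular map $\tilde A$) gives, column by column, exactly the displayed identity. Equivalently, after permuting $S$ to the front and writing $A=\begin{bmatrix}\tilde A & B\\ B^\top & C\end{bmatrix}$, this fact is the statement $C=B^\top\tilde A^{-1}B$.

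With $E_S$ the $n\times r$ column-selection matrix (the columns of $I_n$ indexed by $S$), we have $H=E_S\tilde A^{-1}E_S^\top$, $AE_S=A[:,S]$, $E_S^\top A=A[S,:]$, and $E_S^\top A E_S = A[S]=\tilde A$. Then
\[
AHA=(AE_S)\,\tilde A^{-1}\,(E_S^\top A)=A[:,S]\,\tilde A^{-1}\,A[S,:]=A
\]
by the factorization, giving \ref{property1}; and
\[
HAH=E_S\tilde A^{-1}\,(E_S^\top A E_S)\,\tilde A^{-1}E_S^\top=E_S\tilde A^{-1}\tilde A\tilde A^{-1}E_S^\top=E_S\tilde A^{-1}E_S^\top=H,
\]
giving \ref{property2}. (The same two computations can be done blockwise on $\begin{bmatrix}\tilde A & B\\ B^\top & C\end{bmatrix}$ and $\begin{bmatrix}\tilde A^{-1} & 0\\ 0 & 0\end{bmatrix}$ using $C=B^\top\tilde A^{-1}B$.) Combining this with the symmetry of $H$ completes the proof. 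The only genuinely non-routine ingredient is the rank factorization $A=A[:,S]\tilde A^{-1}A[S,:]$ (equivalently $C=B^\top\tilde A^{-1}B$); everything else is bookkeeping, and one may additionally remark, as a classical fact, that a symmetric matrix of rank $r$ always admits a nonsingular $r\times r$ \emph{principal} submatrix, so the hypothesis is never vacuous.
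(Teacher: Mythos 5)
Your proof is correct and takes essentially the route the paper intends: the paper gives no written proof, deferring to the block construction of \cite{FampaLee2018ORL}, and your argument is exactly that verification specialized to a principal block, with the rank factorization $A=A[:,S]\tilde A^{-1}A[S,:]$ (equivalently $C=B^\top\tilde A^{-1}B$ after permutation) doing the work for \ref{property1} and the identity $E_S^\top AE_S=\tilde A$ doing the work for \ref{property2}, plus the easy observation that symmetry of $A[S]$ gives symmetry of $H$. This matches the style of the paper's own explicit proof of the analogous column block construction (\cref{thm:ahconstruction}), so there is nothing to flag.
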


%{\color{red}\subsection{Rank 1}
%Next, we demonstrate that when $\rank(A) = 1$,}
Letting $r:=\rank(A)$, when $r=1$ or $r=2$ and $A$ is nonnegative, construction of a $1$-norm minimizing symmetric reflexive generalized inverse can be based on the symmetric block construction over the
%{\color{red} diagonal elements of $A$.}
$r\times r$ principal submatrices of $A$, choosing one such that its inverse has minimum $1$-norm (see \url{https://arxiv.org/abs/1903.05744}).

Generally, when $\rank(A)\geq 2$, we cannot construct a $1$-norm minimizing symmetric reflexive generalized inverse based on the symmetric block construction.
For example, with
$$A:=\begin{bmatrix}
5& 4& 2\\
4& 5& -2\\
2& -2& 8
\end{bmatrix},$$
% we have a $1$-norm minimizing generalized inverse
% $$H:=\begin{bmatrix}
% \frac19& 0& 0\\
% 0& \frac19& 0\\
% 0& 0& \frac19
% \end{bmatrix}$$
% with $\|H\|_1=\frac13$, which is not reflexive, because $\rank(H)=3>\rank(A)=2$.
we have a symmetric reflexive generalized inverse
$H:=\frac{1}{81}A\quad (\text{because}~A^2=9A)$,
with $\|H\|_1=\frac{34}{81}$. While the three symmetric reflexive generalized inverses based on the symmetric block construction have $1$-norm equal to $\frac{17}{36},\frac{17}{36},2$, all greater than $\frac{34}{81}$.

For general $r:=\mathrm{rank}(A)$, we will efficiently find a symmetric reflexive generalized inverse following our symmetric block construction that is within  a factor of $r^2(1+\epsilon)$ of the  1-norm of the symmetric reflexive generalized inverse having minimum 1-norm. Before presenting the approximation result, we first establish  a useful lemma.
\begin{lemma}\label{lem:symm}
For a symmetric matrix $A\in\mathbb{R}^{n\times n}$, let $r:=\mathrm{rank}(A)$. Let $A[S]$ be a $r\times r$ nonsingular principal submatrix of $A$ with indices $S$, and let $A[T]$ be a principal submatrix obtained by swapping an element of $S$ with one from its complement. If $\abs{\det(A[T])}\le(1+\epsilon)\abs{\det(A[S])}$, then we have $\abs{\det(A[S,T])}\le\sqrt{(1+\epsilon)}\abs{\det(A[S])}$.
\end{lemma}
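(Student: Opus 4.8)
The plan is to reduce the claimed inequality to the algebraic identity
\[
\det(A[S,T])^2 \;=\; \det(A[S])\,\det(A[T]),
\]
which in fact holds for \emph{any} index set $T$ with $|T|=r$ (the hypothesis that $T$ is obtained from $S$ by a single swap will not really be needed). The tool behind this identity is the standard factorization
\[
A \;=\; A[:,S]\,A[S]^{-1}\,A[S,:],
\]
valid because $A$ has rank $r$ and $A[S]$ is a nonsingular $r\times r$ principal submatrix: the rows of $A[S,:]$ are linearly independent (since $A[S]$ is nonsingular) and there are $r$ of them lying in a row space of dimension $r$, hence they span it, so every row of $A$ is a unique combination of them. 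Writing the coefficient matrix as $M\in\mathbb{R}^{n\times r}$ gives $A=M\,A[S,:]$ with $M[S,:]=I_r$, and restricting to the columns in $S$ identifies $M=A[:,S]\,A[S]^{-1}$.

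Carrying out the steps: first I restrict the factorization to the rows and columns indexed by $T$, obtaining $A[T,T]=A[T,S]\,A[S]^{-1}\,A[S,T]$. Taking determinants and using $\det(A[S]^{-1})=1/\det(A[S])$ gives
\[
\det(A[T]) \;=\; \frac{\det(A[T,S])\,\det(A[S,T])}{\det(A[S])}.
\]
Next I invoke the symmetry of $A$: since $A[T,S]=A[S,T]^\top$ we have $\det(A[T,S])=\det(A[S,T])$, and the display collapses to $\det(A[S,T])^2=\det(A[S])\,\det(A[T])$. Taking absolute values and applying the hypothesis $\abs{\det(A[T])}\le(1+\epsilon)\abs{\det(A[S])}$ yields $\abs{\det(A[S,T])}^2\le(1+\epsilon)\abs{\det(A[S])}^2$, i.e.\ $\abs{\det(A[S,T])}\le\sqrt{1+\epsilon}\,\abs{\det(A[S])}$, which is the assertion.

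I do not expect a serious obstacle. The only point requiring any care is the justification of the factorization $A=A[:,S]A[S]^{-1}A[S,:]$ — specifically, that the rows of $A[S,:]$ span the entire row space of $A$, which is exactly where $\rank(A[S])=r=\rank(A)$ is used, together with their linear independence. Once that is in place, everything reduces to a two-line determinant computation plus the observation $A[T,S]=A[S,T]^\top$; note that symmetry is genuinely essential here, since without it one only gets the product $\det(A[S,T])\det(A[T,S])=\det(A[S])\det(A[T])$, which does not control $\abs{\det(A[S,T])}$ on its own.
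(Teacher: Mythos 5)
Your proof is correct, and it reaches the same pivotal identity as the paper, namely $\det(A[S,T])^2=\det(A[S])\det(A[T])$, but by a genuinely different route. The paper works in coordinates for the single-swap case: it takes $S=\{1,\dots,r\}$, $T=\{1,\dots,r-1,r+1\}$, solves $A[S]x=a_T$, reads off $\det(A[S,T])=x_r\det(A[S])$ from Cramer's rule, and uses the vanishing Schur complement $d-a_T^\top A[S]^{-1}a_T=0$ (forced by $\rank(A)=r$) to get $\det(A[T])=x_r\det(A[S,T])=x_r^2\det(A[S])$. You instead derive the identity from the global rank factorization $A=A[:,S]\,A[S]^{-1}\,A[S,:]$, restrict to rows and columns $T$, and invoke symmetry via $A[T,S]=A[S,T]^\top$. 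Your justification of the factorization (linear independence of the rows of $A[S,:]$ from nonsingularity of $A[S]$, spanning from $\rank(A)=r$, and identification of the coefficient matrix) is sound, and it uses the rank hypothesis in exactly the same place the paper uses the Schur complement. What your argument buys is generality: the identity holds for \emph{any} $r$-subset $T$, not just one obtained by a single swap, whereas the paper's computation is tied to the swap structure (which is all the lemma needs). Your closing remark that symmetry is essential — without it one only controls the product $\det(A[S,T])\det(A[T,S])$ — is also accurate and matches why the nonsymmetric analogue in \cite{FampaLee2018ORL} requires a different treatment.
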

\begin{proof}
Without loss of generality, assume that $S=\{1,\dots,r\}$ and $T=\{1,\dots,r-1,r+1\}$. Then matrix $A$ is of the form
$$
\begin{bmatrix}
A[S] & a_{T} & *\\
a_{T}^\top & d & *\\
* & * & *
\end{bmatrix}.
$$
Because $A[S]$ is a nonsingular principal submatrix of $A$, the linear system $A[S]\cdot x = a_T$ has a unique solution $x$. $A[S,T]$ is obtained by replacing column $r$ of $A[S]$ with $a_T$, thus $\det(A[S,T]) = x_r \det(A[S])$.
On the other hand, because $\rank(A) = r=\rank(A[S])$, the Schur complement $d-a_T^\top A[S]^{-1} a_T = 0$, which implies that $d = x^\top a_T$. Therefore, $\det(A[T]) =x_r\det(A[S,T]) =  x_r^2\det(A[S])$. We have
\begin{displaymath}
\abs{\det(A[S,T])}^2=\abs{\det(A[S])}\abs{\det(A[T])}\le(1+\epsilon)\abs{\det(A[S])}^2.
\end{displaymath}
\end{proof}

\begin{definition}
Let $A$ be an arbitrary $n\times n$, rank-$r$ matrix. For $S$ an ordered subset of $r$ elements from $\{1,\dots,n\}$ and fixed $\epsilon\ge0$, if $|\det(A[S])|>0$ cannot be increased by a factor of more than $1+\epsilon$ by swapping an element of $S$ with one from its complement, then we say that $A[S]$ is a $(1+\epsilon)$-local maximizer for the absolute determinant on the set of $r\times r$ nonsingular principal submatrices of $A$.
\end{definition}

\begin{theorem}\label{thm:symapprox}
For a symmetric matrix $A\in\mathbb{R}^{n\times n}$, let $r:=\mathrm{rank}(A)$. Choose $\epsilon\ge0$, and let $\tilde{A}:=A[S]$ be a $(1+\epsilon)$-local maximizer for the absolute determinant on the set of $r\times r$ nonsingular principal submatrices of $A$. The $n\times n$ matrix $H$ constructed by \cref{thm:symconstruction} over $\tilde{A}$, is a symmetric reflexive generalized inverse (having at most $r^2$ nonzeros), satisfying $\|H\|_1\le r^2(1+\epsilon)\|H_{opt}^r\|_1$, where $H_{opt}^r$ is a $1$-norm minimizing symmetric reflexive generalized inverse of $A$.
\end{theorem}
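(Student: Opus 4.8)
The plan is to bound $\|H\|_1$ from above by weak LP duality against \eqref{eqn:DS}, using \cref{lem:symm} to control the off-diagonal block of $A$ relative to $\tilde A$. By \cref{thm:symconstruction}, $H$ is zero outside its principal $S\times S$ block, which equals $\tilde A^{-1}$; hence $H$ is a symmetric reflexive generalized inverse with at most $r^2$ nonzeros and $\|H\|_1=\|\tilde A^{-1}\|_1$. Since $H^r_{opt}$ is symmetric and satisfies \ref{property1}, it is feasible for the primal LP \eqref{eqn:PS}, so by weak duality $\|H^r_{opt}\|_1\ge\langle A,W\rangle$ for \emph{every} $W$ feasible for \eqref{eqn:DS}, i.e.\ every symmetric $W$ with $\|AWA\|_{\max}\le1$. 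Thus it suffices to exhibit a symmetric $W$ with $\|AWA\|_{\max}\le1$ and $\langle A,W\rangle\ge \|\tilde A^{-1}\|_1/\bigl(r^2(1+\epsilon)\bigr)$.

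First I would record the block structure forced by the rank hypothesis. After a symmetric permutation take $S=\{1,\dots,r\}$; since $\rank(A)=r$ and $\tilde A=A[S]$ is nonsingular, the rows indexed by $S^c$ lie in the span of those indexed by $S$, which gives $A=\begin{bmatrix}I_r\\ X^\top\end{bmatrix}\tilde A\,[\,I_r~X\,]$, where $X:=\tilde A^{-1}A[S,S^c]\in\mathbb{R}^{r\times(n-r)}$ and the rank condition forces $A[S^c]=X^\top\tilde A X$. The crucial estimate is $\|X\|_{\max}\le\sqrt{1+\epsilon}$: for $i\le r$ and $t\in S^c$, Cramer's rule gives $|X_{it}|=|\det(A[S,T])|/|\det(A[S])|$, where $T$ is obtained from $S$ by swapping out its $i$-th element in favor of $t$; since $\tilde A$ is a $(1+\epsilon)$-local maximizer we have $|\det(A[T])|\le(1+\epsilon)|\det(A[S])|$, and \cref{lem:symm} then yields $|\det(A[S,T])|\le\sqrt{1+\epsilon}\,|\det(A[S])|$.

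Now set $W$ equal to zero outside its $S\times S$ block and equal there to $W_0:=\tfrac{1}{r^2(1+\epsilon)}\,\tilde A^{-1}P\,\tilde A^{-1}$, where $P$ is the entrywise sign matrix of $\tilde A^{-1}$. Since $A$, and hence $\tilde A^{-1}$ and $P$, are symmetric, $W$ is symmetric, and a short trace computation gives $\langle A,W\rangle=\langle\tilde A,W_0\rangle=\tfrac{1}{r^2(1+\epsilon)}\langle P,\tilde A^{-1}\rangle=\tfrac{1}{r^2(1+\epsilon)}\|\tilde A^{-1}\|_1$, the required objective value. For feasibility, $\tilde A W_0\tilde A=\tfrac{1}{r^2(1+\epsilon)}P$, so $AWA=\tfrac{1}{r^2(1+\epsilon)}\begin{bmatrix}P & PX\\ X^\top P & X^\top PX\end{bmatrix}$; using $\|P\|_{\max}\le1$, $\|X\|_{\max}\le\sqrt{1+\epsilon}$ and summing $r$ (resp.\ $r^2$) terms, the three blocks have max-norm at most $\tfrac{1}{r^2(1+\epsilon)}$, $\tfrac{r\sqrt{1+\epsilon}}{r^2(1+\epsilon)}$, and $\tfrac{r^2(1+\epsilon)}{r^2(1+\epsilon)}=1$ respectively (all $\le1$ for $r\ge1$), so $\|AWA\|_{\max}\le1$. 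Combining with the duality bound, $\|H\|_1=\|\tilde A^{-1}\|_1=r^2(1+\epsilon)\,\langle A,W\rangle\le r^2(1+\epsilon)\,\|H^r_{opt}\|_1$.

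The main obstacle is the entrywise bound $\|X\|_{\max}\le\sqrt{1+\epsilon}$: this is the only place local optimality enters, and it relies on \cref{lem:symm} together with the observation that a single principal swap $S\to T$ changes exactly one column of $\tilde A$ while the rank-$r$ Schur-complement identity underlying that lemma ties $\det(A[T])$ to $X_{it}^2\det(A[S])$. This is precisely the step whose analogue in \cite{FampaLee2018ORL} we are repairing. Once the estimate is in hand, the choice of $W$ and the block-by-block max-norm check are routine. One should also remark at the outset that a $(1+\epsilon)$-local maximizer exists, since $A$ has a nonsingular $r\times r$ principal submatrix (as $\rank(A)=r$) and $|\det(A[\,\cdot\,])|$ is bounded, so the swap-based local search terminates.
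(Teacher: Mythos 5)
Your proof is correct and follows essentially the same route as the paper's: the identical dual certificate $\tilde A^{-1}\mathrm{sign}(\tilde A^{-1})\tilde A^{-1}$ placed in the $S\times S$ block, the same Cramer's-rule-plus-\cref{lem:symm} estimate on the off-diagonal blocks, and the same weak-duality conclusion. The only (harmless) difference is that you dualize against the symmetry-constrained LP \eqref{eqn:PS}/\eqref{eqn:DS}, whereas the paper certifies against the unconstrained dual \eqref{eqn:D} and thereby gets the slightly stronger bound $\|H\|_1\le r^2(1+\epsilon)\|H_{opt}\|_1$ with $H_{opt}$ an optimal (not necessarily symmetric) solution of \eqref{eqn:P}.
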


\begin{proof}
We prove a stronger result $\|H\|_1\le r^2(1+\epsilon)\|H_{opt}\|_1$, where $H_{opt}$ is an optimal solution to \eqref{eqn:P}, which implies $\|H\|_1\le r^2(1+\epsilon)\|H_{opt}\|_1 \leq r^2(1+\epsilon)||H_{opt}^r||_1$.

Without loss of generality, we assume that $\tilde{A}$ is in the north-west corner of $A$. So we take $A$ to have the form $\begin{bmatrix}\tilde{A} & B\\ B^\top & D\end{bmatrix}$. Let $M=\mathrm{sign}(\tilde{A}^{-1})$, where $\mathrm{sign}(x)$ is defined as
$x/|x|$, if $x\ne0$, and 0 otherwise. Now we choose
%{\color{red}\noindent Similarly,}
% We choose
% \noindent Now we let
\[
W:=\begin{bmatrix}
\tilde{W} & 0\\
0 & 0
\end{bmatrix}
:=\begin{bmatrix}
\tilde{A}^{-\top}M\tilde{A}^{-\top} & 0\\
0 & 0
\end{bmatrix}.
\]
%{\color{red} Notice that $M$ is different from the proof of \cref{thmr2}. But we still}
The dual objective value
% $$\langle A,W\rangle=\mathrm{trace}(A^\top W)=\mathrm{trace}(\tilde{A}^\top \tilde{W})=\mathrm{trace}(M\tilde{A}^{-\top})=\langle M,\tilde{A}^{-1}\rangle=\norm{\tilde{A}^{-1}}_1=\|H\|_1.$$% i.e.,
$\langle A,W\rangle =\mathrm{trace}(A^\top W)= \mathrm{trace}(M\tilde{A}^{-\top})=\|\tilde{A}^{-1}\|_1=\norm{H}_1$. Also,
$$
A^\top W A^\top=\begin{bmatrix}
M & M\tilde{A}^{-\top}B\\
B^\top\tilde{A}^{-\top}M & B^\top\tilde{A}^{-\top}M\tilde{A}^{-\top}B
\end{bmatrix}.
$$
Clearly $\norm{M}_{\max}\le 1$. Next, we consider $\bar{\gamma}:=M\tilde{A}^{-\top}\gamma=M\tilde{A}^{-1}\gamma$ ($\tilde{A}$ is symmetric), where $\gamma$ is an arbitrary column of $B$. By Cramer's rule, where $\tilde{A}_i(\gamma)$ is $\tilde{A}$ with column $i$ replaced by $\gamma$, we have
\begin{align*}
    \bar{\gamma} &= M\frac{1}{\det(\tilde{A})}\begin{bmatrix}\det(\tilde{A}_1(\gamma))\\ \vdots\\ \det(\tilde{A}_r(\gamma))\end{bmatrix}.
\end{align*}
And for $j=1,\dots, r$, using \cref{lem:symm}, we have
$$
\abs{\bar{\gamma}_j}=\sum_{i=1}^r\mathrm{sign}(\tilde{A}^{-1}_{ji})\frac{\det(\tilde{A}_i(\gamma))}{\det(\tilde{A})}\le\sum_{i=1}^r\frac{\abs{\det(\tilde{A}_i(\gamma))}}{\abs{\det(\tilde{A})}}\le r\sqrt{1+\epsilon},
$$
i.e., $\norm{M\tilde{A}^{-\top}B}_{\max}\le r\sqrt{1+\epsilon}$. Finally, we have
$$\norm{B^\top \tilde{A}^{-\top}M\tilde{A}^{-\top}B}_{\max}\le r^2\norm{B^\top\tilde{A}^{-1}}_{\max}\norm{\tilde{A}^{-1}B}_{\max}\le r^2(1+\epsilon).$$
Therefore, $\norm{A^\top WA^\top}_{\max}\le r^2(1+\epsilon)$; so then $\frac{1}{r^2(1+\epsilon)}W$ is dual feasible. By the weak duality for linear optimization, we have $\langle A,\frac{1}{r^2(1+\epsilon)}W\rangle=\frac{1}{r^2(1+\epsilon)}\norm{H}_1\le\norm{H_{opt}}_1$.
%\hfill\Halmos
\end{proof}
\smallskip
\begin{remark}\label{rmk:epsilon}
In \cref{thm:symapprox}, we could have required the stronger condition that $\tilde{A}$ is a global maximizer for the absolute determinant on the set of $r\times r$ nonsingular principal submatrices of $A$. But we prefer our hypothesis, both because it is weaker and because we can find an $\tilde{A}$ satisfying our hypothesis by a simple finitely-terminating local search. Moreover, if $A$ is rational, and we choose $\epsilon$ positive and fixed, then our local search is efficient:
\end{remark}
\begin{theorem}\label{thm:symFPTAS}
    Let $A$ be rational. We have an FPTAS (fully polynomial-time approximation scheme; see \cite{WilliamsonShmoys}) for calculating a symmetric reflexive generalized inverse $H$ of $A$ that has $\|H\|_1$ within a factor of $r^2$ of $\|H_{opt}^r\|_1$, where $H_{opt}^r$ is a $1$-norm minimizing symmetric reflexive generalized inverse of $A$.
\end{theorem}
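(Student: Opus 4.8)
The plan is to realize the FPTAS as a single, simple local search whose approximation guarantee is supplied for free by \cref{thm:symapprox}. For a fixed rational $\epsilon>0$, \cref{thm:symapprox} reduces everything to producing, in time polynomial in the encoding length $\langle A\rangle$ of $A$ and in $1/\epsilon$, a $(1+\epsilon)$-local maximizer $\tilde A=A[S]$ for the absolute determinant over the $r\times r$ nonsingular principal submatrices of $A$: feeding such an $\tilde A$ into the symmetric block construction of \cref{thm:symconstruction} yields a symmetric reflexive generalized inverse $H$ with at most $r^2$ nonzeros and, by \cref{thm:symapprox}, $\|H\|_1\le r^2(1+\epsilon)\|H_{opt}^r\|_1$; moreover the unique nonzero block of $H$ is $\tilde A^{-1}$, whose entries are ratios of minors of $A$, so $H$ itself has polynomially bounded encoding length.

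First I would handle initialization: the local search needs \emph{some} nonsingular $r\times r$ principal submatrix to start from. Its existence is exactly the hypothesis used in \cref{thm:symconstruction}, and one is found in polynomial time by the symmetric (``diagonal'') pivoting variant of Gaussian elimination---pivot on a nonzero diagonal entry when one exists and recurse on the (symmetric, rank $r-1$) Schur complement; when every diagonal entry vanishes but the matrix is nonzero, a nonzero off-diagonal entry $a_{ij}$ gives the nonsingular $2\times2$ principal block $A[\{i,j\}]$, on which one pivots before recursing (the rank $r$ being computed up front by ordinary elimination). Starting from the resulting $S$, I would iterate the obvious step: while some element of $S$ can be exchanged with some element of its complement so as to increase $|\det(A[S])|$ by a factor strictly exceeding $1+\epsilon$, perform such an exchange. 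Each round inspects $O(r(n-r))$ candidate exchanges, each costing one $r\times r$ determinant on a submatrix of the fixed matrix $A$; since an improving exchange can only keep $|\det(A[S])|$ positive, every iterate is again a nonsingular $r\times r$ principal submatrix, and when the loop halts $A[S]$ is by definition a $(1+\epsilon)$-local maximizer.

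The one substantive point---and the main obstacle---is bounding the number of rounds polynomially. Write $A=(1/q)A'$ with $A'$ integral and $q$ a common denominator; then every nonzero $|\det(A[S])|$ is a positive integer multiple of $q^{-r}$, hence at least $q^{-r}$, whereas Hadamard's inequality gives $|\det(A[S])|\le\big(\sqrt r\,\max_{i,j}|a_{ij}|\big)^{r}$. Since each improving round multiplies $|\det(A[S])|$ by at least $1+\epsilon$, the number of rounds is at most $\log_{1+\epsilon}\!\big(q^{r}(\sqrt r\,\max_{i,j}|a_{ij}|)^{r}\big)$, which, using $\ln(1+\epsilon)\ge\epsilon/(1+\epsilon)$, is $O\!\big((1/\epsilon)\,\mathrm{poly}(\langle A\rangle)\big)$. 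Multiplying by the polynomial per-round cost, and adding the $O(r^{3})$ cost of forming $\tilde A^{-1}$ and assembling $H$ at the end, the whole computation runs in time polynomial in $\langle A\rangle$ and $1/\epsilon$. Together with the factor $r^2(1+\epsilon)$ from \cref{thm:symapprox}, this is exactly an FPTAS producing a symmetric reflexive generalized inverse $H$ with $\|H\|_1$ within a factor of $r^2$ (i.e., $r^2(1+\epsilon)$ for arbitrarily small $\epsilon>0$) of $\|H_{opt}^r\|_1$.
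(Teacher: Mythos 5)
Your proposal is correct and follows essentially the same route as the paper: the paper's proof simply cites the iteration bound of $\mathcal{O}(\mathrm{poly}(\mathrm{size}(A)))(1+\tfrac{1}{\epsilon})$ for the determinant-based local search from \cite[Theorem 10]{FampaLee2018ORL} and combines it with \cref{thm:symapprox}, which is exactly what you do, except that you additionally spell out the standard details (initialization by symmetric pivoting, and the Hadamard-bound/common-denominator argument for the iteration count) that the paper delegates to that citation.
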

\begin{proof}
    Following the proof in \cite[Theorem 10]{FampaLee2018ORL}, we have that the local search reaches a $(1+\epsilon)$-local maximizer for the absolute determinant on the set of $r\times r$ nonsingular principal submatrices of $A$ in at most $\mathcal{O}(\mathrm{poly}(\mathrm{size}(A)))(1+\frac{1}{\epsilon})$ iterations, where $\mathrm{size}(A)$ is the number of bits in a binary encoding of $A$. Along with \cref{thm:symapprox}, we conclude that the local search is an FPTAS.
    %\hfill\Halmos
\end{proof}
\smallskip

\begin{remark}
The general idea of our proof follows the scheme of \cite[Theorem 9]{FampaLee2018ORL}
(the nonsymmetric situation). However, there is a mistake in the proof of \cite[Theorem 9]{FampaLee2018ORL}. To construct a dual feasible solution, \cite{FampaLee2018ORL} chose $\tilde{W}:=\tilde{A}^{-\top}(2I-J)\tilde{A}^{-\top}$ and claimed that $\langle A,W\rangle =\|H\|_1$. This claim does not generally hold for $r>2$, but by instead choosing $\tilde{W}:=\tilde{A}^{-\top} M\tilde{A}^{-\top}$ with $M=\mathrm{sign}(\tilde{A}^{-1})$ chosen as in our \cref{thm:symapprox}, \cite[Theorem 9]{FampaLee2018ORL} still holds as an $r^2(1+\epsilon)^2$-approximation algorithm.
\end{remark}

\section{ah-symmetric results}\label{sec:AHsym}

In this section, let $A$ be an arbitrary $m\times n$ real matrix. We seek to obtain a solution to $\min\{\|H\|_1: \ref{property1}+\ref{property2}+\ref{property3}\}$ (that is, a $1$-norm minimizing ah-symmetric reflexive generalized inverse). As we have mentioned, ah-symmetric
generalized inverses play a key role in solving least square problems. We develop an approximation
approach for
this problem that has many benefits, which we later summarize in \cref{fig:ahsym_compare}.

Note that if $H$ is an ah-symmetric generalized inverse, then $AH = AA^+$, where $A^+$ is the M-P pseudoinverse. Therefore, \ref{property2} ($HAH=H$) becomes a linear constraint $HAA^+=H$, which implies that $\min\{\|H\|_1: \ref{property1}+\ref{property2}+\ref{property3}\}$ can be cast as an LP. However, the extreme solutions of this LP only have a guaranteed bound of $mr + (m-r)(n-r)$ for the number of nonzeros, while the extreme solutions of $\min\{\|H\|_1: \ref{property1}+\ref{property3}\}$ have at most $mr$ nonzeros.

\begin{proposition}~\phantom{geojg}\par \label{prop:basicguarantee}
Suppose that $A\in \mathbb{R}^{m\times n}$ has rank $r$.
\begin{enumerate}[label=(\arabic*)]
\item Extreme solutions of the LP for $\min\{\|H\|_1: \ref{property1}+\ref{property3}\}$ have at most $mr$ nonzeros. Furthermore, the bound  is sharp for all  $m\geq n\geq r\geq 1$. \label{prop:basic13}
\item Extreme solutions of the LP for $\min\{\|H\|_1: \ref{property1}+\ref{property2}+\ref{property3}\}$ have at most $mr+(m-r)(n-r)$ nonzeros.
    \label{prop:basic123}
\end{enumerate}
\end{proposition}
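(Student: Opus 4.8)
The plan is to treat both parts by the device already used in the proof of \cref{prop:basicguarantee_sym}: rewrite each problem as a linear program in nonnegative variables $H^+,H^-$ with $H=H^+-H^-$, and bound the number of nonzeros of an extreme solution by the number of linearly independent equality constraints, equivalently by $nm$ minus the dimension of the affine set those constraints cut out (this is exactly the claim established at the start of the proof of \cref{prop:basicguarantee_sym}, applied with the role of $x$ played by $\mathrm{vec}(H)$).

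For part~\ref{prop:basic13}, I would use the fact recorded just before the proposition that, for $H$ satisfying \ref{property1}, condition \ref{property3} is equivalent to $AH=AA^+$; hence the feasible set of \ref{property1}$+$\ref{property3} is $\{H:AH=AA^+\}$. Vectorized, this is $(I_m\otimes A)\,\mathrm{vec}(H)=\mathrm{vec}(AA^+)$, whose coefficient matrix has rank $m\cdot\rank(A)=mr$, so every extreme solution has at most $mr$ nonzeros. (Equivalently, the system decouples over the $m$ columns of $H$, each column solving a system with $r$ independent equations.) For sharpness, I would take $A:=[\,\hat A\mid 0\,]\in\mathbb{R}^{m\times n}$ where $\hat A\in\mathbb{R}^{m\times r}$ has i.i.d.\ entries from an absolutely continuous density, so that with probability one $\hat A$ has full column rank and $\hat A^+$ has no zero entry (its entries are rational functions of the entries of $\hat A$ that do not vanish identically). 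Then $AA^+=\hat A\hat A^+$, and writing $H$ with top $r$ rows $H_1$ and remaining rows $H_2$, the constraint $AH=AA^+$ reads $\hat A H_1=\hat A\hat A^+$, forcing $H_1=\hat A^+$ while leaving $H_2$ free; hence the unique $1$-norm minimizer has top rows $\hat A^+$ and zero below, which is an extreme solution with exactly $mr$ nonzeros.

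For part~\ref{prop:basic123}, \ref{property1}$+$\ref{property3} is again $AH=AA^+$, and then \ref{property2} ($HAH=H$) becomes the linear condition $HAA^+=H$; so the feasible set is $\mathcal{F}:=\{H:AH=AA^+,\ H(I-AA^+)=0\}$, and it suffices to prove $\dim\mathcal{F}=r(n-r)$, since then the number of independent equality constraints is $nm-r(n-r)=mr+(m-r)(n-r)$. I would compute $\dim\mathcal{F}$ in two steps. First, $H(I-AA^+)=0$ says precisely that every row of $H$ lies in $\mathrm{range}(A)$, so $H=GU^\top$ for a unique $G\in\mathbb{R}^{n\times r}$, where $U\in\mathbb{R}^{m\times r}$ is a fixed matrix whose columns form an orthonormal basis of $\mathrm{range}(A)$; this identifies $\{H:H(I-AA^+)=0\}$ with $\mathbb{R}^{n\times r}$. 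Second, under this identification $AH=AA^+$ becomes $AG=U$ (multiply $AGU^\top=UU^\top$ on the right by $U$; conversely $AG=U$ gives $AGU^\top=UU^\top=AA^+$), a consistent system whose solution set is an affine subspace of $\mathbb{R}^{n\times r}$ of dimension $nr-r^2=r(n-r)$, because $G\mapsto AG$ has rank $r\cdot\rank(A)=r^2$ on $\mathbb{R}^{n\times r}$.

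The main obstacle is the rank bookkeeping in part~\ref{prop:basic123}: the two constraint blocks $AH=AA^+$ (rank $mr$) and $H(I-AA^+)=0$ (rank $(m-r)n$) overlap, so naively adding their ranks yields the too-large value $mr+(m-r)n$. The substitution $H=GU^\top$ is precisely what makes the overlap visible — once $H(I-AA^+)=0$ is imposed, $AH=AA^+$ contributes only $r^2$ further independent equations, not $mr$, which is exactly the correction $(m-r)n-(m-r)(n-r)=(m-r)r$. I would also check $\mathcal{F}\neq\emptyset$ (e.g.\ $H=A^+\in\mathcal{F}$) so that the constraint system is consistent and the extreme-point count applies.
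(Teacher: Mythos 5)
Your proposal is correct, and its overall strategy --- recast each problem as an LP in $(H^+,H^-)$ and bound the nonzeros of an extreme point by the rank of the equality-constraint matrix, exactly as in the preliminary claim at the start of the proof of \cref{prop:basicguarantee_sym} --- is the same as the paper's. Part \ref{prop:basic13} is essentially identical to the paper's argument; the only cosmetic difference is in the sharpness example, where the paper plants $\hat A^+$ in the first $r$ columns of $A$ so that the forced block is the dense $\hat A$ itself, whereas you plant $\hat A$ and force $\hat A^+$, which requires your extra (correct) observation that the entries of $\hat A^+$ are almost surely all nonzero. Where you genuinely diverge is the rank count in part \ref{prop:basic123}. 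The paper computes the rank of the stacked coefficient matrix directly, replacing the block $(AA^+-I_m)\otimes I_n$ by $(AA^+-I_m)\otimes(I_n-A^+A)$ via the identity $(AA^+-I_m)\otimes A^+A=((AA^+-I_m)\otimes A^+)(I_m\otimes A)$, which exhibits the redundant rows as lying in the row space of $I_m\otimes A$; the two surviving blocks then contribute ranks $mr$ and $(m-r)(n-r)$ with trivially intersecting row spaces. You instead compute the nullity: parametrize $\{H: H(I-AA^+)=0\}$ as $H=GU^\top$ with $G\in\mathbb{R}^{n\times r}$, observe that $AH=AA^+$ becomes the consistent rank-$r^2$ system $AG=U$, conclude $\dim\mathcal{F}=r(n-r)$, and recover the constraint rank as $nm-r(n-r)=mr+(m-r)(n-r)$. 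Both computations are valid and yield the same number; yours makes the overlap between the two constraint blocks geometrically transparent and correctly notes that consistency (e.g.\ $A^+\in\mathcal{F}$) is needed for the nullity-to-rank step, while the paper's is a purely row-space manipulation that never needs to exhibit a feasible point.
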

\begin{proof}
% First, we claim that if $\rank(B)=r$, then the extreme solutions of
% the LP associated with
% \[
% \min\{\|x\|_1:~ x\in\mathbb{R}^n,~ Bx=b\}
% \]
% have at most $r$ nonzeros. By reformulating the problem as the LP
% %$\min\{\mathbf{1}^\top (x^++x^-):~[B -B]\left[\begin{smallmatrix}x^+\\ x^-\end{smallmatrix}\right]=b,~x^+,x^-\ge 0\}$,
% \[
% \min\{\mathbf{1}^\top (x^++x^-):~Bx^+ -Bx^- =b,~x^+,x^-\ge 0\},
% \]
% we see that the extreme solutions $(x^+,~x^-)$ should have at least $2n-r$ zeros because there are only $r$ linearly-independent equations, which implies that $x:=x^+-x^-$ has at most $r$ nonzeros.

We have $\min\{\|H\|_1: \ref{property1}+\ref{property3}\}=
\min\{\|H\|_1: ~AH=AA^+\}=$
\[
\min\{\|\mathrm{vec}(H)\|_1: ~(I_m\otimes A)\mathrm{vec}(H)=\mathrm{vec}(AA^+)\},
\]
with $\rank(I_m\otimes A)=\rank(I_m)\rank(A)=mr$.
To see that the bound is sharp, let $\hat{A}$ be a random dense $r \times m$  matrix  (with iid entries taken from any absolutely continuous density),
and then take
$A$ to be all zero except for $\hat{A}^+$ in the western $r$ columns.
Then with probability one: $\hat{A}$ is dense, $\hat{A}$ has rank $r$, and $A$ has a unique generalized inverse which is the M-P pseudoinverse $A^+$, which is all zero except for the dense $r\times m$ block $\hat{A}$ in the northern $r$ rows.
%
%We can take $A$ to be all zero except for a random dense $m\times r$ column block in any position.
%Then with probability one, $A$ has a unique generalized inverse which is the M-P pseudoinverse $A^+$,
%with $A^+$ having a dense $r\times m$ block that is the M-P pseudoinverse of the column block of $A$.
Thus \ref{prop:basic13} holds.

As for $\min\{\|H\|_1: \ref{property1}+\ref{property2}+\ref{property3}\}$, it can be written as
\[
\min\{\|\mathrm{vec}(H)\|_1: ~(I_m\otimes A)\mathrm{vec}(H)=\mathrm{vec}(AA^+), ~[(AA^+ \otimes I_n) - I_{mn}]\mathrm{vec}(H)=0\},
\]
with\hfil\break
\vskip-40pt
\begin{align*}
&\rank\left(\begin{bmatrix}I_m\otimes A\\(AA^+ \otimes I_n) - I_{mn}\end{bmatrix}\right)=\rank\left(\begin{bmatrix}I_m\otimes A\\(AA^+-I_m)\otimes I_n\end{bmatrix}\right)\\
=~&\rank\left(\begin{bmatrix}I_m\otimes A\\(AA^+-I_m)\otimes (I_n-A^+A)\end{bmatrix}\right)=mr+(m-r)(n-r).
\end{align*}
The second-to-last equation follows from the fact that $(AA^+-I_m)\otimes A^+A = ((AA^+-I_m)\otimes A^+)(I_m\otimes A)$.
Thus \ref{prop:basic123} holds.
\end{proof}

\begin{remark}\label{rem:badahsym}
with regard to \cref{prop:basicguarantee}, part \ref{prop:basic123}, the bound $mr+(m-r)(n-r)$ is sharp for $n=r^2$ and $r\ge 2$ (an example will be given in the Appendix). This implies that for large $n$, the best bound should be at least $mr+(r^2-r)(m-r)$. %But we believe that the bound $mr+(m-r)(n-r)$ is not generally sharp.
\end{remark}

We seek to do better than what
 \cref{prop:basicguarantee}, part \ref{prop:basic123} provides (and what \cref{rem:badahsym} indicates can actually be the case).
 We want fewer nonzeros, and we want block structure.
To get these properties,  we give a new
\emph{column block construction},
 producing an ah-symmetric reflexive generalized inverse that has at most $mr$ nonzeros.
\begin{theorem}\label{thm:ahconstruction}
For $A\in\mathbb{R}^{m\times n}$, let $r := \rank(A)$. For any $T$, an ordered subset of $r$ elements from $\{1,\dots,n\}$, let $\hat{A}:=A[:,T]$ be the $m \times r$ submatrix of $A$ formed by columns $T$. If $\rank(\hat{A})=r$, let
$
\hat{H} := \hat{A}^+ =(\hat{A}^\top\hat{A})^{-1}\hat{A}^\top.
$
The $n \times m$ matrix $H$ with all rows equal to zero, except rows $T$, which are given by $\hat{H}$, is an ah-symmetric reflexive generalized inverse of $A$.
\end{theorem}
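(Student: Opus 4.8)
\textbf{Proof plan for \cref{thm:ahconstruction}.}
The plan is to verify directly that the $n\times m$ matrix $H$ described satisfies \ref{property1}, \ref{property2}, and \ref{property3}, exploiting the fact that $\hat{H}=\hat{A}^+$ is the genuine Moore--Penrose pseudoinverse of the full-column-rank matrix $\hat{A}$, so all four M-P identities hold for the pair $(\hat{A},\hat{H})$; in particular $\hat{A}\hat{H}\hat{A}=\hat{A}$, $\hat{H}\hat{A}\hat{H}=\hat{H}$, $\hat{A}\hat{H}=\hat{A}(\hat{A}^\top\hat{A})^{-1}\hat{A}^\top$ is symmetric, and $\hat{H}\hat{A}=I_r$ since $\hat{A}$ has rank $r$. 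The first bookkeeping step is to record, up to a column permutation, that $A=[\hat{A}\ \ A[:,\bar T]]$ where $\bar T$ is the complement of $T$, and that since $\rank(A)=\rank(\hat{A})=r$ the remaining columns are linear combinations of those in $\hat A$: there is an $r\times(n-r)$ matrix $C$ with $A[:,\bar T]=\hat{A}C$, i.e.\ $A=\hat{A}[\,I_r\ \ C\,]$ after permuting columns. Correspondingly $H$ has the block form (after the same permutation of its rows) $H=\begin{bmatrix}\hat{H}\\ 0\end{bmatrix}$, so that $AH=\hat{A}[\,I_r\ \ C\,]\begin{bmatrix}\hat{H}\\ 0\end{bmatrix}=\hat{A}\hat{H}$.

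From $AH=\hat A\hat H$ the three properties follow almost immediately. For \ref{property3}: $AH=\hat{A}\hat{H}$ is symmetric because $\hat{H}=\hat{A}^+$. For \ref{property1}: $AHA=\hat{A}\hat{H}A=\hat{A}\hat{H}\hat{A}[\,I_r\ \ C\,]=\hat{A}[\,I_r\ \ C\,]=A$, using $\hat{A}\hat{H}\hat{A}=\hat{A}$. For \ref{property2}: $HAH=H(AH)=\begin{bmatrix}\hat{H}\\0\end{bmatrix}\hat{A}\hat{H}=\begin{bmatrix}\hat{H}\hat{A}\hat{H}\\0\end{bmatrix}=\begin{bmatrix}\hat{H}\\0\end{bmatrix}=H$, using $\hat{H}\hat{A}\hat{H}=\hat{H}$. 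Finally the sparsity claim: $H$ has at most $r$ nonzero rows, each of length $m$, hence at most $mr$ nonzeros, and $\rank(H)=\rank(\hat{H})=r=\rank(A)$ confirms reflexivity in a second way via Theorem 3.14 of \cite{RohdeThesis}.

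I do not anticipate a genuine obstacle here; the content is entirely in the observation that $\hat{A}^+$ already satisfies all four M-P identities for $\hat{A}$ and that padding with zero rows does not disturb any product of the form $AH\cdots$, because the zero rows of $H$ annihilate exactly the ``extra'' columns of $A$ that are redundant. The only point requiring a small amount of care is the column-permutation bookkeeping: one should either carry the permutation matrix $P$ explicitly ($A_{\text{perm}}=AP$, $H_{\text{perm}}=P^\top H$) and note that \ref{property1}--\ref{property3} are invariant under this simultaneous operation, or simply state ``without loss of generality $T=\{1,\dots,r\}$'' at the outset. I would choose the latter for brevity.
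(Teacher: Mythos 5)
Your proposal is correct and follows essentially the same route as the paper's proof: reduce WLOG to $T=\{1,\dots,r\}$, write the remaining columns of $A$ as linear combinations of those of $\hat{A}$, and verify \ref{property1}--\ref{property3} from the Moore--Penrose identities $\hat{H}\hat{A}=I_r$ and the symmetry of $\hat{A}\hat{H}$. The only cosmetic difference is that you make the factorization $A=\hat{A}[\,I_r\ \ C\,]$ explicit where the paper instead argues $\hat{A}\hat{H}\hat{B}=\hat{B}$ via the projection interpretation of $\hat{A}\hat{H}$; these are the same observation.
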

\begin{proof}
Without loss of generality,
assume that $T:= (1,2,\dots,r)$, so we may write
\[
A=\left[\begin{array}{c}\hat{A}\;\;\; \hat{B}\end{array}\right], \;  H=\left[\begin{array}{c}\hat{H}\\0\end{array}\right].
\]

We have that $H$ satisfies:
\begin{itemize}
\item $\ref{property1}$, as
$
AHA=[\hat{A}\hat{H}\hat{A} \;\;\; \hat{A}\hat{H}\hat{B}]=[\hat{A} \;\;\; \hat{B}] = A,
$
where $\hat{A}\hat{H}\hat{A}= \hat{A}$ because $\hat{H}\hat{A} $ is the $r\times r$ identity matrix, and $\hat{A}\hat{H}\hat{B}= \hat{B}$ because, as $A$ (and  $\hat{A}$) has rank $r$, the columns of $\hat{B}$ are in the range of $\hat{A}$ and $\hat{A}\hat{H}$ is the projection matrix on the range of $\hat{A}$.
\item $\ref{property2}$,
% because $H$ has rank 2.
as
\[
HAH=\left[\begin{array}{c}\hat{H}\hat{A}\hat{H} \\0\end{array}\right]=\left[\begin{array}{c}\hat{H}\\0\end{array}\right]=H,
\]
where we again use  the fact that  $\hat{H}\hat{A} $ is the $r\times r$ identity matrix.
\item $\ref{property3}$, as
$
AH = \hat{A}\hat{H} = \hat{A}(\hat{A}^\top\hat{A})^{-1}\hat{A}^\top
$
is symmetric.
\end{itemize}
    %\hfill\Halmos
\end{proof}

\begin{remark}
We have already mentioned that if $H$ is an ah-symmetric generalized inverse, then $AH = AA^+$. Therefore, \ref{property2} ($HAH=H$) becomes a linear constraint $HAA^+=H$. In fact, rather than linearize using the M-P pseudoinverse $A^+$, we can take any column block ah-symmetric generalized inverse $\hat{H}$ of $A$, and linearize
more efficiently via $HA\hat{H}=H$. The cost of calculating such an $\hat{H}$ is the cost of
calculating the M-P pseudoinverse of an $m\times r$ matrix, rather than the M-P pseudoinverse of the $m\times n$ matrix $A$.
\end{remark}

Similarly as before, we note that it is useful to consider relaxing \ref{property2}, arriving at $\min\{\|H\|_1: \ref{property1}+\ref{property3}\}=\min\{\norm{H}_1: AHA=A,~(AH)^\top=AH\}$, which we re-cast as a linear-optimization problem \eqref{eqn:Pah} and its dual \eqref{eqn:Dah}:
\begin{equation*}\label{eqn:Pah}\tag{\text{$P_{ah}$}}
\begin{array}{ll}
\mbox{minimize }& \langle J, H^+\rangle + \langle J, H^-\rangle\\
\mbox{subject to} & A(H^+ - H^-)A=A,\\
& (H^+ - H^-)^\top A^\top = A(H^+ - H^-),\\
& H^+,H^-\ge 0.
\end{array}
\end{equation*}
\begin{equation*}\label{eqn:Dah}\tag{\text{$D_{ah}$}}
\begin{array}{ll}
\mbox{maximize }& \langle A, W\rangle\\
\mbox{subject to} & -J\le A^\top W A^\top + A^\top (V^\top-V)\le J\\
\end{array}
\end{equation*}
We can see \eqref{eqn:Dah}   as: $\max\{\langle A,W\rangle:~\|A^\top WA^\top+A^\top U\|_{\max}\le 1, ~U^\top=-U\}$.

%{\color{red}
%\subsection{Rank 1}
%Next, we demonstrate that}

When $\rank(A) = 1$, construction of a $1$-norm minimizing ah-symmetric reflexive generalized inverse can be based on the column block construction
over a column $\hat{a}$ that minimizes $\|\hat{a}^+\|_1$  (see \url{https://arxiv.org/abs/1903.05744}).

{\color{red} \subsection{Rank 2}}
Generally, when $\rank(A)=2$, we cannot construct a $1$-norm minimizing ah-symmetric reflexive generalized inverse based on the column block construction. Even under the condition that $A$ is nonnegative, we have the following example:
$$
A =\begin{bmatrix}1&3&8\\ 2&2&8\\ 3&1&8\end{bmatrix}.
$$
% Note that $\mathrm{rank}(A)=2$ because $a_3=2a_1+2a_2$. We have a $1$-norm minimizing ah-symmetric generalize inverse with $1$-norm $\frac{25}{24}$,
% $$
% H :=\begin{bmatrix}
% -\frac16 & 0 &\frac13 \\
% \frac13 & 0 & -\frac16\\
% 0 &\frac{1}{24} & 0
% \end{bmatrix},
% $$
% which is not reflexive, because $\mathrm{rank}(H)=3>\rank(A)=2$.
Note that $\mathrm{rank}(A)=2$ because $a_3=2a_1+2a_2$. We have an ah-symmetric reflexive generalize inverse with $1$-norm $\frac98$,
$$
H :=\begin{bmatrix}
-\frac14 & 0 &\frac14 \\
\frac14 & 0 & -\frac14\\
\frac{1}{24} &\frac{1}{24} & \frac{1}{24}
\end{bmatrix}.
$$
However, the three ah-symmetric reflexive generalized inverses based on our column block construction have $1$-norm
$\frac{31}{24}, \frac{31}{24}, \frac{7}{6}$, respectively.
%% Note that in this example,
%% $$
%% \hat{A}^+ :=\begin{bmatrix}
%% -\frac16 & \frac{1}{12} &\frac13 \\
%% \frac13 & \frac{1}{12} & -\frac16\
%% \end{bmatrix}
%% $$
%% and
%% \[
%% \mbox{sign}(\hat{A}_{12}^+) \ne - \mbox{sign}(\hat{A}_{22}^+).
%% \]
%% Also note that
%% $$
%% H^r :=\begin{bmatrix}
%% -\frac14 & 0 &\frac14 \\
%% \frac14 & 0 & -\frac14\\
%% \frac{1}{24} &\frac{1}{24} & \frac{1}{24}
%% \end{bmatrix}
%% $$
%% is an optimal ah-symmetric reflexive generalized inverse of $A$ minimizing $1$-norm, with $\|H^r\|_1=\frac98$.
%
%{\color{red}
%Next, we demonstrate that
%}
%{\color{blue}
Nevertheless, under an efficiently-checkable technical condition, when $\rank(A) = 2$, construction of a $1$-norm minimizing ah-symmetric reflexive generalized inverse can be based on the column block construction (see \url{https://arxiv.org/abs/1903.05744}).

{\color{red}\subsection{Approximation}}
For general $r:=\mathrm{rank}(A)$, we will efficiently find an ah-symmetric reflexive generalized inverse following our column block construction that is within  a factor $r(1+\epsilon)$ of the 1-norm of the ah-symmetric reflexive generalized inverse having minimum 1-norm.
\smallskip
\begin{definition}
Let $A$ be an arbitrary $m\times n$, rank-$r$ matrix, and let $S$ be an ordered subset of $r$ elements from $\{1,\dots,m\}$ such that these $r$ rows of $A$ are linearly independent. For $T$ an ordered subset of $r$ elements from $\{1,\dots,n\}$, and fixed $\epsilon\ge0$, if $|\det(A[S,T])|$ cannot be increased by a factor of more than $1+\epsilon$ by swapping an element of $T$ with one from its complement, then we say that $A[S,T]$ is a $(1+\epsilon)$-local maximizer for the absolute determinant on the set of $r\times r$ nonsingular submatrices of $A[S,:]$.
\end{definition}

\begin{lemma}\label{thm:WU}
Let $T$ be an ordered subset of $r$ elements from $\{1,\dots,n\}$ and $\hat{A}:=A[:,T]$ be the $m\times r$ submatrix of an $m\times n$ matrix $A$ formed by columns $T$, and $\rank(\hat{A})=r$. There exists an $m\times n$ matrix $W$ and a skew-symmetric $m\times m$ matrix $U$ such that
\begin{equation*}
\hat{A}^\top W A^\top + \hat{A}^\top U=E,
\end{equation*}
where $E :=\mbox{sign}(\hat{A}^+)$. Furthermore, $\langle A,W\rangle=\|\hat{A}^+\|_1$.
\end{lemma}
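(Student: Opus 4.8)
The plan is to construct $W$ and $U$ explicitly, mimicking the structure of the dual-feasible solution used in the proof of Theorem \ref{thm:symapprox}, but now accommodating the extra skew-symmetric degree of freedom that \eqref{eqn:Dah} provides. Without loss of generality I would assume $T=(1,\dots,r)$, so that $A=[\hat A\ \hat B]$ with $\hat A\in\mathbb{R}^{m\times r}$ of full column rank $r$, and $\hat H:=\hat A^+=(\hat A^\top\hat A)^{-1}\hat A^\top$. Since the target equation only involves $\hat A^\top W A^\top+\hat A^\top U$, and $\hat A^\top$ has full row rank, the natural move is to look for $W$ of the form $W=\hat A^{+\top}\tilde W\,\hat A^{+\top}$-type expression supported so that $\hat A^\top W$ has a clean form. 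Concretely, I would try $W := \hat H^\top M \hat H$ for a suitable $m\times m$ (or $r\times r$, suitably padded) matrix $M$, noting $\hat A^\top \hat H^\top = (\hat H\hat A)^\top = I_r$, so that $\hat A^\top W A^\top = M \hat H A^\top$. Then $\hat H A^\top = \hat H[\hat A\ \hat B]^\top = [\,\hat H\hat A^\top\ \ \hat H\hat B^\top\,]^\top$... — more carefully, $\hat H A^\top$ is an $r\times m$ matrix, and one checks $\hat H A^\top = \hat A^\top$ is false in general; rather the relevant identity is $A\hat H = \hat A\hat H$ is the orthogonal projector onto $\mathrm{range}(\hat A)$, so $\hat H^\top \hat A^\top$ is that same projector and $A^\top \hat H^\top = $ something clean. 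I would organize the computation around the projector $P:=\hat A\hat H$.

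The key steps, in order: (i) reduce to $T=(1,\dots,r)$ and fix notation $A=[\hat A\ \hat B]$, $\hat H=\hat A^+$, $P=\hat A\hat H$ (symmetric idempotent); (ii) set $E=\mathrm{sign}(\hat H)\in\mathbb{R}^{r\times m}$ and seek $W$ so that $\hat A^\top W A^\top$ contributes $E$ on the ``$\hat A$-part'' and the skew-symmetric correction $\hat A^\top U$ kills whatever spurious term appears on the ``$\hat B$-part''; (iii) exploit that any $r\times m$ matrix can be written as (something)$\cdot\hat A^\top$ plus (something)$\cdot$(complement), using that $\hat A^\top$ has rank $r$, to solve the linear system for $W$ in closed form; (iv) choose $U$ skew-symmetric to absorb the residual — this is possible precisely because the residual lands in a space that the map $U\mapsto \hat A^\top U$ (restricted to skew-symmetric $U$) can reach, which I would verify by a dimension/rank count, or more constructively by writing $U$ block-wise in a basis adapted to $\mathrm{range}(\hat A)$ and its orthogonal complement; (v) finally compute $\langle A,W\rangle = \mathrm{trace}(A^\top W) = \mathrm{trace}(A^\top \hat H^\top M\hat H)=\mathrm{trace}(M\hat H A^\top \hat H^\top)$ and simplify, using $\hat H\hat A = I_r$ and the definition of $E=\mathrm{sign}(\hat H)$, to land on $\langle A,W\rangle = \langle E,\hat H\rangle = \|\hat H\|_1 = \|\hat A^+\|_1$.

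I expect the main obstacle to be step (iv): showing that the skew-symmetric matrix $U$ with the required property actually exists. The equation $\hat A^\top W A^\top + \hat A^\top U = E$ has $\hat A^\top$ (rank $r$, with $r\le m$) multiplying on the left of both unknown terms, so the equation lives in an $r\times m$ space; $W$ is an unconstrained $m\times n$ matrix, which already gives enough freedom to solve $\hat A^\top W A^\top = E$ whenever $A^\top$ has full row rank (i.e.\ rank $r$ as well, which it does) — so perhaps $U=0$ works, or nearly so, and the role of $U$ is only to handle the case $m>r$ where $\hat A^\top W$ cannot be made completely arbitrary because $W A^\top$ has columns constrained to $\mathrm{range}(A^\top\cdot)$... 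I would need to trace carefully whether $W$ alone suffices. If it does not, the cleanest route is: decompose $E = E P + E(I-P)$ along the projector $P=\hat A\hat H$; handle $EP$ with $W$ (setting $W:=\hat H^\top E \hat H$-style, using $A^\top\hat H^\top=P^\top=P$... actually $A^\top \hat H^\top$ is $n\times m$, one must be careful with which side), and handle $E(I-P)$ — which has columns orthogonal to $\mathrm{range}(\hat A)$ — with the skew-symmetric term, using that $\hat A^\top U$ can produce exactly the $r\times m$ matrices whose rows, viewed appropriately, pair $\mathrm{range}(\hat A)$ against $\mathrm{range}(\hat A)^\perp$. A basis change making $\hat A = \left[\begin{smallmatrix}R\\0\end{smallmatrix}\right]$ with $R$ invertible $r\times r$ reduces everything to an explicit block computation, which is the form in which I would actually write the proof.
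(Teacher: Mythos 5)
Your route is genuinely different from the paper's, and it can be completed. The paper avoids projectors entirely: it fixes a nonsingular $r\times r$ submatrix $\tilde{A}:=A[S,T]$ of $\hat{A}$, takes $W$ supported only on the block $S\times T$ with $\hat{W}:=\tilde{A}^{-\top}E(\hat{A}^\top)^+$, and writes down the explicit skew-symmetric matrix $U:=\hat{A}\hat{W}^\top D-D^\top\hat{W}\hat{A}^\top+D^\top\tilde{A}^{-\top}E-E^\top\tilde{A}^{-1}D$ (with $D$ the $0/1$ row-selection matrix for $S$); using $D\hat{A}=\tilde{A}$ one gets $\hat{A}^\top U=E-\tilde{A}^\top\hat{W}\hat{A}^\top$ in three lines, and everything else is a direct verification. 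What the paper's version buys is an explicit, sparse $W$ (a single $r\times r$ block) and no case analysis; what your version buys is transparency about \emph{why} the skew-symmetric term is unavoidable.

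On the question you leave open in step (iv): $W$ alone does \emph{not} suffice in general. The image of $W\mapsto\hat{A}^\top WA^\top$ is exactly the set of $r\times m$ matrices $Z$ with $Z=ZP$, where $P:=\hat{A}\hat{A}^+=AA^+$, because every row of $WA^\top$ is a combination of columns of $A$, whose span equals $\mathrm{range}(\hat{A})$. Since $E=\mathrm{sign}(\hat{A}^+)$ generically has $E(I-P)\ne0$ when $m>r$, the term $\hat{A}^\top U$ must produce $E(I-P)$, and it can: take $W:=\hat{A}^{+\top}EA^{+\top}$ (your candidate $\hat{H}^\top M\hat{H}$ is $m\times m$ rather than $m\times n$, and the trace in your step (v) is of a non-square matrix, so both need this repair), which gives $\hat{A}^\top WA^\top=(\hat{A}^+\hat{A})^\top E(AA^+)^\top=EP$ and $\langle A,W\rangle=\mathrm{trace}(\hat{A}^{+\top}EP)=\mathrm{trace}(P\hat{A}^{+\top}E)=\mathrm{trace}(\hat{A}^{+\top}E)=\|\hat{A}^+\|_1$, using $P\hat{A}^{+\top}=\hat{A}^{+\top}$. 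Then, with $Q_1,Q_2$ orthonormal bases of $\mathrm{range}(\hat{A})$ and its orthogonal complement, set $U:=Q_1GQ_2^\top-Q_2G^\top Q_1^\top$ with $G:=(\hat{A}^\top Q_1)^{-1}EQ_2$; this $U$ is skew-symmetric, $\hat{A}^\top Q_2=0$ gives $\hat{A}^\top U=EQ_2Q_2^\top=E(I-P)$, and adding the two identities yields $\hat{A}^\top WA^\top+\hat{A}^\top U=E$. With these details written out, your argument is a correct alternative proof, and it plugs into \cref{thmrwithP3} unchanged, since that proof uses only the displayed identity and the value of $\langle A,W\rangle$, not the particular form of $W$.
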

\begin{proof}
Suppose that $\tilde{A}:=A[S,T]$ is the nonsingular $r\times r$ submatrix of $\hat{A}$ formed by rows $S:=\{i_1, i_2,\dots,i_r\}$. Let $\hat{W}$ be a $r\times r$ matrix and $W$ be an $m\times n$ matrix with all elements equal to zero, except the ones in rows $S$ and columns $T$, which are given by the respective elements in $\hat{W}$.
If we choose $\hat{W}$ and $U$ to be
$$
\hat{W}:=\tilde{A}^{-\top}E\hat{A}(\hat{A}^\top\hat{A})^{-1}=\tilde{A}^{-\top}E(\hat{A}^\top)^+
$$
and
$$
U := \hat{A} \hat{W}^\top D -D^\top\hat{W} \hat{A}^\top +D^\top \tilde{A}^{-\top} E- E^\top \tilde{A}^{-1}D~,
$$
where $D$ is a $r\times m$ matrix with all elements equal to zero, except $D_{1i_1}=D_{2i_2}=\dots=D_{ri_r}=1$.

Because $D\hat{A}=\tilde{A}$, we have
\begin{align*}
\hat{A}^\top U &= \hat{A}^\top\hat{A}\hat{W}^\top D-\tilde{A}^\top\hat{W}\hat{A}^\top+E - \hat{A}^\top E^\top\tilde{A}^{-1}D\\
&=E-\tilde{A}^\top\hat{W}\hat{A}^\top + (\hat{W}(\hat{A}^\top\hat{A})-\tilde{A}^{-\top}E\hat{A})^\top D\\
&=E-\tilde{A}^\top\hat{W}\hat{A}^\top.
\end{align*}
Hence, $\hat{A}^\top W A^\top +\hat{A}^\top U =\tilde{A}^\top \hat{W} \hat{A}^\top +\hat{A}^\top U =E$. Furthermore,
\begin{displaymath}
\langle A, W\rangle=\mbox{trace}(\tilde{A}^\top \hat{W}) = \mbox{trace}(E(\hat{A}^\top )^+ ) = \langle \hat{A}^+,E\rangle = \|\hat{A}^+\|_1~.
\end{displaymath}
% where \eqref{identity1} holds because, as $T$ is skew-symmetric and $(\hat{A}^\top )^+ \hat{A}^\top$ is symmetric, the trace of their product is zero.
%\hfill\Halmos
\end{proof}

\begin{theorem}
\label{thmrwithP3}
Let $A$ be an arbitrary $m\times n$, rank-$r$ matrix, and let $S$ be an ordered subset of $r$ elements from $\{1,\dots,m\}$ such that these $r$ rows of $A$ are linearly independent. Choose $\epsilon\ge0$, and let $\tilde{A}:=A[S,T]$ be a $(1+\epsilon)$-local maximizer for the absolute determinant on the set of $r\times r$ nonsingular submatrices of $A[S,:]$. Then the $n \times m$ matrix $H$ constructed by \cref{thm:ahconstruction} over $\hat{A}:=A[:,T]$, is an ah-symmetric reflexive generalized inverse of $A$ satisfying $\|H\|_1\le r(1+\epsilon)\|H_{opt}^{r}\|_1$, where $H_{opt}^r$ is a $1$-norm minimizing ah-symmetric reflexive generalized inverse of $A$.
% Let $\hat{A}:=A[:,T]$ be the $m\times r$ submatrix of $A$ formed by columns $T$, and construct
% \[
% \hat{H} := \hat{A}^+ =(\hat{A}^\top\hat{A})^{-1}\hat{A}^\top.
% \]
% The $n \times m$ matrix $H$ with all rows equal to zero, except rows $T$, which are given by the rows of $\hat{H}$, respectively, is an ah-symmetric reflexive generalized inverse of $A$ satisfying $\|H\|_1\le r(1+\epsilon)\|H_{opt}^{ah}\|_1$, where $H_{opt}^{ah}$ is an $1$-norm minimizing ah-symmetric generalized inverse of $A$.
\end{theorem}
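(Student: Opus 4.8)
The plan is to follow closely the scheme of \cref{thm:symapprox}, with \cref{thm:WU} playing the role that \cref{lem:symm} played there. As in that proof, I would establish the stronger statement $\|H\|_1 \le r(1+\epsilon)\|H_{opt}\|_1$, where $H_{opt}$ is an optimal solution of the relaxation \eqref{eqn:Pah}; since $H_{opt}^r$ satisfies \ref{property1}$+$\ref{property3}, it is feasible for \eqref{eqn:Pah}, so $\|H_{opt}\|_1\le\|H_{opt}^r\|_1$ and the stated bound follows. Without loss of generality take $T=(1,\dots,r)$ and write $A=[\,\hat{A}\ \ \hat{B}\,]$ with $\hat{A}=A[:,T]$, so that by \cref{thm:ahconstruction} we have $H=[\hat{H};\,0]$ with $\hat{H}=\hat{A}^+$, and hence $\|H\|_1=\|\hat{A}^+\|_1$. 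Apply \cref{thm:WU} to get an $m\times n$ matrix $W$ and a skew-symmetric $m\times m$ matrix $U$ with $\hat{A}^\top W A^\top+\hat{A}^\top U=E:=\mathrm{sign}(\hat{A}^+)$ and $\langle A,W\rangle=\|\hat{A}^+\|_1=\|H\|_1$. The whole proof then reduces to showing $\|A^\top W A^\top+A^\top U\|_{\max}\le r(1+\epsilon)$; granting that, $\tfrac{1}{r(1+\epsilon)}(W,U)$ is feasible for \eqref{eqn:Dah} (the scaled $U$ is still skew-symmetric), and weak duality gives $\tfrac{1}{r(1+\epsilon)}\|H\|_1=\langle A,\tfrac{1}{r(1+\epsilon)}W\rangle\le\|H_{opt}\|_1$, as needed.

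To bound $\|A^\top W A^\top+A^\top U\|_{\max}$, split $A^\top$ into its block rows $\hat{A}^\top$ and $\hat{B}^\top$. The rows of $A^\top W A^\top+A^\top U$ indexed by $T$ form exactly $\hat{A}^\top W A^\top+\hat{A}^\top U=E$, whose entries lie in $\{-1,0,1\}$. For a row indexed by a column $\gamma$ of $\hat{B}$ (say column $k\notin T$ of $A$), I would use that $\hat{A}$ has full column rank $r$ and its columns span the column space of $A$: thus $\gamma=\hat{A}x$ for a unique $x$, and restricting this identity to the rows $S$ gives $\tilde{A}x=\gamma[S]$, i.e. $x=\tilde{A}^{-1}\gamma[S]$. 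Hence the $\gamma$-row of $A^\top W A^\top+A^\top U$ equals $\gamma^\top(WA^\top+U)=x^\top\hat{A}^\top(WA^\top+U)=x^\top(\hat{A}^\top W A^\top+\hat{A}^\top U)=x^\top E$.

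It remains to estimate $|x_i|$. By Cramer's rule $x_i=\det(\tilde{A}_i(\gamma[S]))/\det(\tilde{A})$, where $\tilde{A}_i(\gamma[S])$ is $\tilde{A}$ with its $i$-th column replaced by $\gamma[S]$; but $\tilde{A}_i(\gamma[S])=A[S,T^{(i)}]$, with $T^{(i)}$ the set $T$ with its $i$-th element swapped for $k$. Since $\tilde{A}=A[S,T]$ is a $(1+\epsilon)$-local maximizer of the absolute determinant over $r\times r$ nonsingular submatrices of $A[S,:]$, we get $|\det(A[S,T^{(i)}])|\le(1+\epsilon)|\det(\tilde{A})|$, so $|x_i|\le 1+\epsilon$ for every $i$. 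Therefore each entry of $x^\top E$ is at most $\sum_{i=1}^r|x_i|\,|E_{ij}|\le r(1+\epsilon)$ in absolute value, and combined with the bound on the $T$-rows this yields $\|A^\top W A^\top+A^\top U\|_{\max}\le r(1+\epsilon)$, completing the argument. The one genuinely substantive step is the reduction of the complement-rows to $x^\top E$ followed by the Cramer's-rule estimate; everything else is bookkeeping. Note also that, in contrast with \cref{thm:symapprox}, no square-root refinement à la \cref{lem:symm} is available or needed here, because $S$ is held fixed and only $T$ varies, so the determinant bound enters linearly rather than through a product of two swapped determinants.
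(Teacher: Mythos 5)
Your proposal is correct and follows essentially the same route as the paper's proof: reduce to the LP relaxation \eqref{eqn:Pah}, invoke \cref{thm:WU} for the dual pair $(W,U)$ with $\langle A,W\rangle=\|\hat{A}^+\|_1$, and bound the rows of $A^\top WA^\top+A^\top U$ indexed by the complement of $T$ by writing each such column of $A$ as $\hat{A}\beta$ and applying Cramer's rule with the $(1+\epsilon)$-local-maximizer hypothesis to get $|\beta_i|\le 1+\epsilon$. Your closing remark about why no square-root refinement à la \cref{lem:symm} is needed here (only $T$ varies, $S$ is fixed) is an accurate observation that matches the structure of the paper's argument.
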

\begin{proof}
We prove a stronger result $\|H\|_1\le r(1+\epsilon)\|H_{opt}^{ah}\|_1$, where $H_{opt}^{ah}$ is an optimal solution of \eqref{eqn:Pah}, which implies $\|H\|_1\le r(1+\epsilon)\|H_{opt}^{ah}\|_1\le r(1+\epsilon)\|H_{opt}^{r}\|$. We will construct a dual feasible solution with objective value $\frac{1}{r(1+\epsilon)}\| H\|_1$. By weak duality for linear optimization, we will then have $\frac{1}{r(1+\epsilon)}\| H\|_1\le \|H_{opt}^{ah}\|_1$.

By \cref{thm:WU}, we can choose $W$ and a skew-symmetric matrix $U$ such that
$
\hat{A}^\top W A^\top + \hat{A}^\top U=E~.
$
and
$
\langle A,W\rangle = \|\hat{A}^+\|_1=\|H\|_1~.
$

So it is sufficient to demonstrate that $\|A^\top W A^\top +A^\top U\|_{\max}\le r(1+\epsilon)$, then $\frac{1}{r(1+\epsilon)}W, \frac{1}{r(1+\epsilon)}U$ is dual feasible and $\langle A,\frac{1}{r(1+\epsilon)}W \rangle=\frac{1}{r(1+\epsilon)}\|H\|_1$.

First, it is clear that
$
\| \hat{A}^\top W A^\top +  \hat{A}^\top U\|_{\max} =\|E\|_{\max}= 1\leq r(1+\epsilon).
$
Next, we consider any column $\hat{b}$ of $\hat{B}$, because $\mbox{rank}(\hat{A})=r=\mbox{rank}(A)$, we know that $\hat{b}=\hat{A}\beta$, $\beta\in\mathbb{R}^r$, which implies $\tilde{b}=\tilde{A}\beta$. By Cramer's rule, where $\tilde{A}_i(\tilde{b})$ is $\tilde{A}$ with column $i$ replaced by $\tilde{b}$, we have
$$
|\beta_i|=\frac{|\det(\tilde{A}_i(\tilde{b}))|}{|\det(\tilde{A})|}\le 1+\epsilon,
$$
because $\tilde{A}$ is a $(1+\epsilon)$-local maximizer for the absolute determinant of $A[S,:]$. Therefore
\begin{align*}
\|\hat{b}^\top W A^\top +  \hat{b}^\top U \|_{\max}&= \|\beta^\top (\hat{A}^\top W A^\top +\hat{A}^\top U)\|_{\max} \\
&= \|\beta^\top E\|_{\max}\le\sum_{i=1}^r|\beta_i|\le r(1+\epsilon).
\end{align*}
%\hfill\Halmos
\end{proof}

\smallskip
\begin{remark}
In \cref{thmrwithP3}, we could have required the stronger condition that $\tilde{A}$ is a global maximizer for the absolute determinant on the set of $r\times r$ nonsingular submatrices of $A_{\sigma}$. But we prefer our hypothesis --- the reasons are the same as in \cref{rmk:epsilon}. And the local search is efficient:
\end{remark}
\begin{theorem}\label{thm:ahFPTAS}
    Let $A$ be rational. We have an FPTAS for calculating an ah-symmetric reflexive generalized inverse $H$ of $A$ that has $\|H\|_1$ within a factor of $r$ of $\|H_{opt}^{r}\|_1$, where $H_{opt}^r$ is a $1$-norm minimizing ah-symmetric reflexive generalized inverse of $A$.
\end{theorem}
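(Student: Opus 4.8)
The plan is to transcribe the argument for \cref{thm:symFPTAS} (which itself follows \cite[Theorem 10]{FampaLee2018ORL}), replacing the principal-submatrix swap by the column swap underlying \cref{thmrwithP3}. First, in polynomial time compute a set $S$ of $r$ row indices for which the rows $A[S,:]$ are linearly independent (e.g.\ by Gaussian elimination); since $\rank(A)=r$, these rows span the row space, so $A[S,:]$ has rank $r$ and some $r\times r$ submatrix $A[S,T_0]$ is nonsingular, with such a $T_0$ found along the way. Starting from $T:=T_0$, repeatedly test each of the at most $r(n-r)$ swaps of one index of $T$ with one from its complement; if some swap increases $|\det(A[S,T])|$ by a factor exceeding $1+\epsilon$, perform it and iterate, otherwise stop. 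At termination $A[S,T]$ is, by definition, a $(1+\epsilon)$-local maximizer for the absolute determinant on the set of nonsingular $r\times r$ submatrices of $A[S,:]$; since accepted swaps keep the determinant nonzero, $\hat A:=A[:,T]$ has rank $r$, so \cref{thm:ahconstruction} produces from $\hat A$ an ah-symmetric reflexive generalized inverse $H$, and \cref{thmrwithP3} gives $\|H\|_1\le r(1+\epsilon)\|H_{opt}^r\|_1$.

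It remains to bound the running time. Each iteration evaluates $O(r(n-r))$ determinants of $r\times r$ rational matrices, and the concluding step computes $\hat A^+=(\hat A^\top\hat A)^{-1}\hat A^\top$; all of this is polynomial in $\mathrm{size}(A)$. For the number of iterations, every accepted swap multiplies $|\det(A[S,T])|$ by more than $1+\epsilon$. By the Hadamard bound, $|\det(A[S,T])|\le(\sqrt{r}\,\|A\|_{\max})^r=2^{\mathcal{O}(\mathrm{size}(A))}$ throughout, while for rational $A$ any nonzero such determinant is at least the reciprocal of the product of the denominators of the entries involved, hence $\ge 2^{-\mathcal{O}(\mathrm{size}(A))}$. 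Therefore the number of iterations is at most $\log_{1+\epsilon}\!\left(2^{\mathcal{O}(\mathrm{size}(A))}\right)=\mathcal{O}(\mathrm{size}(A))/\log(1+\epsilon)$. Using $\log(1+\epsilon)\ge\frac12\epsilon$ for $0<\epsilon\le1$ (and replacing any requested $\epsilon>1$ by $\epsilon=1$, since a $2$-local maximizer is a fortiori a $(1+\epsilon)$-local maximizer and the resulting guarantee $2r\|H_{opt}^r\|_1\le r(1+\epsilon)\|H_{opt}^r\|_1$ still holds), this is $\mathcal{O}(\mathrm{poly}(\mathrm{size}(A)))(1+\frac1\epsilon)$, the same iteration bound used in the proof of \cref{thm:symFPTAS}. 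Combined with the $r(1+\epsilon)$ guarantee of \cref{thmrwithP3}, the whole procedure is an FPTAS.

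The one genuinely delicate point — exactly as for \cref{thm:symFPTAS} — is this last estimate: one must check that $1/\log(1+\epsilon)$ contributes only a factor polynomial (in fact linear) in $1/\epsilon$, and that the initial absolute determinant is not exponentially small, for which rationality of $A$ is essential (otherwise the local search need not terminate in polynomially many steps). Everything else is routine, and in fact slightly simpler than in \cref{thm:symFPTAS}, because here the row set $S$ is fixed and only columns are swapped, so the local-search dynamics coincide with the classical determinant-local-search used in the non-symmetric case of \cite{FampaLee2018ORL}.
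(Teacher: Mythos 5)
Your proposal is correct and follows the same route the paper intends: the paper gives no explicit proof of \cref{thm:ahFPTAS}, relying on the analogue of the argument for \cref{thm:symFPTAS} (itself deferred to \cite[Theorem 10]{FampaLee2018ORL}), namely that the determinant-based column-swap local search terminates in $\mathcal{O}(\mathrm{poly}(\mathrm{size}(A)))(1+\frac{1}{\epsilon})$ iterations and then \cref{thmrwithP3} supplies the $r(1+\epsilon)$ guarantee. You have simply filled in the standard details (Hadamard upper bound, rationality lower bound on the initial nonzero determinant, $\log(1+\epsilon)\ge\epsilon/2$), all of which check out.
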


As we have mentioned, ah-symmetric generalized inverses
have the key use for solving least-squares problems.
In \cref{fig:ahsym_compare}, we compare various possibilities for
calculating ah-symmetric generalized inverses, highlighting
the excellent properties of the solution produced by our local search.

Considering \cref{fig:ahsym_compare},
we dismiss methods based on minimizing the 0-norm as
we do not have nice computational methods for them, and
they suffer from not being able to control the magnitudes of
entries. Concentrating now on tractable optimization methods
(that seek to keep the magnitude of entries under control),
we have LP-based methods and our local search.

\begin{figure}[t]
\caption{Comparing options for ah-symmetric generalized inverses}\label{fig:ahsym_compare}
\begin{center}
\begin{mpsupertabular}{c|c|c|c|c|c|c|c||l}
  % after \\: \hline or \cline{col1-col2} \cline{col3-col4} ...
  \rot{arbitrary ah-sym} & \rot{arbitrary reflexive ah-sym} & \rot{0-norm min ah-sym} & \rot{0-norm min reflexive ah-sym} & \rot{LP:\ref{property1}+\ref{property3}} & \rot{LP:\ref{property1}+\ref{property2}+\ref{property3}} & \rot{arbitrary block} & \rot{our local search} & \rot{} \\
    \hline
  \xmark & \xmark & \xmark & \xmark & \cmark  &  \cmark  & \xmark &  \cmark  & entries under control\footnote{via 1-norm pressure} \\
  \xmark & \cmark & \xmark & \cmark & \xmark &  \cmark  &\cmark
   &  \cmark  & guaranteed low rank ($=r$\footnote{via \ref{property2}}) \\
  \xmark & \xmark & \xmark & \xmark & \xmark &  \xmark  &\cmark &  \cmark  & structured\footnote{via column block construction}\\
  \xmark & \xmark & \cmark & \cmark & \cmark\footnote{see \cref{prop:basicguarantee}, part \ref{prop:basic13}} &  \xmark\footnote{no more than $rm + (m-r)(n-r)$ nonzeros: see \cref{prop:basicguarantee}, part \ref{prop:basic123}}  &\cmark
   &  \cmark  & guaranteed sparsity ($\leq rm$ nonzeros\footnote{via column block construction}) \\
  \xmark & \xmark & \cmark & \cmark & \cmark
   &  \cmark  &\xmark &  \cmark  & induced sparsity\footnote{via 1-norm or 0-norm pressure} \\
    \cmark & \cmark & \xmark\footnote{see \cref{prop:hard}, \ref{sginv13}} & \xmark\footnote{see \cref{prop:hard}, \ref{sginv123}} & \cmark &  \cmark  &\cmark &  \cmark  & calculate efficiently\\
\end{mpsupertabular}
\end{center}
\end{figure}

We can see some very important advantages of our local search:
 (i) comparing just to LP-based methods,
our local search has (block) structure, while the LP-based methods have
no guaranteed structure; (ii) comparing further to the LP based
on \ref{property1}+\ref{property3}, our local search has a low-rank
guarantee, while the LP method does not. (iii) instead comparing further to the LP based
on \ref{property1}+\ref{property2}+\ref{property3},
our local search has a much better
sparsity guarantee than the LP method.

% Similar comparisons and remarks can be made for
% our local searches for symmetric reflexive generalized inverses (see \S\ref{sec:sym})
% and
% for ha-symmetric reflexive generalized inverses (see \S\ref{sec:HAsym}), but in the interest
% of brevity, we leave those out.

\begin{remark}
    $H$ is a ha-symmetric (reflexive) generalized inverse of $A$ if and only if $H^\top$ is an ah-symmetric (reflexive) generalized inverse of $A^\top$. Following this observation, we can extend all the results in section \S\ref{sec:AHsym} to the ha-symmetric case.
\end{remark}

\section{Numerical experiments}
\label{sec:numres}

Next, we report on some numerical results to illustrate and confirm the applicability of our  proposed approach for constructing generalized inverses. For that, we have selected the ah-symmetric case and implemented a local-search algorithm based on Theorems \ref{thm:ahconstruction} and \ref{thmrwithP3}.  For the purpose of  computations, the parameter $\epsilon$ in Theorem \ref{thmrwithP3} was chosen to be zero.

The algorithm was coded in Matlab R2018a, and
to evaluate its performance, we also solved the linear programs LP:\ref{property1}+\ref{property3} and LP:\ref{property1}+\ref{property2}+\ref{property3} for the smaller instances,
 with Gurobi v.9.0.2. We ran our experiments on a
%computer with an Intel Core i7-7700 CPU 3.60 gigahertz, 32 gigabytes RAM, running under Windows %10 pro.
16-core machine (running Windows Server 2016 Standard):
two Intel Xeon CPU E5-2667 v4 processors
running at 3.20GHz, with 8 cores each, and 128 GB of memory.

The local-search algorithm implemented selects an $m\times r$ rank-$r$ submatrix of a given matrix $A$ and constructs a reflexive ah-symmetric  generalized inverse of $A$, as described in Theorem  \ref{thm:ahconstruction}. Our test matrices were randomly generated with  varied dimensions and ranks. We used the Matlab function \emph{sprand}, which  generates a random  $m\times n$ dimensional matrix $A$ with singular values
given by a nonnegative input vector $rc$.
%  The number of non-zero singular values in  $rc$
% is of course the desired rank $r$. The matrix is generated by \emph{sprand} using
% random plane rotations applied to a diagonal matrix with the given singular values.
We generated dense matrices and selected the $r$ nonzeros of $rc$
as the decreasing vector $M\times(\rho^{1},\rho^{2},\ldots,\rho^{r})$, where $M=2$, and $\rho=(1/M)^{(2/(r+1))}$.
%  The shape of this distribution is concave (as is the case for many matrices that one encounters),
%  and moreover, the entries are not extreme (always between $1/2$ and $2$), and the
%  product is unity, so we can reasonably hope that the numerics may not be terrible.

% For the computations, the parameter $\epsilon$ was chosen to be zero.

Average results for our first experiment are reported in Table \ref{table1}. We solved  LP:\ref{property1}+\ref{property3} and LP:\ref{property1}+\ref{property2}+\ref{property3} for 5 instances of each dimension/rank indicated in the first column of the table, limiting the computational time to solve each instance to 2 hours (i.e., 7200 seconds). Our purpose is to demonstrate how fast the time  to solve these problems increases as we increase the dimension/rank of our test matrices. In the third column of Table \ref{table1}, we give  the number of instances solved to optimality within the time limit. The  average times in the second column, only take into account  the instances solved to optimality. We note that  for $m,n,r=200,100,50$, we could only solve one  instance with each LP model. The results demonstrate that computing ah-symmetric generalized inverses by solving the LP problems does not scale well and is not a practical approach for instances of moderate size, even when the reflexive property \ref{property2} is not imposed.
\begin{table}[!ht]
\centering{
	\begin{tabular}{r|rr|cc}
		\hline
		\multicolumn{1}{c|}{} & \multicolumn{2}{c|}{Time (sec)} & \multicolumn{2}{c}{Instances solved}
		\\
		\multicolumn{1}{c|}{$m, n, r$}                         & LP:\ref{property1}+\ref{property3}           & LP:\ref{property1}+\ref{property2}+\ref{property3}          & LP:\ref{property1}+\ref{property3}             & LP:\ref{property1}+\ref{property2}+\ref{property3}             \\ \hline
		40, 20, 10                                    & 1.76           & 1.98           & 5                & 5                 \\
		80, 40, 20                                    & 41.39          & 40.19          & 5                & 5                 \\
		120, 60, 30                                   & 384.34         & 390.34         & 5                & 5                 \\
		160, 80, 40                                   & 4130.99        & 4248.34        & 4                & 3                 \\
			\multicolumn{1}{c|}{200, 100, 50}                                  & 4197.86        & 4707.34        & 1                & 1                 \\ \hline
	\end{tabular}
	\caption{Computation of minimum 1-norm $H$ with LP models \label{table1}}
}
\end{table}

In Table \ref{table2}, we compare  the optimal solution of LP:\ref{property1}+\ref{property2}+\ref{property3}   to the reflexive ah-symmetric generalized inverse obtained by the local search, showing the 1-norm ($\|H\|_1$) and  sparsity ($\|H\|_0$, computed with tolerance $10^{-6}$). In this experiment we use 30 instances of each dimension/rank indicated in the first column of the table, and report the mean and standard deviation (in parenthesis) of the norms for each group. The results confirm the advantage of the local search over the LP solution in obtaining sparser matrices (via our column block construction), while keeping the magnitude
of the entries reasonably small (via our approximate 1-norm minimization).
\begin{table}[!ht]
	\centering{
	\begin{tabular}{r|rr|rr}
		\hline
		\multicolumn{1}{c|}{} & \multicolumn{2}{c|}{$\|H\|_1$}                                & \multicolumn{2}{c}{$\|H\|_0$}\\ %\textgreater $10^{-6}}$ \\
		\multicolumn{1}{c|}{$m, n, r$}                         & \multicolumn{1}{c}{Local Search} & \multicolumn{1}{c|}{LP:\ref{property1}+\ref{property2}+\ref{property3} } & \multicolumn{1}{c}{Local Search}    & \multicolumn{1}{c}{LP:\ref{property1}+\ref{property2}+\ref{property3} }   \\ \hline
		40, 20, 10                                    & 62.73 (\hspace{0.07in}6.04)                     & 54.54 (\hspace{0.07in}3.91)              & 387.43 (\hspace{0.1in}8.24)                       & 547.73 (\hspace{0.07in}45.58)             \\
		80, 40, 20                                    & 191.22 (24.58)                   & 147.68 (11.58)            & 1547.37 (17.54)                     & 2373.53 (\hspace{0.07in}97.02)            \\
		120, 60, 30                                   & 354.39 (36.88)                   & 263.53 (15.53)            & 3489.70 (38.51)                        & 5434.53 (192.41)           \\ \hline
	\end{tabular}
\caption{Comparison between  local search and LP (Mean(Std Dev)) \label{table2}}
}
\end{table}

In Tables \ref{table3} and \ref{table4} we investigate the performance of the local search.  In the second column of these tables we show the relative decrease on the 1-norm of the reflexive ah-symmetric generalized inverse, comparing the solution $H$ obtained by the local search to the matrix $H^0$ used to initialize the algorithm.  We apply a phase-one local-search algorithm to construct  $H^0$.
In the two last columns of the tables we report the total computational time and number of column swaps performed by the local search. The time to compute the initial matrix $H^0$ and to perform the local search are both included.

In Table \ref{table3}, we consider 30 instances of each dimension/rank, and we present the mean and standard deviation for each group. We note that the local search is effective in reducing the 1-norm of the initial matrix and is much faster than solving LP problems of smaller dimensions, as can be observed from the results in Table \ref{table1}. The average number of column swaps and the standard deviation for the norm decrease demonstrates that the algorithm is very stable, converging to similar solutions after swapping about 60\%   of the columns in the matrix.

\begin{table}[!ht]
		\centering{
	\begin{tabular}{c|c|c|r}
		\hline
		\multicolumn{1}{c|}{$m, n, r$} & \multicolumn{1}{c|}{ $\frac{\|H^0\|_1 - \|H\|_1}{\|H^0\|_1}$} & \multicolumn{1}{c|}{Time (sec)} & \multicolumn{1}{c}{Swaps} \\ \hline
		250, 125, 25                 &0.90 (0.08) & 0.03 (0.01)  & 73.03 (\hspace{0.07in}11.11)          \\
		500, 250, 50                 &0.94 (0.06)   & 0.10 (0.03)  & 159.67 (\hspace{0.07in}20.84)           \\
		1000, 500, 100               &0.91 (0.13)   & 0.84 (0.29) & 293.33 (106.47)          \\ \hline
	\end{tabular}
\caption{Performance of the local search - medium-size instances (30 of each dimension)(Mean(Std Dev)) \label{table3}}
}
\end{table}

In Table \ref{table4}, we consider 5 instances of each dimension/rank, and present average results. Our purpose with this last experiment is to show  the scalability of the local search. The algorithm  is able to construct sparse reflexive ah-symmetric generalized inverses for our test matrices with up to 10000 rows, 1000 columns and rank 100,   in less than 1.1 second on average.
\begin{table}[!ht]
	\centering{
	\begin{tabular}{c|c|c|c}
		\hline
		\multicolumn{1}{c|}{$m, n, r$} & \multicolumn{1}{c|}{ $\frac{\|H^0\|_1 - \|H\|_1}{\|H^0\|_1}$} & \multicolumn{1}{c|}{Time (sec)} & \multicolumn{1}{c}{Swaps} \\ \hline
		5000, 500, 50                & 0.91                             & 0.21                          & 121.8                     \\
		7500, 750, 75                & 0.89                             & 0.65                          & 172.4                    \\
		10000, 1000, 100             & 0.89                             & 1.09                         & 204.8                    \\ \hline
	\end{tabular}
\caption{Performance of the local search - large-size instances (5 of each dimension)(Mean) \label{table4}}
}
\end{table}

\section{Conclusions and open questions} \label{sec:conc}

Generalized inverses have a wide variety of uses in matrix algebra and its applications. Sparsity of a generalized inverse  is highly preferred for efficiency in its use;
structured sparsity and low rank (=reflexivity) are both preferred for explainability.
(Approximate) 1-norm minimization is useful for keeping entries under control and for inducing sparsity.

When the input matrix is symmetric, a symmetric generalized inverse is useful in making matrix algebra more efficient.
Ah-symmetric (resp., ha-symmetric) generalized inverses have the key use in solving least-squares
(resp., minimum-norm) problems. Reflexive generalized inverses have low rank (same as the
input matrix), and this is usually preferred in applications.

We have given local-search algorithms that efficiently produce: (i) symmetric reflexive generalized inverses of symmetric matrices, (ii) reflexive ah-symmetric (ha-symmetric) generalized inverses%,and (iii) reflexive ha-symmetric generalized inverses
. Our algorithms produce generalized inverses
with guaranteed structured sparsity, with low rank (same as the input matrix), and with entries under control (by approximate 1-norm minimization). No other known methods have all of these nice properties.

Of course giving efficient algorithms to improve any of our approximation ratios is a nice challenge. Even for special classes of matrices, this could be interesting. It would be nice to
resolve the complexity of
$\min\{\|H\|_0:~\ref{property1}+\ref{property2},~H^\top=H\}$ and
$\min\{\|H\|_0:~\ref{property1}+\ref{property3}+\ref{property4}\}$.
Finally, with respect to the results in \S\S\ref{sec:r+1}--\ref{sec:1norm_ex},
we would like to understand the behavior of 1-norm based local search  for $r+1<n<2r$.

%Also, it would even be nice to find families of examples to show that the approximation
%factors of our algorithms are best possible.

% Appendix here
% Options are (1) APPENDIX (with or without general title) or
%             (2) APPENDICES (if it has more than one unrelated sections)
% Outcomment the appropriate case if necessary
%
% \begin{APPENDIX}{<Title of the Appendix>}
% \end{APPENDIX}
%
%   or
%
% \begin{APPENDICES}
% \section{<Title of Section A>}
% \section{<Title of Section B>}
% etc
% \end{APPENDICES}

% Acknowledgments here
%\section*{Acknowledgments.}
%M. Fampa was supported in part by CNPq grant 303898/2016-0.
%J. Lee was supported in part by ONR grant N00014-17-1-2296.
% Enter the text of acknowledgments here

% References here (outcomment the appropriate case)

\bibliographystyle{siamplain}
\bibliography{ginv}

\section{Appendix}\label{sec:examples}

In this appendix, first we present several results with regard to the approximation ratio of local-search approximation algorithms based on the determinant (i.e., the algorithms of \cref{thm:symapprox,thmrwithP3}%,thmrwithP4}
, and \cite[Theorem 9]{FampaLee2018ORL}). Although in most numerical tests the achieved approximation ratio is less  than $2$ (see \cite{FLPX}, forthcoming), we demonstrate (in \S\ref{sec:det_ex}) that the approximation ratios for the local searches based on the determinant (as in our theorems) are best possible. We also consider  local-search algorithm based on the actual objective function, i.e., the 1-norm of the inverse. We demonstrate (in \S\ref{sec:r+1}) that for
rank-$r$ $r\times (r+1)$ matrices, the approximation ratio is $\frac{2r}{r+1}<2$, while there is no constant approximation ratio of the local search based on the 1-norm of the inverse, for
$r\times n$ matrices when $n\geq 2r$ (see \S\ref{sec:1norm_ex}).

Finally, we give a family of examples demonstrating that the bound in \cref{prop:basicguarantee} part \ref{prop:basic123} is sharp for $n=r^2$ and $r\ge 2$ (see \S\ref{sec:sb123_ex}), and so in fact there are LP solutions that are much worse than what our column block solution provides
(i.e., $mr+(r^2-r)(m-r)$ nonzeros vs. $mr$ nonzeros).

\subsection{Worst case for local search based on the determinant} \label{sec:det_ex}
We present examples to demonstrate that the approximation ratios for the local search based on the determinant are essentially best possible. We will first give a $r\times r$ nonsingular matrix $\tilde{A}$, then construct a rank-$r$ matrix $A$ that has a local-maximizer $\tilde{A}$ but has another block $B$ with $\|B\|_1$ close to $\|A\|_1$ divided by the approximation ratio.
\begin{example}
    Let $\tilde{A}^{-1}$ be a $r\times r$ Toeplitz matrix, $\delta_L,\delta_U\ge 0$ and small, $ \tilde{A}^{-1}:=$
    $$
   \begin{bmatrix}
    1 & 1+\delta_U & 1+2\delta_U & \ddots & 1+(r-2)\delta_U & 1+(r-1)\delta_U\\
    1+\delta_L & 1 & 1+\delta_U & 1+2\delta_U & \ddots & 1+(r-2)\delta_U\\
    1+2\delta_L & 1+\delta_L & 1 & 1+\delta_U &\ddots & \ddots\\
    \ddots & \ddots & \ddots & \ddots &\ddots & \ddots\\
    1+(r-2)\delta_L & \ddots & \ddots & \ddots &\ddots & 1+\delta_U\\
    1+(r-1)\delta_L & 1+(r-2)\delta_L & \ddots & \ddots &1+\delta_L & 1
    \end{bmatrix}.
    $$
    If $\delta_L=\delta_U$, then $\tilde{A}$ is symmetric. Note that
    $\mathrm{rank}(\tilde{A}^{-1})=r$ when $\delta_L,\delta_U$ are not both $0$. This is because by several subtractions of two rows or two columns, $\tilde{A}^{-1}$ has the same determinant as

    $$
    \begin{bmatrix}
    1 & 0 & 0 & \ddots & 0 & \delta_U\\
    \delta_L & -(\delta_L+\delta_U) & 0 & 0 & \ddots & 0\\
    2\delta_L & 0 & -(\delta_L+\delta_U) & 0 &\ddots & \ddots\\
    \ddots & 0 & 0 & \ddots &\ddots & \ddots\\
    (r-2)\delta_L & \ddots & \ddots & \ddots &\ddots & 0\\
    (r-1)\delta_L & 0 & \ddots & \ddots &0 & -(\delta_L+\delta_U)\\
    \end{bmatrix}
    $$
    which implies $
    \det(\tilde{A}^{-1})= [-(\delta_L+\delta_U)]^{r-1}-(r-1)\delta_L\delta_U[-(\delta_L+\delta_U)]^{r-2}
    $. Now we construct the rank-$r$ $m\times n$ matrices as following:
    \begin{enumerate}[label=(\arabic*)]
        \item For \cite[Theorem 9]{FampaLee2018ORL}, we construct
        $$
        A = \begin{bmatrix}
        \tilde{A} & b & 0\\
        c^\top & d & 0\\
        0 & 0 & 0
        \end{bmatrix};
        $$
        \item For \cref{thm:symapprox}, we choose $\delta_L=\delta_U$ and construct
        $$
        A = \begin{bmatrix}
        \tilde{A} & b & 0\\
        b^\top & d & 0\\
        0 & 0 & 0
        \end{bmatrix};
        $$
        \item For \cref{thmrwithP3}, we construct
        $$
        A = \begin{bmatrix}
        \tilde{A} & b & 0\\
        0 & 0 & 0
        \end{bmatrix},
        $$
    \end{enumerate}
    where $b = \tilde{A}\mathbf{1}$, $c^\top = \mathbf{1}^\top \tilde{A}$, $d=\mathbf{1}^\top \tilde{A}\mathbf{1}$. If $A$ is symmetric, then $c^\top = b^\top$.
    In all cases, $\tilde{A}$ is clearly a local maximizer because the determinant does not change when swap $b$ (resp. $c$) with any column (resp. row) of $\tilde{A}$, and
    $$
    \|\tilde{A}^{-1}\|_1= r^2 + \frac{r^3-r}{6}(\delta_L+\delta_U).
    $$
    Now, we compute the 1-norm of the reflexive generalized inverse, when we swap $b$ with column $1$ of $\tilde{A}$, i.e., compute $\|(\tilde{A}_1(b))^{-1}\|_1$. Let $e_1$ be the unit vector with $1$ in the first entry and $0$ otherwise, and let $\tilde{a}_j$ be the $j$th column of $\tilde{A}$, then $\tilde{A}_1(b)=\tilde{A} + (b-\tilde{a}_1)e_1^\top$. By the Sherman-Morrison formula, we have
    \begin{align*}
    (\tilde{A}_1(b))^{-1}& = \tilde{A}^{-1} - \frac{\tilde{A}^{-1}(b-\tilde{a}_1)e_1^\top \tilde{A}^{-1}}{1+e_1^\top \tilde{A}^{-1}(b-\tilde{a}_1)}
    = \tilde{A}^{-1} - \frac{(\mathbf{1}-e_1)e_1^\top \tilde{A}^{-1}}{1+e_1^\top (\mathbf{1}-e_1)}
    = \tilde{A}^{-1} - (\mathbf{1}-e_1)e_1^\top \tilde{A}^{-1}.
    \end{align*}
    Thus $(\tilde{A}_1(b))^{-1}=$
    $$
    \begin{bmatrix}
    1 & 1+\delta_U & 1+2\delta_U & \dots & 1+(r-2)\delta_U & 1+(r-1)\delta_U\\
    \delta_L & -\delta_U & -\delta_U & -\delta_U & \dots & -\delta_U\\
    2\delta_L & \delta_L-\delta_U & -2\delta_U & -2\delta_U &\dots & \vdots\\
    \ddots & \ddots & \ddots & \dots &\dots & \dots\\
    (r-2)\delta_L & \ddots & \ddots & \ddots &\dots & -(r-2)\delta_U\\
    (r-1)\delta_L & (r-2)\delta_L-\delta_U & \ddots & \ddots &\delta_L-(r-2)\delta_U & -(r-1)\delta_U
    \end{bmatrix}.
    $$
    \begin{enumerate}[label=(\alph*),wide, labelwidth=!, labelindent=0pt]
        \item For \cref{thmrwithP3}, let $\delta_U=0$, then
        $$
        \|(\tilde{A}_1(b))^{-1}\|_1 = r + \frac{r^3-r}{6}{\delta_L}.
        $$
        We have
        $$
        %\lim_{\delta_L\rightarrow 0^+}\frac{\|H\|_1}{\|H_{opt}^{r}\|_1}\ge
        \lim_{\delta_L\rightarrow 0^+}\frac{\|\tilde{A}^{-1}\|_1}{\|(\tilde{A}_1(b))^{-1}\|_1}=r.
        $$
        \item For \cite[Theorem 9]{FampaLee2018ORL}, we then swap row $r+1$ ($[c^\top,d]$) with row $1$ in $\tilde{A}_1(b)$ to obtain $\tilde{A}_1(c,b)$. %Let , we have $\tilde{A}_1(c,b) = \tilde{A}_1(b)+ e_1([d ~c_{2:m}^\top]-[b_1~\tilde{a}_{1,2:m}])$.
        By the Sherman-Morrison formula, we have
            {\small
        \begin{align*}
        (\tilde{A}_1(c,b))^{-1}%& = \tilde{A}_1(b)^{-1} - \frac{\tilde{A}_1(b)^{-1}e_1([d ~c_{2:m}^\top]-[b_1~\tilde{a}_{1,2:m}]) \tilde{A}_1(b)^{-1}}{1+([d ~c_{2:m}^\top]-[b_1~\tilde{a}_{1,2:m}]) \tilde{A}_1(b)^{-1}e_1}\\
        %&= \tilde{A}_1(b)^{-1} - \frac{\tilde{A}_1(b)^{-1}e_1(c^\top-\tilde{a}_{1\cdot}) \tilde{A}^{-1}}{1+(c^\top-\tilde{a}_{1\cdot}) \tilde{A}^{-1}e_1}\\
        %&= \tilde{A}_1(b)^{-1} - \frac{\tilde{A}_1(b)^{-1}e_1(\mathbf{1}^\top-e_1^\top)}{1+(\mathbf{1}^\top-e_1^\top)e_1}\\
        &= \tilde{A}_1(b)^{-1} - \tilde{A}_1(b)^{-1}e_1(\mathbf{1}-e_1)^\top.
        \end{align*}
        }
        Thus $(\tilde{A}_1(c,b))^{-1}=$
    $$
    \begin{bmatrix}
    1 & \delta_U & 2\delta_U & \dots & (r-2)\delta_U & (r-1)\delta_U\\
    \delta_L & -(\delta_L+\delta_U) & -(\delta_L+\delta_U) & \dots & \dots & -(\delta_L+\delta_U)\\
    2\delta_L & -(\delta_L+\delta_U) & -2(\delta_L+\delta_U) & \dots &\dots & -2(\delta_L+\delta_U)\\
    \vdots & \vdots & \vdots & \ddots &\dots & \vdots\\
    (r-2)\delta_L & -(\delta_L+\delta_U) & -2(\delta_L+\delta_U) & \vdots &\ddots & -(r-2)(\delta_L+\delta_U)\\
    (r-1)\delta_L & -(\delta_L+\delta_U) & -2(\delta_L+\delta_U) & \vdots &-(r-2)(\delta_L+\delta_U) & -(r-1)(\delta_L+\delta_U).
    \end{bmatrix}
    $$
        Letting $\delta_U=0$, we have
        $$
        \|(\tilde{A}_1(c,b))^{-1}\|_1 = 1 + \frac{r^3-r}{3}{\delta_L},
        \mbox{ and then }
        %\lim_{\delta_L\rightarrow 0^+}\frac{\|H\|_1}{\|H_{opt}^{r}\|_1}\ge
        \lim_{\delta_L\rightarrow 0^+}\frac{\|\tilde{A}^{-1}\|_1}{\|(\tilde{A}_1(c,b))^{-1}\|_1}=r^2.
        $$
        \item For \cref{thm:symapprox}, we choose $\delta_L=\delta_U$. Similarly, $\tilde{A}_1(b,b)$ is symmetric, and we compute
        $$
        \|(\tilde{A}_1(b,b))^{-1}\|_1 = 1 + \frac{r^3-r}{3}{(\delta_L+\delta_U)},
        \mbox{ and then }
        %\lim_{\delta_L\rightarrow 0^+}\frac{\|H\|_1}{\|H_{opt}^{r}\|_1}\ge
        \lim_{\delta_L\rightarrow 0^+}\frac{\|\tilde{A}^{-1}\|_1}{\|(\tilde{A}_1(b,b))^{-1}\|_1}=r^2.
        $$
    \end{enumerate}
\end{example}
\subsection{Good case for local search based on the 1-norm of the inverse} \label{sec:r+1}
Now we consider the local search based on the 1-norm of the inverse. Here the local search is to find a local minimizer on the 1-norm of the inverse, which is defined similarly as the local-maximizer on the determinant. For example, for the general case, it is defined as
\smallskip
\begin{definition}
Let $A$ be an arbitrary $m\times n$, rank-$r$ matrix, and let $S$ be an ordered subset of $r$ elements from $\{1,\dots,m\}$ and $T$ an ordered subset of $r$ elements from $\{1,\dots,n\}$, and fixed $\epsilon\ge0$, if $\|(A[S,T])^{-1}\|_1$ cannot be decreased by either either swapping an element of $S$ with one from its complement or swapping an element of $T$ with one from its complement, then we say that $A[S,T]$ is a local minimizer for the 1-norm of the inverse on the set of $r\times r$ nonsingular submatrices of $A$.
\end{definition}
We prove an optimal approximation ratio $\frac{2r}{r+1}$ for $r$ by $r+1$ rank-$r$ matrices.

\begin{theorem}\label{thm:bound2}
    For a full row rank matrix $A\in\mathbb{R}^{r\times (r+1)}$, where $r:=\mathrm{rank}(A)$. If $\tilde{A}$ is chosen to minimize the 1-norm of $\tilde{A}^{-1}$ among all nonsingular $r\times r$ principal submatrices, then the $(r+1)\times r$ matrix $H$ constructed over $\tilde{A}$, is an ah-symmetric reflexive generalized inverse of $A$, satisfying $\|H\|_1\le \frac{2r}{r+1}\|H_{opt}\|_1$, where $H_{opt}$ is an optimal solution to $\min\{\|H\|_1:~\ref{property1}+\ref{property2}+\ref{property3}\}=\min\{\|H\|_1:~\ref{property1}\}=\min\{\|H\|_1:~AH=I_r\}$.
\end{theorem}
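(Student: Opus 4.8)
The plan is to set up the LP-duality framework specialized to the case $m=r$, $n=r+1$, and to show that the local-minimality of $\tilde A$ (with respect to the $1$-norm of its inverse) forces a bound of $\frac{2r}{r+1}$ rather than the generic $r$. Since $A$ has full row rank, \ref{property1} is equivalent to $AH=I_r$, and every such $H$ is automatically reflexive and ah-symmetric; moreover $\min\{\|H\|_1:~AH=I_r\}$ is exactly the LP \eqref{eqn:P} (there is no need for the skew-symmetric $U$, since \ref{property3} and \ref{property4} are free here). So the dual we work with is $\max\{\langle A,W\rangle:~\|A^\top W A^\top\|_{\max}\le1\}$, and weak duality says any feasible $W$ gives $\langle A,W\rangle\le\|H_{opt}\|_1$. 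First I would, without loss of generality, take $\tilde A=A[\,:,\,\{1,\dots,r\}\,]$ to be the locally optimal nonsingular block, write $A=[\tilde A\;\;b]$ where $b$ is the last column, and set $\beta:=\tilde A^{-1}b$ so that the column-block construction gives $\|H\|_1=\|\tilde A^{-1}\|_1$.

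Next I would construct the dual-feasible $W$. Following the pattern of \cref{thm:symapprox} and \cref{thmrwithP3}, the natural choice is $W:=\tilde A^{-\top}E\,[\tilde A^{-\top}\;\;0]$ padded appropriately, with $E:=\mathrm{sign}(\tilde A^{-1})$, scaled by a constant $c$ to be determined. Then $\langle A,W\rangle=c\|\tilde A^{-1}\|_1=c\|H\|_1$, and $A^\top W A^\top$ has the block form $c\begin{bmatrix}E & E\beta\\ \beta^\top E & \beta^\top E\beta\end{bmatrix}$. The $\max$-norm of the $(1,1)$ block is $c$; the $(1,2)$ and $(2,1)$ blocks have entries bounded by $c\sum_i|\beta_i|=c\|\beta\|_1$; the $(2,2)$ entry is bounded by $c\|\beta\|_1^2$. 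So the key quantity to control is $\|\beta\|_1=\|\tilde A^{-1}b\|_1$. The crucial claim, which is where the local-search hypothesis enters, is that $\|\beta\|_1\le1$: if some $|\beta_j|>1$, then swapping column $j$ of $\tilde A$ out for $b$ would, by the Sherman--Morrison computation $(\tilde A_j(b))^{-1}=\tilde A^{-1}-\frac{(\beta-e_j)e_j^\top\tilde A^{-1}}{\beta_j}$ (using $e_j^\top\tilde A^{-1}b=\beta_j$), produce a new inverse whose $1$-norm I would show is strictly smaller than $\|\tilde A^{-1}\|_1$, contradicting local minimality. Granting $\|\beta\|_1\le1$, every block of $A^\top WA^\top$ has $\max$-norm at most $c$, so $c=1$ is dual feasible, giving $\|H\|_1\le\|H_{opt}\|_1$ — but that only recovers a factor of $1$ on this path, so a sharper accounting is needed: I expect the true argument refines $\|\beta\|_1$ together with the interaction between the sign pattern $E$ and $\beta$, using that there are only $r+1$ columns, to extract the $\frac{2r}{r+1}$ factor (e.g.\ by a more careful choice of $W$ mixing $\tilde A^{-1}$ with the alternative blocks, or by averaging over the $r+1$ column-block candidates).

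The main obstacle, and the step I would spend the most care on, is precisely pinning down the right dual certificate that yields $\frac{2r}{r+1}$ and not merely $r$ or $1$: the generic bound from \cref{thmrwithP3} is $r(1+\epsilon)$, so the improvement to $\frac{2r}{r+1}$ must exploit that $n-r=1$ very specifically — there is exactly one "extra" column $b$, and the local-optimality of $\tilde A$ among the $\binom{r+1}{r}=r+1$ candidate blocks constrains $\|\tilde A_j(b)^{-1}\|_1\ge\|\tilde A^{-1}\|_1$ for all $j$ simultaneously. I would combine these $r+1$ inequalities (via the explicit Sherman--Morrison formulas for each $(\tilde A_j(b))^{-1}$) with the identity $\sum_j$ (something) to get a lower bound on $\|H_{opt}\|_1$, or equivalently build $W$ as a convex combination of the $r+1$ natural certificates so that its worst block norm drops from $\|\beta\|_1^2\le1$ down to $\frac{r+1}{2r}$. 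Finally I would observe that the example family in \S\ref{sec:r+1} (the Toeplitz $\tilde A^{-1}$ with $\delta_U=0$) shows the ratio $\frac{2r}{r+1}$ is attained in the limit, establishing optimality of the bound.
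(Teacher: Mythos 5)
Your setup of the duality framework is correct, and your final guess --- build the dual certificate as a convex combination of the certificates attached to all the candidate column blocks, using the $r+1$ local-optimality inequalities $\|(\tilde A_j(b))^{-1}\|_1\ge\|\tilde A^{-1}\|_1$ via Sherman--Morrison --- is exactly the route the paper takes. But there is a genuine gap before you get there: the ``crucial claim'' $\|\beta\|_1\le 1$ is false, and the argument you sketch for it does not work. From $\|(\tilde A_j(b))^{-1}\|_1\ge\|\tilde A^{-1}\|_1$ and the Sherman--Morrison identity, what one can extract is only the much weaker inequality $|\beta_j|\le 1+\sum_{k\ne j}|\beta_k|$ for each $j$ with $\beta_j\ne0$; it does not force $|\beta_j|\le1$. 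Indeed, in the tight example $A=[\hat A\;\;\hat A\mathbf{1}]$ with $\hat A=(J+rI)^{-1}$ one has $\beta=\mathbf{1}$, so $\|\beta\|_1=r$, yet $\tilde A=\hat A$ is the minimizing block. You already noticed the symptom --- if $\|\beta\|_1\le1$ held, the single certificate $W$ would be feasible and $H$ would be exactly optimal, contradicting the fact that the ratio $\frac{2r}{r+1}$ is attained --- but you did not resolve it. The paper instead splits into the easy case $\|\beta\|_1\le1$ (where $H$ is indeed optimal) and the case $\|\beta\|_1>1$, where all the work happens.

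In that second case, everything you label ``I expect'' and ``I would'' is precisely the substantive content of the proof, and none of it is carried out: one must (i) compute $\mathrm{sign}((\tilde A_i(b))^{-1})$ explicitly from Sherman--Morrison; (ii) form $W_0=\lambda W+\sum_{i:\,\beta_i\ne0}\lambda_i W_i(b)$, whose objective value is still at least $\|\tilde A^{-1}\|_1$ because each candidate certificate pays at least that much; (iii) bound $\|A^\top W_0A^\top\|_{\max}$, which reduces to bounding both $\|\bar M\|_{\max}$ and $\|\beta^\top\bar M\|_{\max}$ for an explicit mixture $\bar M$ of sign matrices; and (iv) optimize the weights $\lambda,\lambda_i$, which via a Cauchy--Schwarz estimate on $\sum_k|\beta_k|^2$ yields the bound $\frac{2s}{s+1}\le\frac{2r}{r+1}$, where $s$ is the number of nonzero entries of $\beta$. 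Step (iv) is where the factor $\frac{2r}{r+1}$ actually comes from, and it rests on the inequalities $|\beta_j|\le1+\sum_{k\ne j}|\beta_k|$ rather than on $\|\beta\|_1\le1$. (A small additional slip: with $m=r$ and $n=r+1$, the matrix $A^\top WA^\top$ is $(r+1)\times r$, namely $M$ with one extra row $\beta^\top M$; there is no $(1,2)$ or $(2,2)$ block.)
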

\begin{proof}
    Without loss of generality, assume that $A=[\tilde{A}~b]$, $b=Ax$, and $\{i:~x_i\ne 0\}=[s]$. Let $M:=\mathrm{sign}(\tilde{A}^{-1})$, and $W:=\begin{bmatrix}
\tilde{W} & 0
\end{bmatrix}
:=\begin{bmatrix}
\tilde{A}^{-\top}M\tilde{A}^{-\top}& 0
\end{bmatrix}.
$
We have
$$
A^\top W A^\top=\begin{bmatrix}
M\\
b^\top\tilde{A}^{-\top}M
\end{bmatrix}=A^\top \tilde{A}^{-\top}M.
$$
If $\|x\|_1=\sum_{i=1}^r|x_i|\le 1$, then $W$ is dual feasible, thus $H$ is also an optimal solution. We may assume that $\|x\|_1>1$.

Let $M_i(b):=\mathrm{sign}((\tilde{A}_i(b))^{-1})$ for $i\in[s]$, where $\tilde{A}_i(b)$ is $\tilde{A}$ with column $i$ replaced by $b$, and let
$$W_i(b):=\tilde{W}_i(b) D^\top:=(\tilde{A}_i(b))^{-\top} M_i(b)(\tilde{A}_i(b))^{-\top}D^\top,$$
where $D\in\mathbb{R}^{n\times m}$ with $AD = \tilde{A}_i(b)$.
The dual objective value for $W_i(b)$ is
$\langle A,W_i(b)\rangle=\mathrm{trace}(A^\top W_i(b))=\mathrm{trace}((\tilde{A}_i(b))^\top \tilde{W}_i(b))=\langle M_i(b),(\tilde{A}_i(b))^{-1}\rangle=\norm{(\tilde{A}_i(b))^{-1}}_1,$
i.e., $\langle A,W_i(b)\rangle = \norm{(\tilde{A}_i(b))^{-1}}_1\ge \norm{\tilde{A}^{-1}}_1$.
Also, we have
$
A^\top W_i(b) A^\top=A^\top \tilde{A}_i(b)^{-\top} M_i(b).
$
Now consider the dual solution $W_0 = \lambda W + \sum_{i=1}^s\lambda_{i} W_i(b)$ with $\lambda,\lambda_i\ge0$ and $\lambda+\sum_{i=1}^s\lambda_i=1$, which is a convex combination of $W$ and $W_i(b)$. We claim that $\min_{\lambda,\lambda_k}\|A^\top W_0 A^\top\|_{\max}\le\frac{2s}{s+1}\le\frac{2r}{r+1}$, which implies that there exists $W_0$ such that $\frac{r+1}{2r}W_0$ is dual feasible. Clearly, $\langle A,W_0\rangle\ge (\lambda+\sum_{i=1}^s\lambda_{i})\|\tilde{A}^{-1}\|_1=\|\tilde{A}^{-1}\|_1$. Thus $\|\tilde{A}^{-1}\|\le \frac{2r}{r+1}\|H_{opt}\|_1$. It remains to show that
$$\min_{\lambda,\lambda_k\ge0:~\lambda+\sum_{k=1}^s\lambda_k=1}\|A^\top W_0 A^\top\|_{\max}\le\frac{2s}{s+1}.$$
% $$
% A^\top W_0 A^\top = A^\top (\lambda \tilde{A}^{-\top}M + \sum_{i=1}^s\lambda_{i}\tilde{A}_i(b)^{-\top} M_i(b))
% $$
For simplicity, let $N:=\tilde{A}^{-1}$. By the Sherman-Morrison formula, we have
$$(\tilde{A}_i(b))^{-1}=\left(I - \frac{1}{x_{i}}(x-e_i)e_i^\top \right)N.$$
Thus $(\tilde{A}_i(b))^{-1}_{i\ell} =\frac{1}{x_{i}}N_{i\ell}$, and $(\tilde{A}_i(b))^{-1}_{k\ell} =\frac{1}{x_{i}}[x_{i}N_{k\ell}-x_{k}N_{i\ell}]$, $k\ne i$.\\
$[M_i(b)]_{il}=\frac{1}{\mathrm{sign}(x_{i})}M_{il}$, and $[M_i(b)]_{k\ell}=\frac{1}{\mathrm{sign}(x_{i})}\mathrm{sign}(x_{i}N_{k\ell}-x_{k}N_{i\ell})$, $k\ne i$.

\noindent
Because $\|\tilde{A}^{-1}\|_1\le\|(\tilde{A}_i(b))^{-1}\|$, we have
\begin{align*}
 &\|N\|_1\le \frac{1}{|x_i|}\|N_{i\cdot}\|_1+\sum_{k\ne i}\sum_\ell\frac{1}{|x_i|}|x_iN_{k\ell}-x_kN_{i\ell}|\\
 \Rightarrow~&|x_i|\|N\|_1\le \|N_{i\cdot}\|_1+\sum_{k\ne i}\sum_\ell|x_iN_{k\ell}-x_kN_{i\ell}|\\
 &\phantom{|x_i|\|N\|_1}\le \|N_{i\cdot}\|_1+|x_i|\sum_{k\ne i}\|N_{k\cdot}\|_1+\|N_{i\cdot}\|_1\sum_{k\ne i}|x_k|\\
 \Rightarrow~& 0\le (1+\sum_{k\ne i}|x_k|-|x_i|)\|N_{i\cdot}\|_1\Rightarrow 0\le 1+\sum_{k\ne i}|x_k|-|x_i|.
\end{align*}
Now, let $\bar{M} := \lambda M +\sum_{i=1}^s\lambda_{i}(I-\frac{1}{x_{i}}(x-e_i)e_i^\top)^\top M_i(b)$. Then
$$
A^\top W_0 A^\top=A^\top \tilde{A}^{-\top}\bar{M}=\begin{bmatrix}I\\ x^\top\end{bmatrix}\bar{M}=\begin{bmatrix}\bar{M}\\ x^\top\bar{M}\end{bmatrix}.
$$
For $k=[r]\setminus[s]$, we have $[\bar{M}]_{k\ell} = \lambda M_{k\ell} +\sum_{i=1}^s\lambda_{i}[M_i(b)]_{k\ell}$, thus $\|\bar{M}_{k\ell}\|\le 1$. \\
\noindent For $k\in[s]$, we have
\begin{align*}
&[\bar{M}]_{k\ell} = \lambda M_{k\ell} +\sum_{i\ne k:~i\in[s]}\lambda_{i}[M_i(b)]_{k\ell}+\lambda_{k}\left[\frac{[M_k(b)]_{k\ell}}{x_{k}}-\sum_{i\ne k}\frac{x_{i}}{x_{k}}[M_k(b)]_{i\ell}
\right]\\
&\quad= \lambda M_{k\ell} +\sum_{i\ne k:~i\in[s]}\left[\lambda_{i}\frac{1}{\mathrm{sign}(x_{i})}+\lambda_{k}\frac{x_{i}}{|x_{k}|}\right]\mathrm{sign}(x_{i}N_{k\ell}-x_{k}N_{i\ell})+\lambda_{k}\frac{M_{k\ell}}{|x_{k}|}\\
&\quad= \left(\lambda +\frac{\lambda_{k}}{|x_{k}|}\right)M_{k\ell} +\sum_{i\ne k:~i\in[s]}\left[\frac{\lambda_{i}}{|x_{i}|}+\frac{\lambda_{k}}{|x_{k}|}\right]x_{i}\mathrm{sign}(x_{i}N_{k\ell}-x_{k}N_{i\ell}).
\end{align*}
Therefore,
\begin{align*}
\|\bar{M}\|_{\max} &\le \max\left\{1,\max_{k\in[s]}\left\{\lambda +\frac{\lambda_{k}}{|x_{k}|} +\sum_{i\ne k:~i\in[s]}\left[\frac{\lambda_{i}}{|x_{i}|}+\frac{\lambda_{k}}{|x_{k}|}\right]|x_{i}|\right\}\right\}\\
& = \max\left\{1,\max_{k\in[s]}\left\{\lambda +\frac{1+\sum_{i\ne k:~i\in[s]}|x_{i}|}{|x_{k}|}\lambda_{k} +\sum_{i\ne k:~i\in[s]}\lambda_{i}\right\}\right\}.
\end{align*}
Also
\begin{align*}
&\quad(x^\top\bar{M})_{\ell} = \sum_{k=1}^r x_{k}[\bar{M}]_{k\ell}=\sum_{k=1}^s\left(\lambda +\frac{\lambda_{k}}{|x_{k}|}\right)x_{k}M_{k\ell}.
\end{align*}
Therefore
$$
\|x^\top\bar{M}\|_{\max}\le\sum\limits_{k=1}^s|x_{k}|\lambda + \sum_{k=1}^s\lambda_{k}.
$$
% and
% $$
% \|\bar{M}\|_{\max} \le \max_{k}\left\{\lambda +\frac{1+\sum_{i\ne k}|x_{ij}|}{|x_{kj}|}\lambda_{kj} +\sum_{i\ne k}\lambda_{ij}\right\}
% $$
Next, we derive an upper bound for
$$
t:=\min_{\lambda,\lambda_{k}\ge0: \atop \lambda+\sum\limits_{k=1}^s\lambda_{k}=1} \max\left\{\sum\limits_{k=1}^s|x_{k}|\lambda + \sum_{k=1}^s\lambda_{k}, \lambda +\frac{1+\sum_{i\ne k:~i\in[s]}|x_{i}|}{|x_{k}|}\lambda_{k} +\sum_{i\ne k:~i\in[s]}\lambda_{i}\right\}
$$
$$
=\min_{\lambda,\lambda_{k}\ge0: \atop \lambda+\sum\limits_{k=1}^s\lambda_{k}=1} \max\left\{(\sum\limits_{k=1}^s|x_{k}|-1)\lambda +1, \frac{1+\sum_{i\ne k:~i\in[s]}|x_{i}|-|x_{k}|}{|x_{k}|}\lambda_{k} +1\right\}.
$$
Let $y_0 :=(\sum\limits_{k=1}^s|x_{k}|-1)>0$, and  $y_k:=\frac{1+\sum_{i\ne k:~i\in[s]}|x_{i}|-|x_{k}|}{|x_{k}|}\ge0$, $k\in[s]$.\\
\noindent
If $y_k=0$ for some $k\in[s]$, then $\lambda=0$, $\lambda_k=1$, $\lambda_i=0$ for $i\ne k$, is a feasible solution, and thus $t\le1$. If $y_k>0$ for $k\in[s]$, let $\frac{1}{y} := \frac{1}{y_0}+\sum_{k=1}^s\frac{1}{y_k}$. Then $\lambda:=\frac{y}{y_0}$, $\lambda_{y}:=\frac{y}{y_k}$ is a feasible solution, thus $t\le y+1$. Next, we seek an upper bound on $y+1$, which is equivalent to $\min_{x} \frac{1}{y}$. Letting $S :=\sum_{k=1}^s|x_{k}|$,
we have
\begin{align*}
   \frac{1}{y} &= \frac{1}{y_0}+\sum_{k=1}^s\frac{1}{y_k}=\frac{1}{\sum\limits_{k=1}^s|x_{k}|-1} + \sum_{k=1}^s\frac{|x_{k}|^2}{|x_{k}|(1+\sum_{i\ne k}|x_{i}|-|x_{k}|)}\\
   &\ge \frac{(1+S)^2}{S-1 + S(1+S)-2\sum_{k=1}^s|x_{k}|^2} \ge \frac{(1+S)^2}{S-1 + S(1+S)-2\frac{S^2}{s}}\\
   & = \frac{(1+S)^2}{(1-\frac{2}{s})S^2+2S-1} \ge \frac{s+1}{s-1}
\end{align*}
Therefore, $\min_{\lambda,\lambda_k\ge0}\|A^\top W_0 A^\top\|_{\max}\le 1+\frac{s-1}{s+1}=\frac{2s}{s+1}$.
\end{proof}
\begin{remark}
Note that when $A:=[\hat{A} ~~\hat{A}\mathbf{1}]$, where $\hat{A}=(J+rI)^{-1}$, the bound $\frac{2r}{r+1}$ is reached.
\end{remark}
\subsection{Bad case for local search based on the 1-norm of the inverse} \label{sec:1norm_ex}
For simplicity, we only consider full row rank matrix $A$ in this subsection, but the result can be extended to the symmetric case.
\begin{theorem}\label{thm:no_approx}
    There are no constant approximation ratio for local search  based on the 1-norm of the inverse for full row rank matrix $A\in\mathbb{R}^{r\times n}$, where $r\ge 2$ and $n\ge 2r$.
\end{theorem}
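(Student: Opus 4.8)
The plan is to exhibit, for every $r\ge 2$, every $n\ge 2r$, and every $\epsilon>0$, a full-row-rank matrix $A=A_{r,n,\epsilon}\in\mathbb{R}^{r\times n}$ together with a size-$r$ column set whose submatrix $\tilde A$ is a \emph{local minimizer} for the 1-norm of the inverse, yet has $\|\tilde A^{-1}\|_1=r/\epsilon$, while the global optimum $\|H_{opt}\|_1$ stays bounded by a constant depending only on $r$. Since the local search can terminate at such a local minimizer (for instance when initialized there), the achieved approximation ratio is then at least $\Omega(1/\epsilon)\to\infty$, which is exactly the claim of no constant ratio.

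First, I would set up the ``trap'': take the first $r$ columns of $A$ to be $\epsilon e_1,\dots,\epsilon e_r$, so $\tilde A:=A[:,\{1,\dots,r\}]=\epsilon I_r$ and the column block construction (\cref{thm:ahconstruction}) over these columns gives $H$ with $\|H\|_1=\|\tilde A^{-1}\|_1=r/\epsilon$. I then need a clean criterion for $\tilde A$ to be a local minimizer. Writing an arbitrary column $v$ of $A$ as $v=\tilde A x=\epsilon x$ and running the Sherman--Morrison computation exactly as in the proof of \cref{thm:bound2}, one gets $(\tilde A_i(v))^{-1}=(I-\tfrac1{x_i}(x-e_i)e_i^\top)\tilde A^{-1}$; since the rows of $\tilde A^{-1}=\tfrac1\epsilon I_r$ are scalar multiples of the $e_k^\top$, a one-line calculation yields $\|(\tilde A_i(v))^{-1}\|_1=\tfrac{r-1}{\epsilon}+\tfrac{1+\|x\|_1-|x_i|}{|x_i|\epsilon}$. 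Hence swapping column $i\in\{1,\dots,r\}$ with $v$ (when $v_i\ne 0$; if $v_i=0$ the swap gives a singular matrix and is not in the neighborhood) fails to decrease the 1-norm of the inverse precisely when $1+\|x\|_1\ge 2|x_i|$, i.e. when $\|v\|_1+\epsilon\ge 2|v_i|$. So it suffices to choose all remaining columns of $A$ so that no coordinate of a column exceeds half that column's 1-norm.

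Next, I would build the ``good block''. For $r\ge 3$ let the next $r$ columns of $A$ be the columns of $G_0:=J_r-I_r$; for $r=2$ let them be the columns of $G_0:=\begin{bmatrix}1&1\\1&-1\end{bmatrix}$. In both cases every column $v$ of $G_0$ satisfies $2\|v\|_{\max}\le\|v\|_1$, so the criterion above holds for every $\epsilon>0$ and $\tilde A=\epsilon I_r$ is genuinely a local minimizer. Moreover $G_0$ is nonsingular with $G_0^{-1}=\tfrac12 J_r-I_r$ for $r\ge 3$ (so $\|G_0^{-1}\|_1=r^2/2$) and $\|G_0^{-1}\|_1=2$ for $r=2$; the column block construction over this block is feasible since $AH=G_0G_0^{-1}=I_r$, so $\|H_{opt}\|_1\le\|G_0^{-1}\|_1$, a constant depending only on $r$. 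If $n>2r$, pad $A$ with $n-2r$ extra copies of the first column of $G_0$: this preserves full row rank (the matrix still contains $\epsilon I_r$) and, by the same criterion, does not disturb local optimality of $\tilde A$. Consequently the local minimizer $\tilde A$ gives $\|H\|_1/\|H_{opt}\|_1\ge(r/\epsilon)/\|G_0^{-1}\|_1\to\infty$ as $\epsilon\to 0$.

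The one delicate point is that the two requirements pull in opposite directions: the trap must be genuinely locally optimal, yet the global optimum must stay bounded. The resolution is to shrink \emph{only} the trap block by $\epsilon$, which inflates $\|\tilde A^{-1}\|_1$ without touching any other column, together with the observation that the no-improvement criterion $2|v_i|\le\|v\|_1+\epsilon$ is essentially scale-free in those other columns; thus a single fixed ``spread-out'' block such as $J_r-I_r$ simultaneously blocks every single-column swap out of $\tilde A$ and certifies a constant-norm global solution. I expect verifying the inequality chain for the local-optimality criterion (and checking the singular-swap corner cases) to be the only real work; the rest is bookkeeping.
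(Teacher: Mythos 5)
Your construction is correct and is essentially the paper's own proof up to a global rescaling: the paper keeps the trap block fixed at $I_r$ and inflates the good block to $k\,(J_r-2I_r)$ (with a separate $2\times 4$ example for $r=2$), whereas you shrink the trap block to $\epsilon I_r$ and keep the good block fixed, which is the same family of examples after multiplying $A$ by $1/k=\epsilon$, and your Sherman--Morrison swap criterion $2|v_i|\le\|v\|_1+\epsilon$ is just a uniform packaging of the paper's explicit computation of the post-swap inverses. The only (harmless) slip is that $(J_r-I_r)^{-1}=\frac{1}{r-1}J_r-I_r$, so $\|G_0^{-1}\|_1=\frac{r(2r-3)}{r-1}$ rather than $r^2/2$ for $r\ge 4$; since the argument only needs $\|G_0^{-1}\|_1$ to be bounded in terms of $r$ alone, nothing breaks.
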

\begin{proof}
Let $A := \begin{bmatrix}1 & 0 & k & k\\ 0 & 1 & k & -k\end{bmatrix}$. So we have $r=2$.
Consider the block $A(:,[1,2])$, it has $\|A(:,[1,2])^{-1}\|_1=2$. Note that
    \begin{align*}
    &A(:,[1,3])^{-1} = \begin{bmatrix}1 & -1 \\ 0 & \frac{1}{k}\end{bmatrix},
    ~A(:,[1,4])^{-1} = \begin{bmatrix}1 & 1 \\ 0 & -\frac{1}{k}\end{bmatrix},\\
    &A(:,[2,3])^{-1} = \begin{bmatrix}-1 & 1 \\ \frac{1}{k} & 0\end{bmatrix},
    ~A(:,[2,4])^{-1} = \begin{bmatrix}1 & 1 \\ \frac{1}{k} & 0\end{bmatrix},
    \end{align*}
and we have $\|A(:,[1,3])^{-1}\|_1=\|A(:,[1,4])^{-1}\|_1=\|A(:,[2,3])^{-1}\|_1=\|A(:,[2,4])^{-1}\|_1=2+\frac{1}{k}>\|A(:,[1,2])^{-1}\|_1$, which implies that $A(:,[1,2])$ is a local-minimizer of $\|\tilde{A}^{-1}\|_1$. However, $\|(A(:,[3,4]))^{-1}\|_1 = \frac{2}{k}$, thus the approximation ratio is at least $k$. Because $k$ is a parameter which can be sent to infinity, thus there is no constant approximation ratio.

Now, let $\tilde{A}_n(a,b)$ be the $n$ by $n$ matrix with all entries equal to $b$ except the diagonal entries are equal to $a$, i.e. $\tilde{A}_n(a,b):=\mathrm{diag}((a-b)\mathbf{1}) + bJ$. Note that
$$\tilde{A}_n(a,b)\mathbf{1} = [a+(n-1)b]\mathbf{1}, \quad \det(\tilde{A}_n(a,b))=[a+(n-1)b](a-b)^{n-1},$$
and
$$
(\tilde{A}_n(a,b))^{-1} =\tilde{A}_n\left(\frac{-a-(n-2)b}{(b-a)(a+(n-1)b)},\frac{b}{(b-a)(a+(n-1)b)}\right).
$$
For $r\ge 3$. Let $\tilde{A}:=\tilde{A}_r(-1,1)$, and $A = [I_r ~k\tilde{A}]$ ($k>0$). Note that $\rank(\tilde{A})=r$ because $\det(\tilde{A})=(-2)^{r-1}(r-2)\ne 0$. Consider the block $I_r$; it has $\|I_r^{-1}\|_1=r$. If we replace any column of $I_r$ by a column of $k\tilde{A}$, then with some rearrangement of the columns and rows, we obtain
$$
\tilde{B}_1:=\begin{bmatrix}
I_{r-1} & k\mathbf{1} \\
0 & -k
\end{bmatrix}
\quad \text{or} \quad
\tilde{B}_2:=\begin{bmatrix}
I_{r-2} & 0 & k\mathbf{1} \\
0 & 1 & -k\\
0 & 0 & k
\end{bmatrix}.
$$
We have
$$
\tilde{B}_1^{-1} = \begin{bmatrix}
I_{r-1} & \mathbf{1} \\
0 & -\frac{1}{k}
\end{bmatrix}
, \quad \text{and} \quad
\tilde{B}_2^{-1}:=\begin{bmatrix}
I_{r-2} & 0 & -\mathbf{1} \\
0 & 1 & 1\\
0 & 0 & \frac{1}{k}
\end{bmatrix},
$$
thus $\|\tilde{B}_1^{-1}\|_1 = \|\tilde{B}_2^{-1}\|_1 =2(r-1) + \frac{1}{k}>\|I_r^{-1}\|_1$, which implies that $I_r$ is a local-minimizer of the 1-norm of the inverse. However, $\|(k\tilde{A})^{-1}\|_1 = \frac{r}{k}$, thus the approximation ratio is at least $k$. And $k$ is a parameter which can be sent to infinity, thus there is no constant approximation ratio.
\end{proof}

\subsection{Examples related to \texorpdfstring{\cref{prop:basicguarantee} part \ref{prop:basic123}}{Proposition 3.1 part (2)}} \label{sec:sb123_ex}
For $m>r$, $n=r^2$, $r\ge2$, we construct a family of examples that the unique optimal extreme solution of the LP for $\min\{\|H\|_1: P1+P2+P3\}$ have $mr+(r^2-r)(m-r) = r^2 + r^2(m-r)$ nonzeros. We consider the following optimization problem
\begin{equation*}\label{eqn:P123}\tag{\text{$\text{P}_{123}$}}
    \begin{array}{ll}
    \mbox{minimize }& \|H\|_1\\
    \mbox{subject to} & AHA = A\\
    &H(I_m - AA^+) = 0
    \end{array}
\end{equation*}
From $AHA=A$ and $H(I_m-AA^+)$, we could infer $AH=AH(I-AA^+)+AHAA^+ = (AHA)A^+=AA^+$, thus \eqref{eqn:P123} is equivalent to $\min\{\|H\|_1:~P1+P2+P3\}$. And we could derive the dual of \eqref{eqn:P123}:
\begin{equation*}\label{eqn:D123}\tag{\text{$\text{D}_{123}$}}
    \begin{array}{ll}
    \mbox{maximize }& \langle A,W\rangle\\
    \mbox{subject to} & -J\le A^\top WA^\top + V(I_m-AA^+)\le J.
\end{array}
\end{equation*}
Let $X=J_{r\times (m-r)}$, $Y\in\mathbb{R}^{r\times (r^2-r)}$, and the columns of $Y$ consist of all possible vectors $y\in\mathbb{R}^r$ with 2 nonzeros $\frac{m+r}{2m}$ and $\frac{m-r+1}{2m}$. Let $H_0=I_r$, $H_1\in\mathbb{R}^{(r^2-r)\times r}$, and $(H_1)_{ij} = 1$ if $Y_{ji} = \frac{m+r}{2m}$ otherwise $(H_1)_{ij}=0$. Then we have
\begin{align*}
    &Y^\top (\mathrm{sign}(H_0) + \mathrm{sign}(H_0X)X^\top)
    ~=~Y^\top (I_r + J_{r\times(m-r)}X^\top) ~=~ Y^\top (I_r + (m-r)J_r)\\
    &=~\mathrm{sign}(H_1) + D_1 + (m-r)J_{(r^2-r)\times r}
    ~=~\mathrm{sign}(H_1) + D_1 + \mathrm{sign}(H_1X)X^\top,
\end{align*}
where $D_1\in\mathbb{R}^{(r^2-r)\times r}$, $\|D_1\|_{\max}<1$, with
$$
(D_1)_{ij} = \left\{
    \begin{array}{ll}
        0, & Y_{ji}=\frac{m+r}{2m};\\
        \frac{2m-2r+1}{2m}, & Y_{ji}=\frac{m-r+1}{2m};\\
        \frac{m-r}{2m}, & Y_{ji}=0.
    \end{array}
\right.
$$
Let $A_0 = ((H_0+YH_1)(I_r+XX^\top))^{-1}$ and $A = \begin{bmatrix}I_r\\X^\top \end{bmatrix}A_0[I_r ~Y]\in\mathbb{R}^{m\times n}$,
$\rank(A)$ $=\rank(A_0)=r$.  Let $H=\begin{bmatrix}H_0\\ H_1\end{bmatrix}[I_r~X] = \begin{bmatrix}H_0 & J_{r\times (m-r)}\\ H_1 & J_{(r^2-r)\times (m-r)}\end{bmatrix}$,
$$
W = \begin{bmatrix}A_0^{-\top}W_0A_0^{-\top} & 0\\ 0 & 0\end{bmatrix}, ~W_0 = (\mathrm{sign}(H_0) + \mathrm{sign}(H_0X)X^\top)(I_r+XX^\top)^{-1},
$$
and
$$
V =\begin{bmatrix}\mathrm{sign}(H_0) &\mathrm{sign}(H_0X)\\ \mathrm{sign}(H_1)+D_1 & \mathrm{sign}(H_1X)\end{bmatrix} :=  \mathrm{sign}(H)+D~.
$$

These matrices $H,W,V$ satisfy
\begin{align*}
    &AA^+=\begin{bmatrix}I_r\\X^\top \end{bmatrix}(I_r+XX^\top)^{-1}[I_r ~X],\\
    &AHA = \begin{bmatrix}I_r\\X^\top \end{bmatrix}A_0[I_r ~Y]\begin{bmatrix}H_0 \\ H_1\end{bmatrix}[I_r ~X]\begin{bmatrix}I_r\\X^\top \end{bmatrix}A_0[I_r ~Y]\\
        &\quad=\begin{bmatrix}I_r\\X^\top \end{bmatrix}A_0(H_0+YH_1)(I_r+XX^\top)A_0[I_r ~Y]
        =\begin{bmatrix}I_r\\X^\top \end{bmatrix}A_0[I_r ~Y]=A,\\
    &H(I_m-AA^+) = H\begin{bmatrix}X \\ -I_{m-r}\end{bmatrix}(I_{m-r}+X^\top X)^{-1}[X^\top ~-I_{m-r}]\\
        &\quad=\begin{bmatrix}H_0 \\ H_1\end{bmatrix}[I_r ~X]\begin{bmatrix}X \\ -I_{m-r}\end{bmatrix}(I_{m-r}+X^\top X)^{-1}[X^\top ~-I_{m-r}]
        =0,\\
    &A^\top WA^\top + V(I_m-AA^+)=\begin{bmatrix}I_r\\Y^\top \end{bmatrix}W_0[I_r ~X] + V(I_m-AA^+)\\
        &\quad= \begin{bmatrix}I_r\\Y^\top \end{bmatrix}(\mathrm{sign}(H_0)+\mathrm{sign}(H_0X)X^\top)(I_r+XX^\top)^{-1}[I_r ~X]
           + V(I_m-AA^+)\\
        &\quad= \begin{bmatrix}\mathrm{sign}(H_0)+\mathrm{sign}(H_0X)X^\top\\\mathrm{sign}(H_1)+D_1+\mathrm{sign}(H_1X)X^\top \end{bmatrix}(I_r+XX^\top)^{-1}[I_r ~X]
         + V(I_m-AA^+)\\
        &\quad= (\mathrm{sign}(H)+D)AA^+ + V(I_m-AA^+)
        =\mathrm{sign}(H)+D,\\
    &\langle A,W\rangle = \langle AHA,W\rangle=\langle H, A^\top WA^\top\rangle
    =\langle H, A^\top WA^\top\rangle+ \langle H(I_m-AA^+),V\rangle\\
    &\quad=\langle H, A^\top WA^\top + V(I_m-AA^+)\rangle
    =\langle H, \mathrm{sign}(H)+D\rangle
    =\|H\|_1~.
\end{align*}
Therefore by  weak duality, $H$ and $W,V$ are  optimal primal and dual solutions, and $H$ has exactly $r^2+r^2(m-r)$ nonzeros. Also, because $\mathrm{vec}(A^\top WA^\top+V(I_m-AA^+))$ has exactly $r^2+r^2(m-r)$ entries with value $\pm1$ corresponding to the positions where $\mathrm{vec}(H)$ is nonzero, by complementary slackness we have that for any primal optimal solution $H^*$, $\mathrm{vec}(H^*)$ is nonzero only in positions where $\mathrm{vec}(H)$ is nonzero. Then we can easily solve the system of equations $AHA=A, H(I_m-AA^+)=0$ to obtain the unique solution $\mathrm{vec}(H)$. Therefore the primal problem has a unique optimal extreme solution $H$ with $r^2+r^2(m-r)$ nonzeros.
\end{document}